 \newtheorem{theorem}{theorem}[section]
 \newtheorem{corollary}[theorem]{Corollary}
 \newtheorem{lemma}[theorem]{Lemma}
 \theoremstyle{definition}
 \newtheorem{definition}[theorem]{Definition}
 \theoremstyle{remark}
 \newtheorem{remark}[theorem]{Remark}
 \numberwithin{equation}{section}
\title{Homogenization of Non-Local Navier-Stokes-Korteweg Equations for  Compressible Liquid--\-Vapour Flow in Porous Media\thanks{{\textbf{Funding. }}
		 The authors would thank the German Research Foundation (DFG) for funding this work within SFB 1313, Research Project C02.
  }}
\author{Christian Rohde
\thanks{Institute of Applied Analysis and Numerical Simulation,
		University of Stuttgart,
		Pfaffenwaldring 57,
		70569 Stuttgart, 
		Germany
  (\mbox{christian.rohde@mathematik.uni-stuttgart.de}, \mbox{lars.von-wolff@mathematik.uni-stuttgart.de}).
 }
\and Lars von Wolff\, \footnotemark[2]  
}
\newcommand{\NN}{{\mathbb{N}}}         
\newcommand{\ZZ}{{\mathbb{Z}}}       
\newcommand{\RR}{{\mathbb{R}}}
\newcommand{\Ys}{{\mathcal{Y}}}
\newcommand{\diff}{\,d}											
\newcommand{\weakto}{\rightharpoonup}							
\newcommand{\eps}{{\varepsilon}}								
\newcommand{\set}[1]{\left\{#1\right\}}            				
\newcommand{\lnorm}[2]{\left\lVert #1 \right\rVert_{L^{#2}}}	
\newcommand{\norm}[1]{\left\lVert #1 \right\rVert} 				
\newcommand{\dbdot}{\mathbin{:}}
\DeclareMathOperator{\supp}{supp}			 					
\DeclareMathOperator{\dist}{dist}			 					
\let\div\anymacrowhichisnotdefine
\DeclareMathOperator{\div}{div}									
\begin{document}

\maketitle

\begin{abstract}\noindent We consider a nonlocal version of the 
	quasi--static  Navier--Stokes--Korteweg
	equations with a non--monotone pressure law.  This system governs the   low--Reynolds number  dynamics  of a compressible viscous  fluid that may 
	take either a liquid   or a vapour state.  For a porous  domain that 
	is perforated by cavities with diameter proportional to their mutual distance the homogenization limit is analyzed. 
	We extend the results for  compressible  one--phase flow with  polytropic pressure laws  
	 and prove 
	that the effective motion is governed by a nonlocal version of the Cahn--Hilliard equation.
	Crucial for the analysis is the convolution--like structure of the nonlocal capillarity term 
	that allows to equip the system with a  generalized convex free energy. Moreover, the capillarity term 
	accounts not only for the energetic interaction within the fluid but also for the interaction with a solid wall boundary.
\end{abstract}

\providecommand{\keywords}[1]{{\textit{Key words:}} #1\\ \\}
\providecommand{\class}[1]{{\textit{AMS subject classifications:}} #1}

\keywords{Compressible two--phase flow, Navier--Stokes--Korteweg equations,  homogenization, weak solutions}
\class{76M50, 76N99, 76T10}

\section{Introduction}

Deriving effective models for fluid flow through porous domains by homogenization  is an important issue to understand  many 
natural and technical processes.  Incompressible one--phase flow  governed by the Stokes or stationary Navier--Stokes equations has been 
analyzed by Allaire in   \cite{Allaire90a,Allaire90b}. Depending on the pore size/pore distance ratio he proved that the governing 
effective laws do either not change in type or correspond to  Brinkman--  or Darcy--type  laws.  
If the pore size scales as the pore distance  Mikeli\'c  considered the incompressible fully  time--dependent Navier--Stokes system
to derive a Darcy system as effective law \cite{Mikelic91}. These results have then been extended to other scalings in  e.g.~\cite{FeiNamNec16}.
For  the compressible   Navier--Stokes system  with a polytropic pressure law  we refer to  the work of Masmoudi \cite{m} and Feireisl\&Lu \cite{FeireislLu15}. 
In this case the homogenization limit 
gets    us    for    fixed pore size/pore distance ratio to a nonlinear parabolic evolution, i.e., the porous medium equation. We mention also the recent contribution \cite{LuSchwarzacher18} for a small--size pore regime. \\
If the flow system under consideration involves more than one fluid or a fluid in multiple states the possible homogenization scenarios 
feature a wider variety of effective laws, but much less rigorous results are known. For immiscible viscous two--phase flow 
in a thin domain Mikeli\'c\&Paoli identified the Buckley--Leverett equation as the effective  law \cite{MikPao97}.
In this paper we are interested 
in a homogeneous compressible fluid with viscosity that can occur in two states, say a liquid and a vapour one. Up to our knowledge  
no homogenization  results have been derived in this situation. In fact, the choice of the model itself on the pore scale is still
a matter of research and widely debated.  We consider here an instance of the compressible Navier--Stokes--Korteweg (NSK)
equations. To enable the liquid--vapour  phase transformation the constitutive law  to relate pressure to density 
is given  for the NSK systems by a non-monotone Van--der--Waals like function.  As a consequence the first--order part of the system 
is of mixed, hyperbolic-elliptic type.  The different instances of the NSK systems differ in the choice of the  capillarity term.
The standard variant  traces back to \cite{dunnserrin} (see also \cite{Anderson}) and relies on a local differential operator. With the resulting 
third--order capillarity term  and the elliptic--hyperbolic structure of the underlying Euler system     homogenization appears to be complicated. An alternative are lower-order but  nonlocal choices  as in \cite{Neusser15,rohde,rohde2}. These models do not only avoid the
third--order terms but allow the identification of a generalized monotone pressure function   
which is essential for  our method of proof. In passing we note that the generalized monotone pressure function does not reduce to a purely polytropic 
law as considered in \cite{FeireislLu15,m} such that further refinement of the method of proof are required.  The nonlocal capillarity term consists of two parts: one controls the  multiphase interaction within the fluid states 
while theother one governs the  exchange with the solid boundary. Both contributions give rise to an extended  free energy formulation based 
on classical  fluid--fluid and fluid--solid interaction potentials (see  e.g.~\cite{Fischer1998} or \cite{Pismen2001}).

The quasi--static model in the pore space will be introduced in Section  \ref{chapter_Model}, in particular we  outline 
the notion of a generalized pressure and the energetic structure of the system.  In Section \ref{chapter_Prelim} we  specify
the homogenization framework which relies on a fixed pore space/pore distance ratio. Theorem \ref{theorem} in Section \ref{chapter_Result} contains 
our main result. We show that the effective law is given by a nonlocal Cahn--Hilliard equation. Section \ref{chapter_Proof} is then devoted to the proof of 
Theorem \ref{theorem}. The proof relies on a combination of the techniques in \cite{FeireislNovotny,lions,m}. Furthermore 
we carefully exploit the regularizing  benefits  of the generalized pressure function and use properties of  the convolution structure of the 
nonlocal capillarity operator.

\section{A Nonlocal  Model for Two--Phase Flow\label{chapter_Model}}
We consider a diffuse interface model for  a homogeneous compressible fluid that can occur in a liquid and a vapour state.   Phase boundaries should 
be displayed as continuous transitions over a small distance that is controlled by a scaling parameter. Precisely, we focus on a  nonlocal  version  of the Navier--Stokes--Korteweg
(NSK) models from  \cite{rohde, rohde2}. \\ 
Our quasi--static fluid regime  covers  small Reynolds numbers, i.e., viscous forces dominate the  inertial forces. With a  non--dimensionalization as in 
\cite{diaz} we are then led to  the following  form of the NSK model.

For a bounded domain $X \subset \RR^N$, $N\in\set{2,3}$ and a time interval $(0,T)$ the density $\rho: (0,T)\times X \to \RR_{\geq 0}$ and the velocity $u: (0,T)\times X \to \RR^N$ obey the system
\begin{align}
\begin{split}
\label{model_universal}
\omega\partial_t \rho + \div(\rho u) &= 0  ,\\
-\mu \Delta u - \xi \nabla \div(u) + \nabla p(\rho) &= \gamma \rho \nabla \mathcal{D}_X[\rho]
\end{split}   \quad  \text{ in } (0,T)\times X 
\end{align}
and satisfy  for initial density $\rho_0: X \to \RR_{\geq 0}$ the 
initial and boundary conditions
\begin{align}
\label{model_bdry}
\begin{cases}
\rho (0,\cdot) = \rho_{ 0} & \text{in}\;\;X,\\
u = 0 & \text{on}\;\;(0,T) \times \partial X.
\end{cases}
\end{align}
Here $\omega>0$    is a small parameter that will be later put in relation to the homogenization
parameter. For the viscosity coefficients in \eqref{model_universal}  we assume  $\mu>0$ and $\xi\geq 0$. The capillarity 
constant $\gamma$ is assumed to be positive. Before we specify the  capillarity  operator $\mathcal{D}_X$  in \eqref{model_universal} let us discuss  pressure functions 
that allow a    
\begin{figure}[!bp]
 \centering
\begin{tikzpicture}[domain=-1:1.05, samples=100]
\begin{scope}[scale = 4]
\def\alphax{{0.5-0.5*1/sqrt(6)}}
\def\alphaxt{-1/sqrt(6)}
\def\alphay{{0.5+0.5*1/sqrt(6)}}
\def\alphayt{1/sqrt(6)}
\def\hscale{0.8}

\draw[-Stealth] (0,0)--(1.2,0) node[below]{$\rho$};
\draw[-Stealth] (0,0)--(0,1.0);
\draw (0,0) -- (0,-1pt) node[below]{0};
\draw (\alphax,0) -- (\alphax,-1pt) node[below]{$\alpha_1$};
\draw (\alphay,0) -- (\alphay,-1pt) node[below]{$\alpha_2$};

\draw[dotted] (\alphax,0) -- (\alphax, {((\alphaxt)^3-0.5*(\alphaxt) + 0.5)*\hscale});
\draw[dotted] (\alphay,0) -- (\alphay, {((\alphayt)^3-0.5*(\alphayt) + 0.5)*\hscale});
\begin{scope}[xscale = 0.5, yscale = \hscale]
\draw[shift = {(1,0.5)}] plot (\x, {(\x)^3-0.5*(\x)}) node[left]{$p(\rho)$};
\end{scope}
\end{scope}
\end{tikzpicture}\qquad
  \begin{tikzpicture}[domain=0.05:2.4, samples=100]
\begin{scope}[scale = 4]
\def\alphax{{0.5-0.5*1/sqrt(6)}}
\def\alphaxt{-1/sqrt(6)}
\def\alphaxtt{(1-1/sqrt(6))}
\def\alphay{{0.5+0.5*1/sqrt(6)}}
\def\alphayt{1/sqrt(6)}
\def\alphaytt{(1+1/sqrt(6))}
\def\hscale{3}

\draw[-Stealth] (0,0)--(1.2,0) node[below]{$\rho$};
\draw[-Stealth] (0,0)--(0,1.0);
\draw (0,0) -- (0,-1pt) node[below]{0};
\draw (\alphax,0) -- (\alphax,-1pt) node[below]{$\alpha_1$};
\draw (\alphay,0) -- (\alphay,-1pt) node[below]{$\alpha_2$};

\draw[dotted] (\alphax,0) -- (\alphax, {(2*\alphaxtt + 0.5*\alphaxtt^3 - 3*\alphaxtt^2 + 2.5*\alphaxtt*ln(\alphaxtt)+0.6)*\hscale});
\draw[dotted] (\alphay,0) -- (\alphay, {(2*\alphaytt + 0.5*\alphaytt^3 - 3*\alphaytt^2 + 2.5*\alphaytt*ln(\alphaytt)+0.6)*\hscale});
\begin{scope}[xscale = 0.5, yscale = \hscale]
\draw[shift = {(0,0.6)}] plot (\x, {2*\x + 0.5*\x^3 - 3*\x^2 + 2.5*\x*ln(\x)}) node[left]{$W(\rho)$};
\end{scope}
\end{scope}
\end{tikzpicture}
 \caption{Left: Example for a pressure function $p(\rho)$ in a two-phase setting. Right: The corresponding energy function $W(\rho)$}
  \label{Figure_W}
\end{figure}
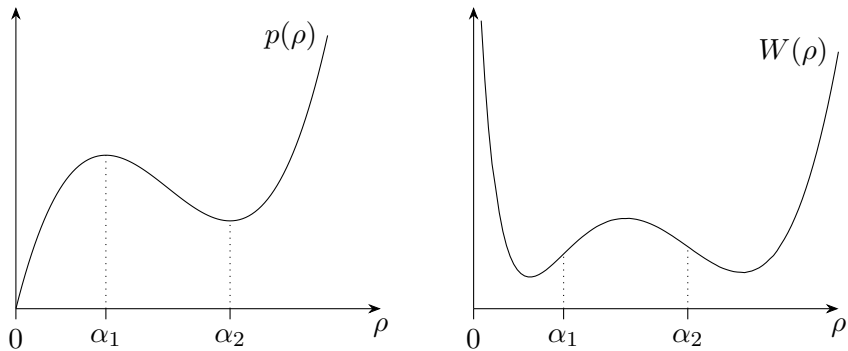
setting with two fluid states. The pressure function $p=p(\rho): [0,\infty) \to [0,\infty)$ is assumed to be  monotone increasing in some interval $[0,\alpha_1]$, monotone decreasing in $(\alpha_1,\alpha_2)$ and again monotone increasing in $[\alpha_2, \infty)$. For $\rho$ in these three intervals we call the {fluid state} {{vapour}}, {{spinodal}} or {{liquid}}, respectively. An illustration of such a pressure function is given in
Figure \ref{Figure_W}. In contrast to a one--phase fluid with monotone increasing pressure the first order flux  in \eqref{model_universal} 
is not purely hyperbolic but  hyperbolic--elliptic. \\ 
We associate an energy function $W: [0,\infty) \to [0,\infty)$ with the pressure $p$ through the condition
\begin{align}
\label{model_W}
p'(\rho) = \rho W''(\rho).
\end{align}
Therefore changes in the monotonicity of $p$ translate to changes in the convexity of $W$. In our two--phase setting this results in a double--well structure, also illustrated in Figure \ref{Figure_W}.\\
By rewriting \eqref{model_universal} we will later work with the \textit{generalized  pressure function} given by 
\begin{align}
\label{model_P}
P(\rho) := p(\rho)+\frac{\gamma}{2} \rho^2.
\end{align}
Our main result (Theorem \ref{theorem}) requires some properties of $P$, and therefore implicitly imposes conditions on $p$ and $\gamma$.
\begin{definition}
	\label{def_pressure}
	A function $P: [0,\infty) \to \RR$ is called an \textbf{admissible generalized pressure function} if $P$ satisfies
	\begin{enumerate}
		\item \label{def_pressure_i1}$P \in C^2([0,\infty))$,
		\item \label{def_pressure_i2}$P(0)=0$,
		\item \label{def_pressure_i3}$P'(r)\geq \alpha$ and $P''(r)\geq \alpha$ for some constant $\alpha > 0$,
		\item \label{def_pressure_i4}$f(r) := r\cdot P^{-1}(r)$ is convex, where $P^{-1}(r)$ denotes the inverse function of $P$. This is equivalent to $P \cdot P''\cdot (P')^{-2} \leq 2$,
		\item \label{def_pressure_i5}Let there be constants $\beta\geq 2$ and $c>0$ such that $\frac{P'(r)}{r^{\beta-1}} \to c$ as $r\to \infty$. Then also $\frac{P(r)}{r^{\beta}} \to c/\beta$.
	\end{enumerate}
\end{definition}
It is easy to see that this allows for a two--phase setting, because  $p$ can be a non--monotone function if $\gamma>0$  is chosen big enough. 
On the other hand Definition \ref{def_pressure} allows us to choose $\gamma=0$ and a monotone $p$ such that $P=p$ is admissible. In this case we include the  single--state setting. Theorem \ref{theorem} will then coincide with the result of  Masmoudi  in \cite{m}, at least  for a polytropic pressure 
law.    

\begin{remark}
	Definition \ref{def_pressure}  accounts also for   Van--der--Waals  pressure laws, that is
	\begin{align*}
	p(\rho) = \frac{RT_\ast \rho}{b-\rho} - a\rho^2 ,
	\end{align*}
	with positive constants $a$, $b$, $R$, $T_\ast$ and the critical Temperature $T_\ast$ small enough, so that the pressure function is non--monotone.
	The only technical difference is that the last condition in Definition \ref{def_pressure} $p$ does not cover $\rho \to \infty$ at finite density, 
	but letting it diverge polynomially for $\rho \to  \infty$.
\end{remark}

Using the admissible generalized pressure instead of the original pressure will become important in our analysis as it leads to a
convex generalized energy function. The convexifying 
quadratic term $\gamma \frac{\rho^2}{2}$ in \eqref{model_P}  will be separated from the capillarity term $\mathcal{D}_X[\rho]$
which we describe in the next step. The operator $\mathcal{D}_X$  is supposed to model capillary forces between different fluid phases as well as between the  fluid and the solid occupying $\RR^N\setminus X$. As mentioned above we prefer among many possible choices 
a nonlocal set--up (see \cite{rohde2}) which requires the following definition of an interaction kernel. 
\begin{definition}
	\label{def_kernel}
	A smooth, compactly supported function  $\phi : \RR^N \to \RR$  is called an \textbf{interaction kernel} if it satisfies  
	\begin{align}
	\label{model_kernel}
	\phi \geq 0,\quad \phi(0)>0,\quad  \phi(x)=\phi(-x), \quad \int_{\RR^N} \phi(x)\diff x = 1.
	\end{align}	
\end{definition}

For an interaction kernel $\phi$ and a constant wall density $\rho_s > 0$ we let the  operator $\mathcal{D}_X$  acting on a density field $\rho( t,\cdot)$  be given by 
\begin{align*}
\mathcal{D}_X[\rho(t,\cdot)](x)   
=\int_{X} \phi(x-y)(\rho(t,y)-\rho(t,x)) \diff y  
 + \int_{\RR^N\setminus X} \phi(x-y)(\rho_s-\rho(t,x)) \diff y .
\end{align*}
Using the notation
\begin{align}
\label{abbrev_convolution}
(\phi \ast_X \rho)(t,x) := \int_{X} \phi(x-y)\rho(t,y) \diff y + \int_{\RR^N\setminus X} \phi(x-y)\rho_s \diff y
\end{align}
we can write the operator $\mathcal{D}_X$ in the compact form
\begin{align}
\label{model_Dnonlocal}
\mathcal{D}_X[\rho(t,\cdot)](x) &=(\phi \ast_X \rho)(t,x)-\rho(t,x).
\end{align}
The model \eqref{model_universal} is now complete. Most notably it obeys the second law of thermodynamics: 
it is easy to see that   classical solutions $(\rho,u)$ of the nonlocal NSK model \eqref{model_universal},  \eqref{model_bdry} with \eqref{model_Dnonlocal} satisfy the energy balance
\begin{align*}
&\frac{d}{dt} \bigg(   \frac{\gamma \omega}{4}  \int_{X}\int_X  \phi(x-y) \left(\rho(t,x)-\rho(t,y)\right)^2 \diff y \diff x   \\
&\qquad \quad + \frac{\gamma \omega}{2}  \int_X\int_{\RR^N\setminus X} \phi(x-y) \left(\rho(t,x)-\rho_s\right)^2  \diff y \diff x
+ \omega \int_X W(\rho(t,x))  \diff x \bigg) \\[2ex]
& \qquad = -\int_{X} \mu  |\nabla u |^2  +  \xi \big(\div(u)\big)^2  \diff x.
\end{align*}
The free energy splits up into three parts consisting of a fluid--fluid interaction energy, a fluid--solid interaction energy and the homogeneous bulk 
energy.  The fluid--solid interaction energy is constructed in the same way as the fluid-fluid interaction energy using a constant wall density outside of 
$X$. Writing the operator $\mathcal{D}_X$ in the form of \eqref{model_Dnonlocal} requires the energies to share the same interaction kernel $\phi$. For the derivation of specific non--local models for  fluid--solid interactions we refer to \cite{Fischer1998, Pismen2001}. 

\begin{remark}
	Note that in the nonlocal  model \eqref{model_universal}, \eqref{model_Dnonlocal}  the solid--fluid interaction is not realized by a contact--line
	boundary condition  as in local two--phase models. Anyhow, a  further boundary condition  would render   the lower--order model  \eqref{model_universal} to be overdetermined. Taking into account the energy balance above    we expect  solutions of \eqref{model_universal}, \eqref{model_Dnonlocal} to be of wetting type, i.e,    to develop a possibly very narrow liquid layer with $\rho$ approaching $ \rho_s$ in the vicinity of a solid wall. A detailed investigation of the fluid states close to the wall can  be found \cite{Pismen2001}.  
\end{remark}

\section{The Porous Domain and Homogenization Scalings\label{chapter_Prelim}}
We summarize first basic  notations for  function spaces that are needed in the sequel. 
\subsection{Notations}
\label{chapter_not}
For matrices $A=(a_{ij}), B=(b_{ij}) \in \RR^{N\times N}$ we write $A:B := \sum_{i,j} a_{ij}b_{ij}$.

For a scalar function $f$ the gradient is denoted by $\nabla f$. For a vector valued function $g$ we write $Dg$ for the Jacobian, $\div g$ for the divergence and $\Delta g$ for the Laplace operator applied component--wise. These operators are only applied to spatial coordinates.

We will denote the set of infinitely differentiable functions on a domain $X$ by $C^\infty(X)$. $C^\infty_C(X)$ consists of all functions in $C^\infty(X)$ with compact support. 

For $r\in[1,\infty]$ we denote the Lebesgue spaces on a domain $X$ by $L^r(X)$. If it is not ambiguous we will denote the space $L^r(X)$ just by $L^r$, e.g. $\norm{f}_{L^r}$ stands for the $L^r$-norm on the domain of the function $f:X\to\RR$. 

$W^{k,r}(X)$ will denote the Sobolev space of order $k\in \NN$ and we write $H^k(X)$ for $W^{k,2}(X)$. $W^{k,r}_0(X)$ is the closure of $C^\infty_C(X)$ in $W^{k,r}(X)$. The dual space of $H^1_0(X)$ will be called $H^{-1}(X)$.

Most of the time we will deal with functions defined on some space--time domain $[0,T]\times X$. For a Banach space $E$ let us denote by $C_T(E)$ the space of continuous functions on $[0,T]$ with codomain $E$ and by $L^r_T(E)$ the Lebesgue space on $[0,T]$ with codomain $E$. We will mainly use the spaces $L^s_T(W^{k,r}(X))$. Note also the isomorphism $L^r_T(L^r(X)) \cong L^r([0,T]\times X)$.

The notation $L^r(X)^n$ is used for vector--valued functions with $n\in\NN$ components, where each component is an element of $L^r(X)$. Similarly we write $L^r(X)^{n\times n}$ for matrix valued functions. In both cases we might shorten the notation to $L^r(X)$.

In Section \ref{chapter_domain} we will introduce the homogenization parameter $\eps >0$. Let us outline that we 
use in the sequel a  constant $C>0$  as a  generic constant that might depend on the data of our problem but not on $\eps$. Furthermore we introduce some weighted spaces.
\begin{definition}
	\label{def_spacesum}
	For $\eps>0$ the \textbf{$\eps$-weighed sum} $E+\eps F$ of two Banach spaces $E$, $F$ with $E \subseteq F$ is given by the space $F$ endowed with the norm
	\begin{align*}
	\norm{f} :&= \inf \set{\norm{f_1}_E + \norm{f_2}_F \mid f = f_1 + \eps f_2, f_1 \in E , f_2 \in F} \\
	&= \inf \set{\norm{f_1}_E + \eps^{-1} \norm{f_2}_F \mid f = f_1 + f_2, f_1 \in E , f_2 \in F}.
	\end{align*}
	The \textbf{$\eps$-weighed intersection} $(\eps E) \cap F$ is given by the space $E$ endowed with the norm
	\begin{align*}
	\norm{f} :&= \eps \norm{f}_E + \norm{f}_F .
	\end{align*}
\end{definition}

In a Banach space $E$ we denote strong convergence of a sequence $\set{f_k}\subset E$ to $f \in E$ by $f_k \to f$, weak convergence by $f_k \weakto f$ and weak-$\ast$ convergence by $f_k \overset{\ast}\weakto f$.


\subsection{The Porous Domain}
\label{chapter_domain}
Let $\Omega$ be a  bounded domain in  $\RR^N$  with smooth boundary for  $N=2$ or $N=3$.  We denote the {unit cell} by $\Ys := (0,1)^N\subset \RR^N$ and want the {solid grain}  domain $\Ys_s$ to be  a closed subset of $\Ys$ with smooth boundary. Denote its  $N$-dimensional Lebesgue measure by $|\Ys_s|>0$. Then the {fluid part} is given by $\Ys_f := \Ys\setminus \Ys_s$, see Figure \ref{Figure_Omega_eps} for an example. We define the {porosity} $\theta := |\Ys_f|$ and deduce $0<\theta<1$.

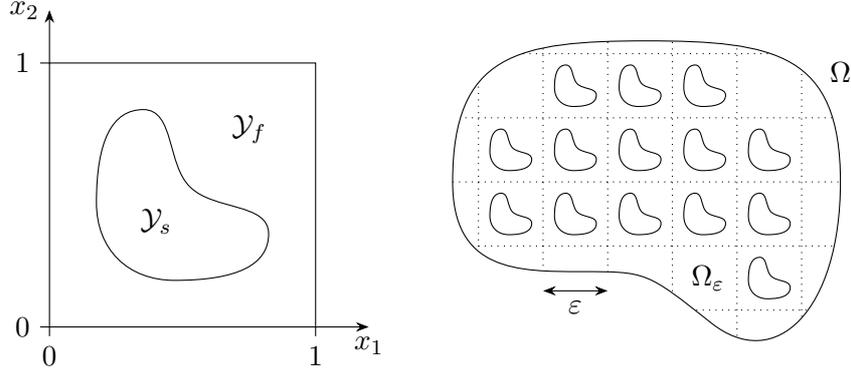
\begin{figure}[!bpt]
\centering
  \begin{tikzpicture}
\begin{scope}[scale = 3.5]

\draw[-Stealth] (1,0)--(1.2,0) node[below]{$x_1$};
\draw[-Stealth] (0,1)--(0,1.2) node[left]{$x_2$};
\draw (0,0) -- (0,-1pt) node[below]{0};
\draw (1,0) -- (1,-1pt) node[below]{1};
\draw (0,0) -- (-1pt,0) node[left]{0};
\draw (0,1) -- (-1pt,1) node[left]{1};

\draw (0,0) rectangle (1,1);

\begin{scope}[scale = 0.058823] 
\draw [shift = {(3,8)}]
	  (0,0) .. controls (0,4) and (1,6) .. (3,6)
			.. controls (5,6) and (4.5,2.4) .. (6,1)
			.. controls (7.5,-0.5) and (11,0) .. (11,-2)
			.. controls (11,-4) and (9,-5) .. (5,-5)
			.. controls (2,-5) and (0,-3) .. cycle;
\end{scope}

\draw (0.4,0.4) node{$\Ys_s$};
\draw (0.75,0.75) node{$\Ys_f$};

\end{scope}
\end{tikzpicture}\qquad
\begin{tikzpicture}
\begin{scope}[scale = 0.85]

\draw (2,1.3)[Stealth-Stealth] -- node[below] {$\eps$} (3,1.3);	
\draw (4.55,1.5) node{$\Omega_\eps$};
\draw (6.6,4.4) node[above] {$\Omega$};

\draw[rotate=180,shift = {(-6.6,-5)}, clip]
	  (0,2) .. controls (0,4) and (1,5) .. (2,4.2)
			.. controls (3,3.4) and (3,3.4) .. (4,3.4)
			.. controls (5,3.4) and (6,3.4) .. (6,2)
			.. controls (6,0) and (5,-0.2) .. (3,-0.2)
			.. controls (1,-0.2) and (0,0) .. cycle;
			
\draw[dotted] (0.5,0.5) grid[step=1] (6.7,5.3);
		
\foreach \position in {(1,2),(1,3),(2,2),(2,3),(2,4),
					   (3,2),(3,3),(3,4),(4,2),(4,3),(4,4),
					   (5,1),(5,2),(5,3)}
{
	\begin{scope}[shift = {\position}]
	\begin{scope}[scale = 0.058823] 
	\draw [shift = {(3,8)}]
		  (0,0) .. controls (0,4) and (1,6) .. (3,6)
				.. controls (5,6) and (4.5,2.4) .. (6,1)
				.. controls (7.5,-0.5) and (11,0) .. (11,-2)
				.. controls (11,-4) and (9,-5) .. (5,-5)
				.. controls (2,-5) and (0,-3) .. cycle;
	\end{scope}
	\end{scope}
}
\end{scope}
\end{tikzpicture}
  
  \caption{Left: Example of the unit cell $\Ys$ with solid part $\Ys_s$ and fluid grain $\Ys_f$ for $N=2$. Right: Construction of $\Omega_\eps$ from the domain $\Omega$ including periodically continued and re-scaled solid grains.}
  \label{Figure_Omega_eps}
\end{figure}

To describe a porous domain we introduce the homogenization parameter $\eps >0$.  We  copy $\Ys_f$ periodically (i.e.,  duplicating $\Ys_f$ shifted by $k$ for each $k\in \ZZ^N$), then rescale by a factor of $\eps$ and intersect with the  domain $\Omega$. For technical reasons we do not remove solid grain that would intersect with $\partial \Omega$. Let us define the set of indices corresponding to cells in the interior of $\Omega$ as
\[
K_\eps := \set{k\in\ZZ^N \mid \eps (\Ys + k) \subset \Omega}.
\]
With this we can define the \textit{porous domain} $\Omega_\eps$ as
\[
\Omega_\eps := \Omega \setminus \bigcup_{k\in K_\eps} \eps(\Ys_s + k).
\]
Note that $\Omega_\eps \subset \Omega$ has a smooth boundary. An illustration of $\Omega_\eps$ is given in Figure \ref{Figure_Omega_eps}.
We observe in particular that for this construction the ratio  between the pores' diameters (scaled fluid parts) and  the distance between scaled fluid parts remains constant with respect to variation of $\eps$.


We sometimes need to work on all cells in the interior of $\Omega_\eps$. For this let
\begin{align*}
\Omega_K :=  \left(\bigcup_{k\in K_\eps} \eps(\overline{\Ys} + k)\right)^o  \quad \text{and} \quad \Omega_{K,\eps} :=  \left(\bigcup_{k\in K_\eps} \eps(\overline{\Ys_f} + k)\right)^o = \Omega_K \cap \Omega_\eps,
\end{align*}
where we denoted the closure by $\overline{\,\cdot\,}$ and the open interior by $\cdot ^o$ Lastly we will need a set with smooth boundary surrounding $\Ys_s$. For this let us first fix a smooth open region $\Ys_r$ with $\Ys_s \subset \Ys_r \subset \Ys$. We set $\Ys_{r\setminus s} := \Ys_r\setminus \Ys_s$.

\subsection{Basic Definitions for Homogenization}
\label{chapter_def}
We want to extend functions defined on $\Omega_\eps$ to the whole of $\Omega$. This will in particular enable us to compare  for $\eps_1,\eps_2 >0$ functions defined on different sets $\Omega_{\eps_1}$ and $\Omega_{\eps_2}$. Let us define two extension operators:
\begin{definition}[Extension operators]
	\label{def_extension}
	For a function $\phi \in L^1(\Omega_\eps)$ we define the \textbf{zero extension} $\tilde \phi \in L^1(\Omega)$ by
	\[
	\tilde \phi = \begin{cases}
	\phi & \text{in}\;\; \Omega_\eps , \\
	0 & \text{in}\;\; \Omega\setminus\Omega_\eps ,
	\end{cases}
	\]
	and the \textbf{mean value extension} $\hat \phi \in L^1(\Omega)$ by
	\[
	\hat \phi = \begin{cases}
	\phi & \text{in}\;\; \Omega_\eps ,\\
	\dfrac{1}{\eps^N |\Ys_{r\setminus s}|} \displaystyle\int_{\eps(\Ys_{r\setminus s}+k)} \phi(x) \diff x & \text{in}\;\; \eps(\Ys_s+k),\;\; k\in K_\eps .
	\end{cases}
	\]
\end{definition}
\noindent
A relation between weak limits of mean value extensions and zero extensions is given by Lemma 1.3 from \cite{m}:
\begin{lemma}
	\label{lemma_extension}
	For  $\eps>0$  let $g_\eps \in L^1(\Omega_\eps)$ and $g\in L^1(\Omega) $. Then, the following two assertions are equivalent in the  
	limit $\eps \to 0$.
	\begin{enumerate}
		\item $\hat g_\eps \weakto g \quad \text{in}\;\; L^1(\Omega)$,
		\item $\tilde g_\eps \weakto \theta g \quad \text{in}\;\; L^1(\Omega)$.
	\end{enumerate}
\end{lemma}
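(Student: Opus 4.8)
The statement is Lemma 1.3 from Masmoudi's paper, so the strategy is to adapt that argument to the present geometry. The key observation is that the two extensions $\tilde g_\eps$ and $\hat g_\eps$ differ only on the solid grains $\eps(\Ys_s+k)$, $k\in K_\eps$, and on each such grain $\hat g_\eps$ is constant, equal to the average of $g_\eps$ over the surrounding shell $\eps(\Ys_{r\setminus s}+k)$. I would therefore write, for any test function $\psi\in C^\infty_C(\Omega)$ (which suffices by density since all the sequences are bounded in $L^1$ once we know either assertion; in fact weak $L^1$ convergence is usually accompanied here by an $L^r$ bound with $r>1$, so equi-integrability and the usual duality are available),
\begin{align*}
\int_\Omega \hat g_\eps\,\psi\diff x - \int_\Omega \tilde g_\eps\,\psi\diff x
&= \sum_{k\in K_\eps}\left( \frac{1}{\eps^N|\Ys_{r\setminus s}|}\int_{\eps(\Ys_{r\setminus s}+k)} g_\eps\diff x\right)\int_{\eps(\Ys_s+k)}\psi\diff x .
\end{align*}

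\textbf{Main step.} The plan is to show that the right-hand side converges to $(1-\theta)\int_\Omega g\,\psi\diff x$ under assertion (1), and symmetrically. On the cell $\eps(\Ys+k)$ the quantity $\frac{1}{\eps^N}\int_{\eps(\Ys_s+k)}\psi\diff x$ is, up to an error controlled by the modulus of continuity of $\psi$ times $\eps$, equal to $|\Ys_s|\,\psi(x_k)$ where $x_k$ is the cell centre; likewise $\int_{\eps(\Ys_s+k)}\psi\diff x$ and $\int_{\eps(\Ys_f+k)}\psi\diff x$ and $\int_{\eps(\Ys+k)}\psi\diff x$ are all comparable to $\eps^N\psi(x_k)$ times $|\Ys_s|$, $\theta$, $1$ respectively. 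Hence the difference above is, to leading order,
\begin{align*}
\sum_{k\in K_\eps} \frac{|\Ys_s|}{|\Ys_{r\setminus s}|}\left(\int_{\eps(\Ys_{r\setminus s}+k)} g_\eps\diff x\right)\psi(x_k) + o(1).
\end{align*}
Since $\Ys_{r\setminus s}\subset\Ys_f$, integration of $g_\eps$ over $\eps(\Ys_{r\setminus s}+k)$ is integration of $\tilde g_\eps$ over that set; replacing $\psi(x_k)$ by $\psi$ inside the integral (again up to $O(\eps)$) and using $|\Ys_{r\setminus s}| = |\Ys_r|-|\Ys_s|$ together with the cell decomposition, this sum is a Riemann-type approximation to $\int_\Omega \tilde g_\eps\,\psi\diff x$ weighted by $\tfrac{|\Ys_s|}{|\Ys_{r\setminus s}|}$. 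But there is a subtlety: $\Ys_{r\setminus s}$ was \emph{arbitrary} subject to $\Ys_s\subset\Ys_r\subset\Ys$, so to make the bookkeeping clean I would instead argue directly. Writing $\hat g_\eps = \tilde g_\eps + (\hat g_\eps - \tilde g_\eps)$ and noting that $\hat g_\eps-\tilde g_\eps$ is supported on the grains and has, on each cell, the same integral as $g_\eps$ has on $\eps(\Ys_{r\setminus s}+k)$ scaled by $|\Ys_s|/|\Ys_{r\setminus s}|$, one gets that $\hat g_\eps-\tilde g_\eps$ and $\tfrac{|\Ys_s|}{|\Ys_{r\setminus s}|}$ times a ``grain-supported copy'' of $\tilde g_\eps$ have the same weak limit. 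Combined with the elementary fact that a sequence supported on the grains whose cell-integrals match those of $\tilde g_\eps\mathbf{1}_{\eps(\Ys_{r\setminus s}+k)}$ converges weakly to $\tfrac{|\Ys_s|}{1}\cdot\tfrac{1}{|\Ys_{r\setminus s}|}\cdot|\Ys_{r\setminus s}|\cdot g = |\Ys_s|\,g$ — here one uses that $\tilde g_\eps\weakto\theta g$ is assumed, and that averaging over $\eps(\Ys_{r\setminus s}+k)$ then spreading uniformly over $\eps(\Ys_s+k)$ preserves weak limits at scale $\eps\to 0$ — one concludes $\hat g_\eps - \tilde g_\eps \weakto |\Ys_s| g = (1-\theta)g$, hence $\hat g_\eps\weakto \theta g + (1-\theta)g = g$. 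The converse direction (1)$\Leftarrow$(2) is the same computation read backwards, using $\tilde g_\eps = \hat g_\eps - (\hat g_\eps-\tilde g_\eps)$ and linearity of the weak limit.

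\textbf{Expected obstacle.} The only genuinely delicate point is justifying that averaging $g_\eps$ over the shell $\eps(\Ys_{r\setminus s}+k)$ and then redistributing that mass uniformly over $\eps(\Ys_s+k)$ does not change the weak limit as $\eps\to 0$. This is a standard ``oscillating test function / two-scale'' fact: against a fixed smooth $\psi$, the redistribution error per cell is $O(\eps)\cdot\|\nabla\psi\|_\infty\cdot\|g_\eps\|_{L^1(\eps(\Ys+k))}$, and summing over $k\in K_\eps$ gives $O(\eps)\|\nabla\psi\|_\infty\|g_\eps\|_{L^1(\Omega)}=O(\eps)$, which vanishes — provided $\{g_\eps\}$ (equivalently $\{\tilde g_\eps\}$) is bounded in $L^1(\Omega)$, which follows from weak convergence in $L^1$ together with the equi-integrability that is implicit in the way this lemma is used (the density sequences come with uniform $L^r$, $r>1$, bounds). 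I would spell this out as a short cell-by-cell estimate and then pass to the limit; everything else is linear algebra with the weights $\theta$, $|\Ys_s| = 1-\theta$, and $|\Ys_{r\setminus s}|$, whose dependence on the auxiliary region $\Ys_r$ cancels in the end. Since the result is quoted verbatim from \cite{m}, it is also legitimate simply to cite it; the sketch above indicates why it remains valid for the present $\Omega_\eps$, where grains meeting $\partial\Omega$ are kept and only cells with $k\in K_\eps$ carry the modified extension.
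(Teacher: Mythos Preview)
The paper does not give its own proof of this lemma: it simply states the result and attributes it to Lemma~1.3 of Masmoudi~\cite{m}. You correctly identify this at the outset and again at the end of your proposal, and your sketch of the cell-by-cell argument against smooth test functions is along the right lines (difference of the two extensions is supported on the grains, replace $\psi$ by its cell value up to an $O(\eps)$ error, sum and use the $L^1$ bound). Since the paper's ``proof'' is a bare citation, your write-up already goes further than the paper does; if anything, the one place you flag as delicate---showing that averaging over $\eps(\Ys_{r\setminus s}+k)$ and redistributing over $\eps(\Ys_s+k)$ does not alter the weak limit---is precisely where a full proof would need the most care, and your $O(\eps)\|\nabla\psi\|_\infty\|g_\eps\|_{L^1}$ estimate is the right mechanism, but a clean version would also need to explain why the weak limit of $\tilde g_\eps$ restricted to the shells is $|\Ys_{r\setminus s}|\, g$ (i.e.\ why the weight $|\Ys_{r\setminus s}|$ appears and then cancels with the prefactor $|\Ys_s|/|\Ys_{r\setminus s}|$ to leave $1-\theta$). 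For the purposes of matching the paper, a one-line citation to~\cite{m} is all that is required.
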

\noindent
Furthermore, we refer to Lemma \ref{lemmma_restriction} for the construction of a restriction operator dual to the mean value extension.


We need  to define a permeability matrix, see \cite{m, tartar}.  Wellposedness and  regularity results from e.g.~Galdi \cite{galdi}) guarentee that 
we find for $1\leq i \leq N$ unique solutions $v_i \in C^\infty_{per}(\Ys_f)^N$ and $q_i \in C^\infty_{per}(\Ys_f)/\RR$ to the Stokes problem
\begin{align} 
\label{eqn_pdevi}
\begin{cases}
-\Delta v_i + \nabla q_i = e_i & \text{in}\;\; \Ys_f ,\\
\div v_i = 0 & \text{in}\;\; \Ys_f ,\\
v_i = 0  &\text{on}\;\; \partial\Ys_s .
\end{cases}
\end{align}
Here the space $C^\infty_{per}(\Ys_f)$ consists of functions $f\in C^\infty(\Ys_f)$, such that the periodic continuation 
\begin{align*}
g:\bigcup_{k\in \ZZ^N} (\overline{\Ys_f} + k) \to \RR,\quad  g(x-k) = f(x) \quad \text{for} \quad x\in\overline{\Ys_f}, k\in K_\eps
\end{align*}
is infinitely often differentiable. 

Let $\tilde v_i$ be the zero extension of $v_i$ to $\Ys$. Let $A(x)$, $x\in\Ys$ be the matrix with columns $\tilde v_i(x)$, $i=1,\ldots,N$. Then the \textit{permeability matrix} $\bar A \in \RR^{N\times N}$ is defined as the average of $A$:
\begin{align}
\label{eqn_barA}
\bar A := \int_\Ys A(x) \diff x.
\end{align}
Finally we 
define functions 
\begin{align*}
v^\eps_i\in W^{1,\infty}(\Omega)^N\quad \text{and} \quad q^\eps_i\in W^{1,\infty}(\Omega_{K,\eps})
\end{align*}
by rescaling and periodic continuation:
\begin{align}
\begin{split}
\label{definition_vq}
v^\eps_i(\eps (x-k)) = \tilde v(x) \quad &\text{for}\quad x\in\Ys, k \in \ZZ^N , \\
q^\eps_i(\eps (x-k)) = q(x) \quad &\text{for}\quad x\in\Ys_f, k\in K_\eps .
\end{split}
\end{align}
We get from the rescaling directly  the  uniform estimates 
\begin{align}
\label{bounds_vi}
\norm{v^\eps_i}_{L^\infty(\Omega)} + \eps \norm{D v^\eps_i}_{L^\infty(\Omega)}\leq C ,\\
\label{bounds_qi}
\norm{q^\eps_i}_{L^\infty(\Omega_{K,\eps})} + \eps \norm{\nabla q^\eps_i}_{L^\infty(\Omega_{K,\eps})}\leq C.
\end{align}
\section{The Main Result\label{chapter_Result}}
In this section we state our main result on the homogenization limit  on a sequence of porous domains $\Omega_\eps$ for $\eps \to 0$. 
For the ease of notation we  write for the convolution defined in \eqref{abbrev_convolution} 
\[
\phi \ast_\eps \rho := \phi \ast_{\Omega_\eps} \rho \mbox{ and } \phi \ast_0 \rho := \phi \ast_\Omega \rho.
\] 
In the same spirit  we abbreviate $\mathcal{D}_\eps := \mathcal{D}_{\Omega_\eps} $  for the capillarity operator. 
For the proof of  the main theorem an important tool is the  statement of  Lemma \ref{lemma_conv1} on the convergence 
of a sequence of convolutions on $\Omega_\eps$.

Let us  now rewrite the nonlocal NSK model \eqref{model_universal} on the domain $X = \Omega_\eps$ with
the operator $\mathcal{D}_{\eps}$ and the choice $\omega = \eps^2$. We search for a 
density $\rho_\eps: (0,T)\times \Omega_\eps \to \RR_{\geq 0}$ and a velocity $u_\eps: (0,T)\times \Omega_\eps \to \RR^N$ that 
obey the system
\begin{align}
\begin{split}
\label{model_universaleps}
\eps^2\partial_t \rho_\eps + \div(\rho_\eps u_\eps) &= 0  ,\\
-\mu \Delta u_\eps - \xi \nabla \div(u_\eps) + \nabla p(\rho_\eps) &= \gamma \rho \nabla \mathcal{D}_\eps[\rho_\eps]
\end{split}  \quad \text{ in } (0,T)\times \Omega_\eps
\end{align}
and satisfy  for initial density $\rho_{0,\eps}: \Omega_\eps \to \RR_{\geq 0}$ the 
initial and boundary conditions
\begin{align}
\label{model_bdryeps}
\begin{cases}
\rho_\eps (t=0) = \rho_{ 0,\eps} & \text{in}\;\;\Omega_\eps,\\
u_\eps = 0 & \text{on}\;\;(0,T) \times \partial \Omega_\eps.
\end{cases}
\end{align}
The initial density is supposed to  satisfy  $W(\rho_{0,\eps}) \in L^1(\Omega_\eps)$ with a uniform bound on $\set{\norm{W(\rho_{0,\eps})}_{L^1(\Omega_\eps)}}_{\eps>0}$, with the energy function $W$ given by \eqref{model_W}. 
Additionally, we require a uniform bound on $\set{\norm{\rho_{0,\eps}}_{L^2(\Omega_\eps)}}_{\eps>0}$.
Furthermore  let the sequence $\set{\hat{\rho}_{\eps 0}}_{\eps > 0}$ be weakly convergent in $L^1(\Omega)$ with the weak limit denoted by $\rho_0 \in L^1(\Omega)$.

For  the nonlocal NSK model \eqref{model_universaleps}, \eqref{model_bdryeps} we require a weak formulation in the following sense.
\begin{definition}[Weak solution to \eqref{model_universaleps}, \eqref{model_bdryeps}]
	Some functions
	\begin{align*} 
	\rho_\eps\in C_T(L^2(\Omega_\eps)) \quad \text{and}\quad u_\eps\in L^2_T(H^1_0(\Omega_\eps))^N
	\end{align*} 
	with $P(\rho_\eps)\in L^2_T(L^2(\Omega_\eps))$ are called a \textbf{weak solution} to the model \eqref{model_universal}, \eqref{model_bdry} if $\rho_\eps \geq 0$ a.e. and if for any $ f\in C^\infty_C([0,\infty))$
	\begin{align}
	\begin{split}
	\label{weak_conti}
	&\int_0^T \int_{\Omega_\eps} \eps^2 f(\rho_\eps) \partial_t \psi +f(\rho_\eps) u_\eps \nabla \psi  - (\div u_\eps)[f'(\rho_\eps)\rho_\eps - f(\rho_\eps)]\psi \diff x \diff t \\
	&\qquad= -\int_{\Omega_\eps} \eps^2 f(\rho_{0,\eps}) \psi(t=0) \diff x,
	\end{split}\\
	\begin{split}
	\label{weak_mom}
	&\int_0^T \int_{\Omega_\eps} \mu D u_\eps \dbdot D v +\xi \div u_\eps \div v - p(\rho_\eps) \div v \diff x \diff t \\
	&\qquad = \int_0^T \int_{\Omega_\eps} \gamma \rho_\eps \nabla\left(\phi \ast_\eps \rho_\eps\right) v + \frac{\gamma}{2} \rho_\eps^2 \div v \diff x \diff t
	\end{split}
	\end{align}
	hold for all test functions $\psi \in C^\infty_C((-\infty,T) \times \Omega_\eps)$, $v \in L^2_T(H^1_0(\Omega_\eps))^N$.
\end{definition}
\begin{remark}
	For an admissible generalized pressure function $P$ the condition $P(\rho_\eps)\in L^2_T(L^2(\Omega_\eps))$ implies $\rho_\eps\in L^2_T(L^4(\Omega_\eps))$, and with this all integrals in the weak formulation are finite.
\end{remark}

\begin{remark}
	This weak formulation is used in the semi--stationary model described by Lions \cite{lions} in Chapter 8.2, see in particular Theorem 8.6. In view of Remark 8.14 and Chapter 7.5  in \cite{lions}  one can generalize the  existence result for this semi--stationary model to the case with an admissible generalized pressure function $P$. In fact, this approach has been used in \cite{Haspot} to derive a global existence theorem for weak 
	solutions of the nonlocal NSK equations.
\end{remark}

We will prove that in the limit $\eps \to 0$ the  evolution of the limit density $\rho: [0,T]\times \Omega \to \RR_{\ge 0} $   will be 
governed by the  nonlocal Cahn--Hilliard problem      
\begin{align}
\label{Result_CH}
\theta\partial_t\rho + \frac{1}{\mu} \div\left[\rho \bar A \left(\gamma \theta \rho \nabla(\phi\ast_0\rho - \rho) - \nabla \left(p(\rho)+\frac{\gamma (1-\theta)}{2}\rho^2 \right)\right)\right] = 0 
\end{align}
in $(0,T)\times\Omega$, with initial and boundary conditions
\begin{align}
\label{Result_CH_IVBP}
\begin{cases}
\rho(t=0) = \rho_0 &\text{in}\;\;\Omega ,\\
\rho \bar A \left(\gamma \theta \rho \nabla(\phi\ast_0\rho - \rho) - \nabla \left(p(\rho)+\frac{\gamma (1-\theta)}{2}\rho^2 \right)\right)\cdot n = 0 &\text{on}\;\; (0,T)\times\partial\Omega .
\end{cases}
\end{align}
Here $n\in \RR^N$ denotes the normal vector on $\partial \Omega$. We recall that $\theta$ is the porosity and $\bar A\in \RR^{N\times N}$ is the permeability matrix of the porous medium (see Section \ref{chapter_def} for definitions). 
Nonlocal Cahn--Hilliard problems have been introduced in  \cite{Giacomin1998} as models for phase separation dynamics. Another asymptotic
regime for Korteweg fluids  that is governed by the  Cahn-Hilliard equation can be found in \cite{Tzavaras17}.

\begin{definition}[Weak solution to \eqref{Result_CH}, \eqref{Result_CH_IVBP}]
	A function $\rho \in L^2_T(H^1(\Omega))$ with
	\begin{align*}
	\bar A \left(\gamma \theta \rho \nabla(\phi\ast_0\rho - \rho) - \nabla \left(p(\rho)+\frac{\gamma (1-\theta)}{2}\rho^2 \right)\right) \in L^2_T(L^2(\Omega))
	\end{align*}
	is called a \textbf{weak solution} to \eqref{Result_CH}, \eqref{Result_CH_IVBP} if $\rho\geq 0$ a.e. and 
	\begin{align*}
	0&=\int_0^T \int_\Omega \theta \rho \partial_t \psi \diff x \diff t +\int_\Omega \theta \rho_{0} \psi(t=0) \diff x\\
	&\qquad + \frac{1}{\mu}\int_0^T \int_\Omega \rho \bar A \left(\gamma \theta \rho \nabla(\phi\ast_0\rho - \rho) - \nabla \left(p(\rho)+\frac{\gamma (1-\theta)}{2}\rho^2 \right)\right) \nabla \psi \diff x \diff t
	\end{align*}
	hold for all test functions $\psi \in C^\infty_C((-\infty,T)\times \RR^N)$.
\end{definition}
\noindent
Now we are ready to state our main theorem.
\begin{theorem}
	\label{theorem}
	Consider \eqref{model_universaleps}, \eqref{model_bdryeps} with an admissible generalized  pressure function $P(\rho) = p(\rho)+\frac{\gamma}{2} \rho^2$, with operator $\mathcal{D}_\eps$ and an interaction kernel $\phi$ satisfying \eqref{model_kernel}. For a sequence $\eps\to 0$ let $\rho_\eps\in C_T(L^2)$, $u_\eps\in L^2_T(H^1_0)^N$  be weak solutions to \eqref{model_universaleps}, \eqref{model_bdryeps}.
	
	Then there exist functions $u \in L^2_T(L^2(\Omega))^N$, $\rho \in L^2_T(H^1(\Omega))$ such that we have for a subsequence of the extensions 
	$\set{\tilde u_\eps}_{\eps > 0}$, $\set{\hat \rho_\eps}_{\eps > 0}$
	\begin{align}
	\frac{\tilde u_\eps}{\eps^2} \weakto u \quad  &\text{in}\quad L^2_T(L^2)^N ,\\
	\quad \hat \rho_\eps \to \rho \quad  &\text{in}\quad  L^2_T(L^2) .
	\end{align}
	Furthermore $\rho$ weakly solves the nonlocal Cahn--Hilliard problem \eqref{Result_CH}, \eqref{Result_CH_IVBP}. 
	
	We have on $\set{(t,x)\in (0,T)\times \Omega\mid \rho(t,x) >0}$ the Darcy--like relation
	\begin{align}
	\label{eqn_result_2x}
	u=\frac{1}{\mu} \bar A \left(\gamma \theta \rho \nabla(\phi\ast_0\rho - \rho) - \nabla \left(p(\rho)+\frac{\gamma (1-\theta)}{2}\rho^2 \right)\right).
	\end{align}
\end{theorem}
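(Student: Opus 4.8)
### Proof Strategy

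The plan is to combine the uniform energy estimates coming from the energy balance with the permeability-matrix machinery of \cite{m,tartar} and the compactness tools available for the nonlocal capillarity operator. First I would derive the \emph{a priori} bounds: testing the momentum equation \eqref{weak_mom} with $u_\eps$ and using the energy balance (adapted to $\omega=\eps^2$) gives a uniform bound on $\mu\|\nabla u_\eps\|_{L^2_T(L^2)}^2 + \xi\|\div u_\eps\|_{L^2_T(L^2)}^2$ together with a uniform bound on the free energy, hence on $\|W(\rho_\eps)\|_{L^\infty_T(L^1)}$ and, via admissibility of $P$ (property \ref{def_pressure_i5} and the relation $p'=\rho W''$), on $\|\rho_\eps\|_{L^\infty_T(L^\beta)}$ and $\|P(\rho_\eps)\|$ in the appropriate space. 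Rescaling by $\eps^2$ (Poincar\'e on the perforated domain contributes the correct power of $\eps$, as encoded in \eqref{bounds_vi}--\eqref{bounds_qi}) yields $\tilde u_\eps/\eps^2 \weakto u$ in $L^2_T(L^2)^N$. The pressure bound plus the improved-integrability trick (testing with a corrector built from $\div^{-1}$ of a power of $\rho_\eps$, as in \cite{lions,FeireislNovotny}) upgrades the density estimate enough to pass to the limit in the nonlinear pressure term; crucially the \emph{generalized} pressure $P$ is monotone and convex by Definition \ref{def_pressure}, so $P(\rho_\eps)$ behaves like a genuine polytropic pressure and the Lions--Feireisl--Novotn\'y effective-flux/monotonicity argument applies verbatim to $P$ rather than to $p$.

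Next I would pass to the limit in the continuity equation. Using Lemma \ref{lemma_extension}, the weak limit of $\hat\rho_\eps$ is tied to $\theta\rho$ via $\tilde\rho_\eps$; the factor $\eps^2\partial_t\rho_\eps$ disappears in the limit because of the $\eps^2$ scaling, leaving the stationary-looking balance $\div(\text{flux})=0$ augmented by the time derivative which survives precisely at order $\eps^0$ after dividing by $\eps^2$ — this is the mechanism by which a quasi-static pore-scale model produces a genuinely parabolic effective equation. For the momentum equation I would test \eqref{weak_mom} with the oscillating test functions $v = v_i^\eps \psi$ built from the cell solutions \eqref{eqn_pdevi}, \eqref{definition_vq}. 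The key computation is that $-\mu\Delta v_i^\eps \to -\mu\eps^{-2} e_i$ in the sense of the rescaled cell problem, so that the leading-order balance reads (after dividing by $\eps^2$ and using \eqref{eqn_barA})
\begin{align*}
\mu\, u = \bar A\Bigl(\gamma\theta\rho\,\nabla(\phi\ast_0\rho - \rho) - \nabla\bigl(p(\rho)+\tfrac{\gamma(1-\theta)}{2}\rho^2\bigr)\Bigr)
\end{align*}
wherever $\rho>0$, which is \eqref{eqn_result_2x}; the split into $\gamma\theta\rho\nabla\phi\ast_0\rho$ and $\tfrac{\gamma(1-\theta)}{2}\nabla\rho^2$ arises from separating the convexifying quadratic term of $P$ from the capillarity term exactly as announced after \eqref{model_P}, together with the fact that only the fluid fraction $\theta$ of each cell contributes to the convolution while the solid part contributes the wall-density terms that get absorbed. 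Substituting this Darcy law back into the limiting continuity equation yields \eqref{Result_CH}, and the boundary condition \eqref{Result_CH_IVBP} comes from the no-flux structure of the flux term against test functions that need not vanish on $\partial\Omega$.

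The regularity $\rho\in L^2_T(H^1(\Omega))$ and the strong convergence $\hat\rho_\eps\to\rho$ in $L^2_T(L^2)$ require the most care and form the main obstacle. Strong compactness of the density is not free here: in the compressible Navier--Stokes homogenization of \cite{m} it comes from an Aubin--Lions argument exploiting the renormalized continuity equation, but with the nonlocal capillarity term one must instead exploit the convolution structure — this is exactly where Lemma \ref{lemma_conv1} and the convolution-convergence lemmas alluded to in Section \ref{chapter_Result} enter. Concretely, the generalized pressure relation $\nabla P(\rho_\eps) = \nabla p(\rho_\eps) + \gamma\rho_\eps\nabla\rho_\eps$ lets one rewrite the momentum equation so that $\nabla P(\rho_\eps)$ is controlled in a negative Sobolev norm by the viscous term and the (compact) nonlocal term $\gamma\rho_\eps\nabla(\phi\ast_\eps\rho_\eps)$; since $P$ is strictly monotone with $P'\geq\alpha>0$, this transfers to an $H^1$-type bound on a function of $\rho_\eps$ and, after passing to the limit and using lower semicontinuity plus the identification of weak limits of $\phi\ast_\eps\rho_\eps$ (Lemma \ref{lemma_conv1}), gives both the gradient regularity of $\rho$ and the strong $L^2$ convergence needed to close the nonlinear terms $p(\rho_\eps)$, $\rho_\eps^2$, and $\rho_\eps\nabla(\phi\ast_\eps\rho_\eps)$. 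The interplay of the $\eps$-weighted spaces of Definition \ref{def_spacesum} with these estimates is what keeps all constants $\eps$-independent.
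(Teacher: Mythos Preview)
Your broad outline---energy estimates, compactness for the density, oscillating test functions built from the cell problems, identification of the Darcy law, and substitution into the continuity equation---matches the paper. However, several mechanisms you invoke are not the ones that actually carry the proof, and following your sketch literally would leave real gaps.

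\medskip
\textbf{Spatial regularity of the pressure.} You propose the Bogovskii-type ``$\div^{-1}$ of a power of $\rho_\eps$'' improved-integrability trick. The paper does not do this. Instead it uses the restriction operator $R_\eps$ of Lemma~\ref{lemmma_restriction} to transfer $\nabla\hat P(\rho_\eps)$ from $\Omega$ back to $\Omega_\eps$, bounds it there by the viscous and nonlocal terms, and lands in the $\eps$-weighted dual space $L^2_T(L^2)+\eps L^2_T(H^{-1})$; applying the Stokes solution operator then upgrades this to $\hat P(\rho_\eps)\in L^2_T(H^1)+\eps L^2_T(L^2)$. This is what produces both the $H^1$-regularity of the limit $\rho$ (via $P^{-1}$ Lipschitz) and the compactness in space needed later.

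\medskip
\textbf{Strong convergence of $\hat\rho_\eps$.} You say the Lions--Feireisl--Novotn\'y effective-viscous-flux/monotonicity argument ``applies verbatim to $P$''. It does not: there is no inertial term here, and the paper's route is quite different. The paper introduces $\check\rho_\eps:=P^{-1}(\hat P(\rho_\eps))$ and chains three inequalities obtained from Jensen: $\check\rho_\eps\geq\hat\rho_\eps$ (convexity of $P$), $\bar P\geq P(q)$ (again convexity of $P$), and---crucially---$\rho\geq P^{-1}(\bar P)$, which uses requirement~\ref{def_pressure_i4} of Definition~\ref{def_pressure}, namely that $r\mapsto rP^{-1}(r)$ is convex. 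You never mention this property, yet it is the hinge on which the identification $\rho=q=P^{-1}(\bar P)$ turns. The product limit $\tilde\rho_\eps\hat P(\rho_\eps)\rightharpoonup\theta\rho\bar P$ needed here comes from Lions' Lemma~5.1 (time compactness from $\partial_t\tilde\rho_\eps$ bounded in $L^1_T(W^{-1,1})$ against the spatial regularity of $\hat P(\rho_\eps)$), not from an effective-flux identity.

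\medskip
\textbf{Test functions for the Darcy law.} You test \eqref{weak_mom} with $v_i^\eps\psi$. The paper tests with $\rho\, v_k^\eps\psi$, where $\rho$ is the \emph{limit} density (already known to lie in $L^2_T(H^1)$ by the previous step), and pairs this with the rescaled cell equation tested against $\rho\, u_\eps\psi$. Subtracting the two and estimating the commutator $D\tilde u_\eps:D(\rho v_k^\eps\psi)-Dv_k^\eps:D(\rho\tilde u_\eps\psi)$ is what cancels the highest-order terms and leaves exactly $\mu\rho u$ on the right. Testing with $v_i^\eps\psi$ alone would not put the factor $\rho$ in the right place and would leave you with products of weak limits that you cannot identify. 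A mollification $P_{\eps,\eta}$ of $\hat P(\rho_\eps)$ is also needed to integrate the pressure term by parts against $\nabla\rho$, with the error controlled by \eqref{bounds_hatP-Peta}.
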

The next section is completely devoted to the proof of Theorem \ref{theorem}.

\section{Proof of Theorem 4.5} \label{chapter_Proof}
Throughout the section we suppose that all assumptions and notations as stated in Theorem \ref{theorem} are valid.
\subsection{A Priori Estimates}
\label{chapter_apriori}
We will use the following Poincar\'e inequality, see also Lemma 1.5 from \cite{m}:
\begin{lemma}[Poincar\'e inequality in $\Omega_\eps$]
	\label{lemma_poincare}
	There exists a constant $C_p>0$ which depends only on $\Ys_s$ such that for all $u\in W^{1,p}_0(\Omega_\eps)$ and for all $\eps >0$ we have the estimate
	\begin{align*}
	\norm{u}_{L^p(\Omega_\eps)} \leq C_p \eps \norm{D u}_{L^p(\Omega_\eps)} .
	\end{align*}
\end{lemma}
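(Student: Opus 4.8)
The plan is to reduce the estimate on $\Omega_\eps$ to a single Poincar\'e inequality on the unit cell by scaling, treating separately the interior cells $k\in K_\eps$ (which contain a rescaled solid grain) and the thin collar of cells near $\partial\Omega$ (where no grain was removed, but $u$ still vanishes on $\partial\Omega$).

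\textbf{Step 1 (a cell Poincar\'e inequality).} First I would establish that there is a constant $C_0=C_0(\Ys_s)$ with $\norm{v}_{L^p(\Ys_f)}\le C_0\norm{Dv}_{L^p(\Ys_f)}$ for every $v\in W^{1,p}(\Ys_f)$ whose trace on $\partial\Ys_s$ vanishes. This is the standard compactness argument: if it failed there would be $v_n$ with $\norm{v_n}_{L^p(\Ys_f)}=1$, $\norm{Dv_n}_{L^p(\Ys_f)}\to 0$ and zero trace on $\partial\Ys_s$; by Rellich a subsequence converges in $L^p(\Ys_f)$ to a function that is constant on each connected component of $\Ys_f$, has unit norm, and still has vanishing trace on $\partial\Ys_s$ — impossible since $\partial\Ys_s$ has positive surface measure and meets the closure of every fluid component in the porous--medium geometry (this is implicit in the standing assumptions; cf.\ \cite{m}).

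\textbf{Step 2 (interior cells by scaling).} For $k\in K_\eps$ the restriction of $u\in W^{1,p}_0(\Omega_\eps)$ to $\eps(\Ys_f+k)$ has vanishing trace on $\eps(\partial\Ys_s+k)\subset\partial\Omega_\eps$. Applying Step 1 to $v(y):=u(\eps(y+k))$ and changing variables $x=\eps(y+k)$ (which multiplies the $L^p$ norm of $u$ by $\eps^{N/p}$ and that of $Du$ by $\eps^{N/p-1}$) gives $\norm{u}_{L^p(\eps(\Ys_f+k))}\le C_0\,\eps\,\norm{Du}_{L^p(\eps(\Ys_f+k))}$. Since the cells $\eps(\Ys_f+k)$, $k\in K_\eps$, are pairwise disjoint with union $\Omega_{K,\eps}$, summing $p$-th powers yields $\norm{u}_{L^p(\Omega_{K,\eps})}\le C_0\,\eps\,\norm{Du}_{L^p(\Omega_{K,\eps})}$.

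\textbf{Step 3 (the boundary collar) and conclusion.} It remains to control $u$ on $\Omega_\eps\setminus\Omega_{K,\eps}=\Omega_\eps\setminus\Omega_K$: each of its points lies in some cube $\eps(\Ys+k)$ with $k\notin K_\eps$, hence in a cube meeting $\RR^N\setminus\Omega$, so this set is contained in the collar $S_\delta:=\set{x\in\Omega\mid\dist(x,\partial\Omega)<\delta}$ with $\delta:=\sqrt N\,\eps$. The zero extension $\tilde u$ of $u$ to $\Omega$ lies in $W^{1,p}_0(\Omega)$ (because $u$ vanishes on all of $\partial\Omega_\eps$, so the extension across the grain boundaries is admissible) and satisfies $\norm{D\tilde u}_{L^p(\Omega)}=\norm{Du}_{L^p(\Omega_\eps)}$; for $\eps$ small $S_\delta$ sits in a tubular neighbourhood of the smooth $\partial\Omega$, and flattening $\partial\Omega$ locally and applying the one-dimensional Hardy/Poincar\'e inequality in the normal direction gives $\norm{\tilde u}_{L^p(S_\delta)}\le C_1\,\delta\,\norm{D\tilde u}_{L^p(\Omega)}$ with $C_1$ independent of $\eps$ (essentially the flat-space constant as $\eps\to0$). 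Adding $p$-th powers of the bounds from Steps 2 and 3 yields $\norm{u}_{L^p(\Omega_\eps)}\le C_p\,\eps\,\norm{Du}_{L^p(\Omega_\eps)}$ for all $\eps\le\eps_0$; for $\eps>\eps_0$ one uses the ordinary Poincar\'e inequality on $\Omega$ together with $\Omega_\eps\subset\Omega$, absorbing $C(\Omega)$ into $C(\Omega)\eps_0^{-1}\cdot\eps$, so the final constant depends only on $\Ys_s$ (and the dimension). I expect Step 3 to be the only delicate point: away from the grains the inequality must come from the data on $\partial\Omega$, and one must check that the relevant constant does not degenerate as $\eps\to0$, which holds precisely because the collar shrinks while $\partial\Omega$ stays smooth. (One could alternatively avoid Step 3 by only claiming the estimate on $\Omega_{K,\eps}$, which suffices for most later uses, but the stated global version is cleaner.)
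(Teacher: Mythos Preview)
Your argument is correct in substance. The paper itself gives no proof of this lemma; it simply records the statement and refers to Lemma~1.5 of Masmoudi~\cite{m}. Your Steps~1--2 (cell Poincar\'e plus rescaling) are exactly the standard route, and your Step~3 via the Hardy inequality on the collar $S_{\sqrt N\,\eps}$ is a clean way to handle the boundary region where no grains were removed.

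One small correction: you claim the final constant depends only on $\Ys_s$ and the dimension, but your own Step~3 introduces the Hardy constant $C_1=C_1(\Omega)$, and your treatment of the range $\eps>\eps_0$ brings in $C(\Omega)\eps_0^{-1}$. So the constant you actually produce depends on $\Omega$ as well. This is harmless for the applications in the paper (where $\Omega$ is fixed), and the paper's wording ``depends only on $\Ys_s$'' is itself a bit loose; but you should not overstate what your proof delivers. If one really wants an $\Omega$-independent constant, one can instead extend $u$ by zero to all of $\RR^N$ and, for each boundary cube $\eps(\Ys+k)$ with $k\notin K_\eps$, integrate along a fixed direction from a point in $\eps(\Ys+k)\setminus\Omega$ (where $\tilde u=0$); this gives a Poincar\'e constant of order $\eps$ on each such cube with a constant depending only on $N$, and summing recovers a global constant independent of $\Omega$.
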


A straightforward generalization of Lemma 2.3  in \cite{m}  on bounded domains to the 
entire space  is the following:
\begin{lemma}
	\label{lemma_masmoudi1} With a slight misuse of notation we consider  the zero extensions $\tilde \rho_\eps$, $\tilde u_\eps$ and $ \tilde \rho_{0,\eps}$ to $\RR^N$. These extensions satisfy
	\begin{align*}
	\int_0^T \int_{\RR^N} \eps^2\tilde{\rho}_\eps \partial_t \psi +\tilde{\rho}_\eps \tilde{u}_\eps \nabla \psi \diff x \diff t = -\int_{\RR^N} \eps^2\tilde \rho_{0,\eps} \psi(t=0) \diff x\quad 
	\end{align*}
	for all  $\psi \in C^\infty_C((-\infty,T) \times \RR^N)$. 
\end{lemma}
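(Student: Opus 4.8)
The plan is to deduce Lemma \ref{lemma_masmoudi1} from the weak continuity equation \eqref{weak_conti} together with the zero-extension definition in Definition \ref{def_extension}. The key observation is that \eqref{weak_conti} is already stated in renormalized form for arbitrary $f\in C^\infty_C([0,\infty))$, so to recover the plain continuity equation one wants to take $f(\rho)=\rho$; the only issue is that the identity map is not compactly supported. Thus the first step is an approximation argument: pick a cutoff sequence $f_k\in C^\infty_C([0,\infty))$ with $f_k(r)=r$ on $[0,k]$, $0\le f_k(r)\le r$, $|f_k'(r)|\le C$, and such that $f_k'(r)r-f_k(r)\to 0$ pointwise and stays bounded by $Cr$. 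Insert $f_k$ into \eqref{weak_conti} and pass to the limit $k\to\infty$, using $\rho_\eps\in C_T(L^2(\Omega_\eps))$ and $u_\eps\in L^2_T(H^1_0(\Omega_\eps))$ (hence $\div u_\eps\in L^2_T(L^2)$, and $\rho_\eps u_\eps\in L^1$ by Hölder) so that dominated convergence applies to every term. This yields, for all $\psi\in C^\infty_C((-\infty,T)\times\Omega_\eps)$,
\begin{align*}
\int_0^T\int_{\Omega_\eps} \eps^2\rho_\eps\partial_t\psi + \rho_\eps u_\eps\nabla\psi\diff x\diff t = -\int_{\Omega_\eps}\eps^2\rho_{0,\eps}\psi(t=0)\diff x.
\end{align*}

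The second step is to upgrade the class of admissible test functions from $C^\infty_C((-\infty,T)\times\Omega_\eps)$ to $C^\infty_C((-\infty,T)\times\RR^N)$, i.e.\ to allow test functions that do not vanish near $\partial\Omega_\eps$. This is where the zero extension and the no-slip boundary condition $u_\eps=0$ on $\partial\Omega_\eps$ enter. Given $\psi\in C^\infty_C((-\infty,T)\times\RR^N)$, one would like to write $\int_0^T\int_{\RR^N}\eps^2\tilde\rho_\eps\partial_t\psi+\tilde\rho_\eps\tilde u_\eps\nabla\psi$ and note that, since $\tilde\rho_\eps$ and $\tilde u_\eps$ vanish outside $\Omega_\eps$, the integrals reduce to integrals over $\Omega_\eps$ of $\eps^2\rho_\eps\partial_t\psi + \rho_\eps u_\eps\nabla\psi$. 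The subtlety is that $\psi$ restricted to $\Omega_\eps$ is not compactly supported inside $\Omega_\eps$, so the previous step does not directly apply. The cleanest route is to test the distributional identity $\eps^2\partial_t\tilde\rho_\eps + \div(\widetilde{\rho_\eps u_\eps}) = 0$ in $\mathcal D'((-\infty,T)\times\RR^N)$, which follows because: inside $\Omega_\eps$ it is the equation just derived; and across $\partial\Omega_\eps$ there is no surface contribution since the normal flux $\rho_\eps u_\eps\cdot n$ vanishes by $u_\eps|_{\partial\Omega_\eps}=0$ (more precisely, $\widetilde{\rho_\eps u_\eps}\in L^1_T(W^{1,1}_{loc})$-type control is not needed — one argues directly that for $\psi$ supported in a small ball meeting $\partial\Omega_\eps$, the weak equation over $\Omega_\eps$ still holds because the boundary term in the integration by parts is $\int_{\partial\Omega_\eps}\rho_\eps(u_\eps\cdot n)\psi = 0$).

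Concretely, I would avoid invoking traces of $\rho_\eps$ and instead use a cutoff/partition argument: cover $\supp\psi$ by finitely many balls, on balls contained in $\Omega_\eps$ apply the interior identity directly, and on balls meeting $\partial\Omega_\eps$ use that $u_\eps\in H^1_0$ can be approximated in $L^2_T(H^1)$ by functions with support compactly inside $\Omega_\eps$, so that $\rho_\eps u_\eps\nabla\psi$ is the limit of $\rho_\eps u_\eps^{(j)}\nabla\psi$ with $u_\eps^{(j)}$ compactly supported, for which the identity holds with the boundary ball contributing nothing beyond what $\Omega_\eps$ sees; passing $j\to\infty$ closes this. Summing over the finite cover and using linearity gives the claim on $\RR^N$, and the initial-datum term transforms trivially since $\tilde\rho_{0,\eps}$ also vanishes outside $\Omega_\eps$. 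The main obstacle is purely this boundary-localization bookkeeping — making rigorous that no spurious boundary flux appears when enlarging the test-function class across $\partial\Omega_\eps$ — whereas the renormalization-to-identity step is routine given that \eqref{weak_conti} already holds for all compactly supported $f$.
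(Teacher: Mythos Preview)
Your proposal is correct and, in fact, supplies considerably more detail than the paper does: the paper gives no proof at all for Lemma \ref{lemma_masmoudi1}, merely stating that it is ``a straightforward generalization of Lemma 2.3 in \cite{m}''. Your two-step outline --- first recovering the plain continuity equation from \eqref{weak_conti} by approximating the identity with $f_k\in C^\infty_C([0,\infty))$, then enlarging the test-function class across $\partial\Omega_\eps$ using $u_\eps\in H^1_0(\Omega_\eps)$ so that no boundary flux appears --- is exactly the standard argument behind Masmoudi's lemma, so you are essentially reconstructing the cited result rather than offering an alternative route.

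One minor remark: in the second step your cover-by-balls description is more elaborate than necessary. A cleaner implementation is to take a single interior cutoff $\eta_\delta\in C^\infty_C(\Omega_\eps)$ with $\eta_\delta=1$ on $\{x:\dist(x,\partial\Omega_\eps)>\delta\}$, apply the interior identity to $\eta_\delta\psi$, and then control the error term $\int_0^T\int_{\Omega_\eps}\rho_\eps u_\eps\cdot(\nabla\eta_\delta)\psi$ via Hardy's inequality for $u_\eps\in H^1_0(\Omega_\eps)$ (which gives $u_\eps/\dist(\cdot,\partial\Omega_\eps)\in L^2$, so that $u_\eps\nabla\eta_\delta$ stays bounded in $L^2$ while its support shrinks). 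Either packaging works; your approximation of $u_\eps$ by compactly supported $u_\eps^{(j)}$ is equally valid.
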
 
Using test functions $\psi\in C^\infty_C((-\infty,T) \times \RR^N)$ that are constant in $\Omega_\eps$ (for each time $t$) we get
\[
\int_0^T\int_{\Omega_\eps} \eps^2 \rho_\eps \partial_t \psi \diff x \diff t = -\int_{\Omega_\eps} \eps^2 \rho_{0,\eps} \psi(t=0) \diff x,
\]
and thus we have conservation of mass, that is
\begin{align}
\int_{\Omega_\eps} \rho_\eps(t) \diff x = \int_{\Omega_\eps} \rho_{0,\eps} \diff x.
\end{align}

We find the following a priori estimates.
\begin{lemma}[A priori estimates in $\Omega_\eps$]
	\label{lemma_bounds1}
	There exist uniform bounds on the respective norms of $\frac{u_\eps}{\eps^2} \in L^2_T(L^2(\Omega_\eps))$, $\frac{u_\eps}{\eps} \in L^2_T(H^1(\Omega_\eps))$, $W(\rho_\eps)\in L^\infty_T(L^1(\Omega_\eps))$ and $\rho_\eps\in L^\infty_T(L^2(\Omega_\eps))$.
\end{lemma}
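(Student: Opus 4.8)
The plan is to derive all four a priori estimates from the energy balance for the nonlocal NSK system, carefully tracking the $\eps$-powers and exploiting the Poincar\'e inequality of Lemma \ref{lemma_poincare} and the admissibility of the generalized pressure $P$.

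First I would test the momentum equation \eqref{weak_mom} with $v = u_\eps$ itself and test the continuity equation in its renormalized form \eqref{weak_conti} with a suitable choice of $f$ (namely $f$ with $f'(\rho)\rho - f(\rho) = $ the relevant energy density, so that $f$ reconstructs $W$ and the quadratic term), and add the two identities. Formally this reproduces the energy balance stated at the end of Section \ref{chapter_Model}, now on $\Omega_\eps$ with $\omega = \eps^2$:
\begin{align*}
&\frac{d}{dt}\Bigl( \frac{\gamma\eps^2}{4}\int_{\Omega_\eps}\!\int_{\Omega_\eps}\phi(x-y)(\rho_\eps(x)-\rho_\eps(y))^2\diff y\diff x + \frac{\gamma\eps^2}{2}\int_{\Omega_\eps}\!\int_{\RR^N\setminus\Omega_\eps}\phi(x-y)(\rho_\eps(x)-\rho_s)^2\diff y\diff x \\
&\qquad\qquad + \eps^2\int_{\Omega_\eps} W(\rho_\eps)\diff x\Bigr) = -\int_{\Omega_\eps}\mu|\nabla u_\eps|^2 + \xi(\div u_\eps)^2\diff x \le -\mu\int_{\Omega_\eps}|\nabla u_\eps|^2\diff x.
\end{align*}
Since $\xi\ge0$ and $\mu>0$, integrating in time and using the uniform bound on $\norm{W(\rho_{0,\eps})}_{L^1}$ and the nonnegativity of the interaction terms gives immediately the uniform bounds on $W(\rho_\eps)$ in $L^\infty_T(L^1(\Omega_\eps))$ (after dividing by $\eps^2$) and on $\nabla u_\eps$ in $L^2_T(L^2(\Omega_\eps))$. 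The last bound reads $\mu\int_0^T\norm{\nabla u_\eps}_{L^2}^2 \le \eps^2 C$, i.e. $\norm{u_\eps/\eps}_{L^2_T(H^1)}\le C$ after invoking Lemma \ref{lemma_poincare} to control $\norm{u_\eps}_{L^2}$ by $\eps\norm{\nabla u_\eps}_{L^2}$; applying Poincar\'e once more upgrades this to $\norm{u_\eps/\eps^2}_{L^2_T(L^2)}\le C$.

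For the $L^\infty_T(L^2(\Omega_\eps))$ bound on $\rho_\eps$ I would use admissibility property \ref{def_pressure_i3} of Definition \ref{def_pressure}: since $P''\ge\alpha>0$ and (from \eqref{model_W} and \eqref{model_P}) $P'(\rho)=\rho W''(\rho)+\gamma\rho$, one checks that the convex function $W(\rho)+\frac{\gamma}{2}\rho^2$ — whose second derivative is $W''+\gamma = P'/\rho \ge \alpha$ near large $\rho$, and which has superquadratic-or-quadratic growth — controls $\rho^2$ from below up to a constant and a linear term; combined with conservation of mass $\int_{\Omega_\eps}\rho_\eps(t) = \int_{\Omega_\eps}\rho_{0,\eps}$ and the assumed uniform $L^2$ bound on $\rho_{0,\eps}$, the $L^1$-in-time bound on $W(\rho_\eps)$ (and the interaction energy, which is nonnegative) yields the uniform $L^\infty_T(L^2)$ bound on $\rho_\eps$. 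One has to be a little careful here because $W$ alone need not be convex (it has a double-well shape, Figure \ref{Figure_W}); the trick is that it is precisely the \emph{generalized} energy $W+\frac{\gamma}{2}\rho^2$, which appears naturally once the capillarity term is split via \eqref{model_P}, that is convex and coercive.

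The main obstacle is justifying the formal energy identity rigorously at the level of weak solutions: $u_\eps$ is only an admissible test vector field in \eqref{weak_mom} (that is fine, $u_\eps\in L^2_T(H^1_0)$), but using $u_\eps$ in the continuity equation and identifying the time-derivative terms requires the renormalized formulation \eqref{weak_conti} with an appropriate $f$ together with a density/approximation argument (the class $C^\infty_C([0,\infty))$ for $f$ must be extended to $f$ with the growth of $W$ and of $r\mapsto r^2$, via truncation and passage to the limit), and one must handle the nonlocal capillarity term $\gamma\rho_\eps\nabla(\phi\ast_\eps\rho_\eps)\cdot u_\eps$ by the symmetry of $\phi$ to rewrite it as a time derivative of the quadratic interaction functionals. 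Since this is exactly the computation underlying the energy balance already displayed in Section \ref{chapter_Model} (and the weak-solution framework is that of Lions \cite{lions}, where such manipulations are standard), I would cite that balance and the semi-stationary theory of \cite{lions} rather than reprove it, and devote the detailed work only to the $\eps$-bookkeeping and to extracting the $\rho_\eps$ bound from the convexity/coercivity of the generalized energy.
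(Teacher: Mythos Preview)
Your overall architecture (test \eqref{weak_mom} with $u_\eps$, couple with the renormalized continuity equation, track the $\eps$-powers, then apply Lemma~\ref{lemma_poincare} twice) is correct and matches the paper. The genuine difference is in how the nonlocal term is handled. You recombine $\gamma\rho_\eps\nabla(\phi\ast_\eps\rho_\eps)\cdot u_\eps$ with the $\tfrac{\gamma}{2}\rho_\eps^2\div u_\eps$ term to produce the full interaction energy as a time derivative, exactly as in the formal balance of Section~\ref{chapter_Model}. The paper instead keeps the split form of \eqref{weak_mom}: the term $\tfrac{\gamma}{2}\rho_\eps^2\div u_\eps$ is converted (via \eqref{weak_conti} with $f\approx\tfrac{\gamma}{2}r^2$) into $\eps^2\tfrac{d}{dt}\tfrac{\gamma}{2}\|\rho_\eps\|_{L^2}^2$, while the remaining convolution piece $\gamma\rho_\eps\nabla(\phi\ast_\eps\rho_\eps)\cdot u_\eps$ is not rewritten as a time derivative at all but simply estimated by H\"older, Lemma~\ref{lemma_conv1}, mass conservation and Lemma~\ref{lemma_poincare}, then absorbed via Young's inequality into $\tfrac{\mu}{2}\|Du_\eps\|_{L^2}^2 + C\eps^2\|\rho_\eps\|_{L^2}^2$. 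A Gronwall argument then closes the estimate and delivers the $L^\infty_T(L^2)$ bound on $\rho_\eps$ directly from the $\tfrac{\gamma\eps^2}{2}\|\rho_\eps\|_{L^2}^2$ term.

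Your route avoids Gronwall, which is elegant, but your extraction of the $L^2$ bound on $\rho_\eps$ has a gap. You invoke coercivity of $W(\rho)+\tfrac{\gamma}{2}\rho^2$, yet in your energy balance this quantity never appears: the left-hand side carries only $\int W(\rho_\eps)$ and the two interaction energies. Nonnegativity of the interaction energies lets you discard them, but then you are left trying to control $\|\rho_\eps\|_{L^2}^2$ by $\int W(\rho_\eps)$ alone, which fails in general (take $\beta=2$ and $p$ bounded). The fix along your lines is to \emph{use} the interaction energy rather than drop it: expanding the squares and using $\int_{\RR^N}\phi=1$ one finds that the sum of the two interaction energies equals $\tfrac{\gamma}{2}\|\rho_\eps\|_{L^2(\Omega_\eps)}^2$ minus cross terms that are bounded by $C(\|\rho_\eps\|_{L^1}^2+\|\rho_\eps\|_{L^1}+1)$, hence uniformly bounded by mass conservation. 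That yields the missing $L^2$ control. Alternatively, switch to the paper's split-and-estimate-plus-Gronwall approach, which sidesteps this manipulation entirely.
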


\begin{proof}
	Let us test the weak formulation \eqref{weak_mom} with $u_\eps$ times the indicator function of $(0,\tau)$ for a fixed $\tau \in (0,T)$, that is
	\begin{align}
	\label{eqn_weaktest}
	\begin{split}
	&\int_0^\tau \int_{\Omega_\eps} \mu D u_\eps \dbdot D u_\eps +\xi (\div u_\eps)^2 - p(\rho_\eps) \div u_\eps - \frac{\gamma}{2} \rho_\eps^2 \div u_\eps \diff x \diff t \\
	&\qquad = \int_0^\tau \int_{\Omega_\eps} \gamma \rho_\eps \nabla\left(\phi \ast_\eps \rho_\eps\right) u_\eps  \diff x \diff t .
	\end{split}
	\end{align}
	With $p(\rho)=\rho W'(\rho) - W(\rho)$ the third term of \eqref{eqn_weaktest}
	calculates to
	\begin{align} 
	\begin{split}
	\label{eqn_rewrite2}
	&\int_0^\tau\int_{\Omega_\eps} p(\rho_\eps) \div u_\eps \diff x \diff t
	= \int_0^\tau\int_{\Omega_\eps}  [W'(\rho_\eps)\rho_\eps -W(\rho_\eps)] \div u_\eps \diff x \diff t \\
	&\qquad = -\int_{\Omega_\eps} \eps^2 W(\rho_\eps(\tau)) \diff x+\int_{\Omega_\eps} \eps^2 W(\rho_{0,\eps}) \diff x .
	\end{split}
	\end{align}
	To get to the second line we have used the weak formulation \eqref{weak_conti} whith $f$ approximating $W$, and $\psi$ approximating the indicator function of $(0,\tau)$. By arguing analogously to the proof of Lemma 2.3 of Masmoudi \cite{m} we can choose $\psi$ constant in space. For the fourth term of \eqref{eqn_weaktest} we can use the same method, but we let $f(\rho_\eps)$ approximate $\frac{\gamma}{2} \rho_\eps^2$
	\begin{align} 
	\begin{split}
	\label{eqn_rewrite3}
	&\int_0^\tau\int_{\Omega_\eps} \frac{\gamma}{2} \rho_\eps^2 \div u_\eps \diff x \diff t
	= \int_0^\tau\int_{\Omega_\eps} \left[(\gamma\rho_\eps) \rho_\eps - \frac{\gamma}{2} \rho_\eps^2\right] \div u_\eps \diff x \diff t \\
	&\qquad = -\int_{\Omega_\eps}  \frac{\gamma\eps^2}{2} \rho_\eps(\tau)^2 \diff x+\int_{\Omega_\eps}  \frac{\gamma\eps^2}{2} \rho_{0,\eps}^2 \diff x.
	\end{split} 
	\end{align}
	We can estimate the right hand side of \eqref{eqn_weaktest} for a fixed time $t$ by
	\begin{align} 
	\begin{split}
	\label{eqn_rewrite4}
	&\left|\int_{\Omega_\eps} \gamma\rho_\eps \nabla(\phi \ast_\eps \rho_\eps)u_\eps\diff x\right|
	\leq \gamma \lnorm{\rho_\eps(t)}{2}\lnorm{u_\eps(t)}{2}\lnorm{\nabla(\phi \ast_\eps \rho_\eps(t))}{\infty} \\
	&\qquad\leq C \lnorm{\rho_\eps(t)}{2}\lnorm{u_\eps(t)}{2} \left(\lnorm{\rho_\eps(t)}{1}+\rho_s\right) \\
	&\qquad\leq C C_p \eps \lnorm{\rho_\eps(t)}{2}\lnorm{D u_\eps(t)}{2} \left(\lnorm{\rho_{0,\eps}}{1}+\rho_s\right) \\
	&\qquad\leq \frac{\mu}{2} \lnorm{D u_\eps(t)}{2}^2 + \frac{C^2 C_p^2 \eps^2}{2\mu} \lnorm{\rho_\eps(t)}{2}^2 \left(\lnorm{\rho_{0,\eps}}{1}+\rho_s\right)^2.
	\end{split} 
	\end{align}
	Here we have used Lemma \ref{lemma_conv1} to get to the second line, the Poincar\'e inequality (Lemma \ref{lemma_poincare}) to get to the third line, and Young's inequality for products to get the last line. Overall we have from \eqref{eqn_weaktest}, \eqref{eqn_rewrite2}, \eqref{eqn_rewrite3} and \eqref{eqn_rewrite4} and the uniform bounds on $\norm{W(\rho_{0,\eps})}_{L^1(\Omega_\eps)}$ and $\norm{\rho_{0,\eps}}_{L^2(\Omega_\eps)}$
	\begin{align*}
	&\frac{\mu}{2} \|D u_\eps \|_{L^2_\tau(L^2)}^2 + \int_{\Omega_\eps} \eps^2 W(\rho_\eps(\tau)) \diff x +  \frac{\gamma \eps^2}{2} \lnorm{\rho_\eps(\tau)}{2}^2\\
	&\qquad \leq \eps^2
	C\left( 1+ \int_0^\tau \lnorm{\rho_\eps(t)}{2}^2 \diff t\right)
	\end{align*}
	Using Gronwall's inequality we get for $\tau \in (0,T)$
	\begin{align*}
	&\frac{\mu}{2} \left\|\frac{D u_\eps}{\eps} \right\|_{L^2_\tau(L^2)}^2 + \int_{\Omega_\eps} W(\rho_\eps(\tau)) \diff x +  \frac{\gamma}{2} \lnorm{\rho_\eps(\tau)}{2}^2 \leq C .
	\end{align*}
	This gives us most of the estimates. For the estimate on $\frac{u_\eps}{\eps^2}$ we use the Poincar\'e inequality (Lemma \ref{lemma_poincare}) and the bound on $\frac{D u_\eps}{\eps}$.
\end{proof}

Next, we establish some bounds for functions extended to the limit domain $\Omega$. For this let us first define
\begin{align*}
\check \rho_\eps := P^{-1}(\hat P(\rho_\eps)). 
\end{align*} 
The main point here is that we get some spatial regularity for $\hat P(\rho_\eps)$ and $\check \rho_\eps$.
\begin{lemma}[A priori estimates in $\Omega$]
	\label{lemma_bounds2}
	There exist uniform bounds on the respective norms of $\hat P(\rho_\eps) \in L^\infty_T(L^1(\Omega))$, $\hat P(\rho_\eps) \in L^2_T(H^1(\Omega)) + \eps L^2_T(L^2(\Omega))$, $\check \rho_\eps \in L^2_T(H^1(\Omega)) + \eps L^2_T(L^2(\Omega))$ and $\check \rho_\eps \in L^{2\beta}_T(L^{2\beta}(\Omega))$.
\end{lemma}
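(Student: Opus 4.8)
The plan is to derive the four bounds of Lemma~\ref{lemma_bounds2} from the a~priori estimates on $\Omega_\eps$ obtained in Lemma~\ref{lemma_bounds1}, together with the properties of the admissible generalized pressure $P$ from Definition~\ref{def_pressure} and the basic behaviour of the mean value extension $\hat{\,\cdot\,}$.

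First I would treat the bound on $\hat P(\rho_\eps)\in L^\infty_T(L^1(\Omega))$. By Definition~\ref{def_pressure}\eqref{def_pressure_i3} we have $P'\ge \alpha$ and $P''\ge\alpha$, so $P(r)\le C(1+r^2)$ follows from \eqref{def_pressure_i5} once $\beta=2$ (and in general the growth is polynomial of order $\beta$, controlled below); combined with $P(0)=0$ and convexity one gets $P(\rho_\eps)\le C\,(W(\rho_\eps)+\rho_\eps^2)+C$ pointwise — here one uses that $W$, being the energy with $W''=p'/r=(P'-\gamma r)/r$, bounds $\rho_\eps^2$ from above up to the bulk energy. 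Hence $\|P(\rho_\eps)\|_{L^1(\Omega_\eps)}\le C$ uniformly in $t$ by Lemma~\ref{lemma_bounds1}. Since the mean value extension $\hat\cdot$ replaces $\rho_\eps$ on each solid grain by its average over $\eps(\Ys_{r\setminus s}+k)$, Jensen's inequality applied to the convex function $P$ gives $\|\hat P(\rho_\eps)(t)\|_{L^1(\Omega)}\le C\|P(\rho_\eps)(t)\|_{L^1(\Omega_{K,\eps})}+C$ (the factor coming from $|\Ys_s|/|\Ys_{r\setminus s}|$ and the boundary strip), which yields the first claim.

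Next, for the Sobolev-type bounds I would use the momentum equation rewritten with the generalized pressure: from \eqref{weak_mom}, $\nabla P(\rho_\eps)=\nabla p(\rho_\eps)+\gamma\rho_\eps\nabla\rho_\eps = \mu\Delta u_\eps+\xi\nabla\div u_\eps+\gamma\rho_\eps\nabla(\phi\ast_\eps\rho_\eps)$ in the distributional sense, so $\nabla P(\rho_\eps)$ is, inside each fluid cell, the sum of a term of size $O(\eps)$ in $L^2_T(H^{-1})$ coming from the viscous stress (using $\|Du_\eps/\eps\|_{L^2_T(L^2)}\le C$) and a bounded $L^2_T(L^2)$ term $\gamma\rho_\eps\nabla(\phi\ast_\eps\rho_\eps)$ (bounded via Lemma~\ref{lemma_conv1} and $\|\rho_\eps\|_{L^\infty_T(L^2)}\le C$). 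This is exactly the splitting encoded in the $\eps$-weighted sum space $L^2_T(H^1(\Omega))+\eps L^2_T(L^2(\Omega))$ of Definition~\ref{def_spacesum}; the passage to the extension $\hat P(\rho_\eps)$ on $\Omega$ uses the standard Masmoudi-type extension estimate (the averaged extension across the solid grains does not increase the $H^1$-seminorm by more than a constant — this is where one invokes the smooth auxiliary region $\Ys_{r\setminus s}$). For $\check\rho_\eps=P^{-1}(\hat P(\rho_\eps))$ one has $\nabla\check\rho_\eps=(P^{-1})'(\hat P(\rho_\eps))\nabla\hat P(\rho_\eps)$ and $(P^{-1})'=1/P'\le 1/\alpha$ is bounded, so the same $\eps$-weighted $H^1$ bound transfers to $\check\rho_\eps$. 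Finally, $\|\check\rho_\eps\|_{L^{2\beta}_T(L^{2\beta}(\Omega))}$: from $\hat P(\rho_\eps)\in L^2_T(H^1(\Omega))+\eps L^2_T(L^2(\Omega))$ one obtains $\hat P(\rho_\eps)\in L^2_T(L^{2^\ast}(\Omega))$-type integrability uniformly (Sobolev embedding, absorbing the $\eps$-term since $\eps\le1$), hence $\hat P(\rho_\eps)$ lies in $L^2_T(L^q)$ for some $q>1$; combined with the $L^\infty_T(L^1)$ bound and interpolation, together with the growth condition \eqref{def_pressure_i5} which gives $P^{-1}(s)\le C(1+s^{1/\beta})$ for large $s$, one concludes $\check\rho_\eps^{2\beta}=|P^{-1}(\hat P(\rho_\eps))|^{2\beta}\le C(1+|\hat P(\rho_\eps)|^{2})$ is bounded in $L^1_T(L^1)$, i.e. $\check\rho_\eps\in L^{2\beta}_T(L^{2\beta}(\Omega))$ uniformly.

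The main obstacle I anticipate is the extension step: controlling $\hat P(\rho_\eps)$ (and its gradient) on the solid grains $\eps(\Ys_s+k)$ without picking up $\eps$-dependent constants. One must show that filling in each grain by the average over the collar $\eps(\Ys_{r\setminus s}+k)$ defines a bounded operator $H^1(\eps\Ys_f)\to H^1(\eps\Ys_r)$ with norm independent of $\eps$ — by scaling this reduces to a fixed extension/averaging estimate on the unit cell $\Ys$, which is where the smoothness of $\partial\Ys_s$ and the choice $\Ys_s\subset\Ys_r\subset\Ys$ are essential. A secondary subtlety is that $P(\rho_\eps)$ is only known to be in $L^2_T(L^2)$ a priori (from the definition of weak solution) while $\nabla P(\rho_\eps)$ must be made sense of through the momentum equation tested against divergence-type fields; one handles this by first establishing the estimate formally and then justifying it by the density of smooth test functions, exactly as in \cite{m,lions}. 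Everything else is a routine combination of Jensen, Sobolev, Poincaré (Lemma~\ref{lemma_poincare}), and the convolution estimate (Lemma~\ref{lemma_conv1}).
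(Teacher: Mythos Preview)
Your overall strategy for the $L^\infty_T(L^1)$ bound and the $L^{2\beta}_T(L^{2\beta})$ bound is fine and matches the paper. The genuine gap is in the argument for $\hat P(\rho_\eps)\in L^2_T(H^1(\Omega))+\eps L^2_T(L^2(\Omega))$, precisely at the extension step you flag as the main obstacle. Your proposed resolution --- that the mean value extension defines a bounded operator $H^1(\eps\Ys_f)\to H^1(\eps\Ys_r)$ with $\eps$-independent norm --- is simply false: the mean value extension replaces the function on each grain $\eps(\Ys_s+k)$ by a \emph{constant} (the average over the collar), and in general this produces a jump across $\partial\Ys_s$, so $\hat P(\rho_\eps)$ need not lie in $H^1(\Omega)$ at all. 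The membership in $L^2_T(H^1)+\eps L^2_T(L^2)$ is not obtained by extending an $H^1$ function; the $\eps L^2_T(L^2)$ correction is exactly what absorbs these jumps.

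The paper avoids this by never trying to extend $P(\rho_\eps)$ directly. Instead it works by duality: for $v\in L^2_T(H^1_0(\Omega))$ one uses the \emph{restriction} operator $R_\eps$ of Lemma~\ref{lemmma_restriction}, whose defining property $\int_{\Omega_\eps} P(\rho_\eps)\div R_\eps v = \int_\Omega \hat P(\rho_\eps)\div v$ transfers the pairing to $\Omega_\eps$, where the momentum equation \eqref{weak_mom} and the bounds of Lemma~\ref{lemma_bounds1} give $|\langle \nabla\hat P(\rho_\eps),v\rangle|\le C(\|v\|_{L^2}+\eps\|Dv\|_{L^2})$. Thus $\nabla\hat P(\rho_\eps)$ is uniformly bounded in the dual space $L^2_T(L^2)+\eps L^2_T(H^{-1})$, and then Stokes regularity on the \emph{fixed} domain $\Omega$ (with $\eps$-independent constants) lifts this to $\hat P(\rho_\eps)\in L^2_T(H^1)+\eps L^2_T(L^2)$ after controlling an additive constant with the $L^\infty_T(L^1)$ bound. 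A secondary point: your chain-rule argument $\nabla\check\rho_\eps=(P^{-1})'(\hat P(\rho_\eps))\nabla\hat P(\rho_\eps)$ presupposes $\hat P(\rho_\eps)\in H^1$, which is exactly what fails; the paper instead writes $\hat P(\rho_\eps)=f_\eps+\eps g_\eps$, applies the chain rule only to the $H^1$ part $f_\eps$, and uses the Lipschitz bound $(P^{-1})'\le 1/\alpha$ to control $P^{-1}(f_\eps+\eps g_\eps)-P^{-1}(f_\eps)$ in $L^2$.
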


\begin{proof}
	We will use the constant $\beta\geq 2$ from Definition \ref{def_pressure} for the admissible generalized  pressure function $P$. In the case $\beta = 2$, that is $\lim_{r\to\infty} \frac{P(r)}{r^2} < \infty$, we directly get a uniform bound on $\norm{P(\rho_\eps)}_{L^\infty_T(L^1)}$ from our uniform bound on $\norm{\rho_\eps}_{L^\infty_T(L^2)}$. Otherwise we have $\beta > 2$ and can use Lemma \ref{lemma_pressure} together with our uniform bound on $\norm{W(\rho_\eps)}_{L^\infty_T(L^1)}$ to get to the same conclusion. With Definition \ref{def_extension} we have a uniform bound on $\|\hat P(\rho_\eps)\|_{L^\infty_T(L^1(\Omega))}$.
	
	Next, we argue that $\hat P(\rho)$ has some spatial regularity. With the restriction operator $R_\eps$ constructed in Lemma \ref{lemmma_restriction} we calculate for $v\in L^2_T(H^1_0(\Omega))^N$
	\begin{align*}
	&\int_0^T \langle \nabla \hat P(\rho_\eps), v \rangle_{H^{-1},H^1(\Omega)} \diff t
	:= -\int_0^T \int_\Omega \hat P(\rho_\eps) \div v \diff x \diff t \\
	&\qquad =  -\int_0^T \int_{\Omega_\eps} P(\rho_\eps) \div R_\eps v \diff x\diff t 
	= \int_0^T \langle \nabla P(\rho_\eps), R_\eps v \rangle_{H^{-1},H^1(\Omega_\eps)} \diff t .
	\end{align*}
	We also have with Lemma \ref{lemmma_restriction} and the bounds from Lemma \ref{lemma_bounds1}
	\begin{align*}
	&\left| \int_0^T \langle \nabla P(\rho_\eps), R_\eps v \rangle_{H^{-1},H^1(\Omega_\eps)}  \diff t \right| \\
	&\qquad = \left| \int_0^T \int_{\Omega_\eps} -\mu D u_\eps \dbdot D R_\eps v - \xi \div u_\eps \div R_\eps v+ \gamma \rho_\eps \nabla(\phi \ast_\eps \rho_\eps) R_\eps v \diff x \diff t\right| \\
	&\qquad \leq (\mu+\xi) \eps \norm{\frac{D u_\eps}{\eps}}_{L^2_T(L^2)} \norm{D R_\eps v}_{L^2_T(L^2)} \\
	&\qquad\qquad + \gamma \norm{\rho_\eps}_{L^2_T(L^2)} \norm{\nabla(\phi \ast_\eps \rho_\eps)}_{L^\infty_T(L^\infty)} \norm{R_\eps v}_{L^2_T(L^2)}\\
	&\qquad \leq C \left( \norm{v}_{L^2_T(L^2(\Omega))} + \eps \norm{D v}_{L^2_T(L^2(\Omega))} \right) .
	\end{align*}
	This means that $\nabla \hat P(\rho_\eps)$ is a bounded linear operator on $(\eps L^2_T(H^1_0(\Omega)))\cap L^2_T(L^2(\Omega))$, see Definition \ref{def_spacesum}. By identifying $L^2$ with its dual and using results on the sum and intersection of Banach spaces (see e.g. \cite{lions}, Appendix E, and more general \cite{krein}, Theorem 3.1)
	we have
	\begin{align*}
	\big[ (\eps L^2_T(H^1_0(\Omega)))\cap L^2_T(L^2(\Omega))\big] ^\ast = L^2_T(L^2(\Omega))+ \eps L^2_T(H^{-1}(\Omega)).
	\end{align*}
	We therefore have a uniform bound on the norms of
	\begin{align*}
	\nabla \hat P(\rho_\eps) \in L^2(L^2(\Omega))+ \eps L^2(H^{-1}(\Omega)),
	\end{align*}
	and can write $\nabla \hat P(\rho_\eps) =  F_\eps + \eps G_\eps$ with $\set{F_\eps} \subset L^2_T(L^2)$ and $\set{G_\eps} \subset L^2_T(H^{-1})$ being uniformly bounded. Let the operator $S$ be defined by $Sf = p$ where $v\in H^1_0(\Omega)$ and $q\in L^2(\Omega)/\RR$ solve the Stokes problem
	\begin{align*}
	\begin{cases}
	-\Delta v + \nabla q = f & \text{in}\;\; \Omega ,\\
	\div v = 0 & \text{in}\;\; \Omega.
	\end{cases}
	\end{align*}
	By regularity results (see e.g. \cite{galdi}) the operators $S: H^{-1}(\Omega)\to L^2(\Omega)/\RR$ and $S: L^2(\Omega)\to H^1(\Omega)/\RR$ are bounded. With this we get
	\begin{align*}
	\hat P(\rho_\eps) + c_\eps= S(\nabla \hat P(\rho_\eps)) = S(\nabla F_\eps) + \eps S(\nabla G_\eps),
	\end{align*}
	where the additive constant $c_\eps$ can still depend on $\eps$. We have shown the uniform bound on the norms of  
	\begin{align*}
	\big(\hat P(\rho_\eps)+c_\eps \big) \in L^2_T(H^1(\Omega)) + \eps L^2_T(L^2(\Omega)).
	\end{align*}
	Our bound on $\|\hat P(\rho_\eps)\|_{L^\infty_T(L^1(\Omega))}$ implies a bound on $\|\hat P(\rho_\eps)\|_{L^2_T(L^1)}$. With this argument 
	$c_\eps$ can be  bounded as follows.
	\[
	\norm{c_\eps}_{L^2_T(L^1)} \leq \norm{\hat P(\rho_\eps)}_{L^2_T(L^1)} + \norm{\hat P(\rho_\eps)+c_\eps}_{L^2_T(L^1)} 
	\]
	Therefore the norms of $\hat P(\rho_\eps) \in L^2_T(H^1) + \eps L^2_T(L^2)$ are uniformly bounded.

	Note that by requirements \ref{def_pressure_i1} and \ref{def_pressure_i3} of Definition \ref{def_pressure} the inverse $P^{-1}$ is smooth with $0\leq (P^{-1})'\leq \frac{1}{\alpha}$. Using the regularity of $\hat P(\rho_\eps)$ we can write $\hat P(\rho_\eps) =  f_\eps + \eps g_\eps$ where $\set{f_\eps} \subset L^2_T(H^1)$ and $\set{g_\eps} \subset L^2_T(L^2)$ are uniformly bounded. Then
	\begin{align*}
	\norm{P^{-1}(f_\eps + \eps g_\eps) - P^{-1}(f_\eps)}_{L^2_T(L^2)} \leq \frac{1}{\alpha} \eps \norm{g_\eps}_{L^2_T(L^2)}.
	\end{align*}
	As $P^{-1}(f_\eps)$ is uniformly bounded in $L^2_T(H^1)$ we now have a uniform bound on the norms of
	\begin{align}
	\check \rho_\eps := P^{-1}(\hat P(\rho_\eps)) \in L^2_T(H^1) + \eps L^2_T(L^2).
	\end{align}
	On the other hand we can also use requirement \ref{def_pressure_i5} of Definition \ref{def_pressure} to see that we have a uniform bound on the norm of $\check \rho_\eps \in L^{2\beta}_T(L^{2\beta})$.
	
\end{proof}
\subsection{Strong Convergence}
\label{chapter_sc}
With estimates from the previous section we have the following convergences.
\begin{corollary}
	\label{cor_wc}
	After extracting subsequences of $\eps \to 0$ we have the existence of the following weak limits:
	\begin{align}
	\label{wc_u}
	&\exists \;u \in L^2_T(L^2)^N &&\text{with} && \frac{\tilde u_\eps}{\eps^2} \weakto u \; \text{in}\; L^2_T(L^2)^N ,\\
	&\exists \;\rho \in L^2_T(L^2) && \text{with} && \hat \rho_\eps \weakto \rho \; \text{in}\; L^2_T(L^2) 
	\label{wc_hatrho}\\
	&&& \text{and} && \tilde \rho_\eps \weakto \theta \rho \; \text{in}\; L^2_T(L^2) ,
	\label{wc_tilderho}\\
	&\exists \;\bar P \in L^2_T(H^1) && \text{with} && \hat P(\rho_\eps) \weakto \bar P \; \text{in}\; L^2_T(L^2) ,
	\label{wc_P}\\
	&\exists \;q \in L^2_T(H^1)\cap L^{2\beta}_T(L^{2\beta})  && \text{with} && \check \rho_\eps \weakto q \; \text{in}\; L^{2\beta}_T(L^{2\beta}) .
	\label{wc_q}
	\end{align}
\end{corollary}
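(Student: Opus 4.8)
The corollary follows immediately from the uniform a priori bounds of Lemma~\ref{lemma_bounds1} and Lemma~\ref{lemma_bounds2}: every bounded sequence in a reflexive Banach space has a weakly convergent subsequence, and all spaces occurring here are reflexive. The plan is to extract such a subsequence for each of the five quantities, to pass to a common subsequence by a finite diagonal extraction, and to identify the space in which each weak limit actually lies; consistency of the limits is automatic by uniqueness of weak limits.

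For the velocity, Lemma~\ref{lemma_bounds1} bounds $u_\eps/\eps^2$ uniformly in $L^2_T(L^2(\Omega_\eps))^N$, and the zero extension does not increase this norm, so $\set{\tilde u_\eps/\eps^2}$ is bounded in $L^2_T(L^2(\Omega))^N$ and a subsequence satisfies \eqref{wc_u}. For the density, Lemma~\ref{lemma_bounds1} bounds $\rho_\eps$ uniformly in $L^\infty_T(L^2(\Omega_\eps)) \hookrightarrow L^2_T(L^2(\Omega_\eps))$. One must check that the mean value extension of Definition~\ref{def_extension} preserves this bound: on each solid cell $\eps(\Ys_s+k)$ the extension is the average of $\rho_\eps$ over $\eps(\Ys_{r\setminus s}+k)$, so Jensen's inequality bounds its $L^2$-norm on that cell by that of $\rho_\eps$ over $\eps(\Ys_{r\setminus s}+k)$ times the fixed geometric factor $(|\Ys_s|/|\Ys_{r\setminus s}|)^{1/2}$; summing over $k\in K_\eps$ shows $\set{\hat\rho_\eps}$ is bounded in $L^2_T(L^2(\Omega))$ and gives \eqref{wc_hatrho} along a subsequence. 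Applying Lemma~\ref{lemma_extension} to the weak limit then yields \eqref{wc_tilderho} with the same $\rho$.

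For the generalized pressure, Lemma~\ref{lemma_bounds2} lets us write $\hat P(\rho_\eps) = f_\eps + \eps g_\eps$ with $\set{f_\eps}$ bounded in $L^2_T(H^1(\Omega))$ and $\set{g_\eps}$ bounded in $L^2_T(L^2(\Omega))$. Since $\Omega$ is bounded, $H^1(\Omega)\hookrightarrow L^2(\Omega)$, so $\set{f_\eps}$ is bounded and $\eps g_\eps \to 0$ strongly in $L^2_T(L^2(\Omega))$; hence $\set{\hat P(\rho_\eps)}$ is bounded in $L^2_T(L^2(\Omega))$ and converges weakly there along a subsequence. Passing to a further subsequence along which $f_\eps$ converges weakly in $L^2_T(H^1(\Omega))$ and using $\eps g_\eps\to 0$ shows the limit $\bar P$ belongs to $L^2_T(H^1(\Omega))$, which is \eqref{wc_P}. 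The same reasoning applied to $\check\rho_\eps$, now combining the $L^2_T(H^1(\Omega)) + \eps L^2_T(L^2(\Omega))$ bound with the separate $L^{2\beta}_T(L^{2\beta}(\Omega))$ bound from Lemma~\ref{lemma_bounds2} and using that $L^{2\beta}_T(L^{2\beta}(\Omega))\hookrightarrow L^2_T(L^2(\Omega))$ on the finite-measure set $(0,T)\times\Omega$ for $\beta\geq 2$ (so the two candidate weak limits coincide), gives $\check\rho_\eps\weakto q$ in $L^{2\beta}_T(L^{2\beta}(\Omega))$ with $q\in L^2_T(H^1(\Omega))\cap L^{2\beta}_T(L^{2\beta}(\Omega))$, i.e.\ \eqref{wc_q}.

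The steps that are not purely mechanical are the Jensen estimate showing $L^p$-boundedness of the mean value extension and the bookkeeping needed to identify the limit spaces, namely the observation that the $\eps$-weighted summands in the decompositions of $\hat P(\rho_\eps)$ and $\check\rho_\eps$ vanish strongly in $L^2_T(L^2(\Omega))$, so that the weak limits in \eqref{wc_P} and \eqref{wc_q} really lie in $L^2_T(H^1(\Omega))$ and not merely in $L^2_T(L^2(\Omega))$.
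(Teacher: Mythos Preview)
Your proposal is correct and follows the same route as the paper's proof, which simply invokes the a priori bounds of Lemmas~\ref{lemma_bounds1} and~\ref{lemma_bounds2}, weak compactness in reflexive spaces, and Lemma~\ref{lemma_extension} for \eqref{wc_tilderho}. You have in fact supplied more detail than the paper does: the Jensen argument for the $L^2$-boundedness of the mean value extension and the observation that the $\eps$-weighted summands vanish strongly (so that $\bar P$ and $q$ inherit the $H^1$ regularity) are both left implicit in the original.
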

\begin{proof}
	With the a priori estimates of Lemma \ref{lemma_bounds1} and Lemma \ref{lemma_bounds2} we can use the weak compactness of the unit sphere in the respective reflexive function spaces. So after repeatedly extracting subsequences we have the existence of the asserted weak limits. Note that \eqref{wc_P}, \eqref{wc_q} do not provide strong convergence, as we have sufficient regularity in space but not in time. Lastly \eqref{wc_tilderho} follows from \eqref{wc_hatrho} by Lemma \ref{lemma_extension}.
\end{proof}

We can use these weak convergences to establish strong convergence:
\begin{lemma}
	\label{lemma_sc}
	For the weak limits of Corollary \ref{cor_wc} it holds $\rho = q = P^{-1}(\bar P)$. Furthermore we have
	\begin{enumerate}
		\item \label{lemma_sc_i1}$\hat \rho_\eps \to \rho$ in $L^2_T(L^2)$,
		\item \label{lemma_sc_i2}$\check \rho_\eps \to \rho$ in $L^r_T(L^r)$ for any $r \in [1, 2\beta)$,
		\item \label{lemma_sc_i3}$\nabla(\phi\ast_\eps \rho_\eps) \to \theta\nabla(\phi \ast_0 \rho)$ in $L^r_T(L^r)$ for any $r \in [1,\infty)$,
		\item \label{lemma_sc_i4}$\hat P(\rho_\eps) \to \bar P$ in $L^r_T(L^r)$ for any $r\in [1, 2)$.
	\end{enumerate}
\end{lemma}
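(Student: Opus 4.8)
The plan is to upgrade the weak convergences of Corollary~\ref{cor_wc} to the claimed strong ones by combining a compensated--compactness (div--curl type) argument---which identifies the weak limit of the ``energy product'' $\rho_\eps P(\rho_\eps)$---with the convexity of $P$ and of $f(r)=r P^{-1}(r)$ built into Definition~\ref{def_pressure}. Concretely I would proceed in five steps: (i) extract time compactness of the zero extension $\tilde\rho_\eps$ from the continuity equation; (ii) combine it with the spatial regularity of $\hat P(\rho_\eps)$ from Lemma~\ref{lemma_bounds2} to obtain $\widehat{\rho_\eps P(\rho_\eps)}\weakto\rho\bar P$; (iii) use Jensen's inequality together with this identity to squeeze the various weak limits and conclude $\rho=q=P^{-1}(\bar P)$, hence $\bar P=P(\rho)$; (iv) use the uniform convexity $P''\geq\alpha$ (or, equivalently, the monotonicity $P'\geq\alpha$) to turn this into $\hat\rho_\eps\to\rho$ strongly in $L^2_T(L^2)$, i.e.\ item~\ref{lemma_sc_i1}; (v) deduce items~\ref{lemma_sc_i2}, \ref{lemma_sc_i3}, \ref{lemma_sc_i4} by composing with the Lipschitz map $P^{-1}$, interpolating against the uniform $L^{2\beta}$--bound, and invoking Lemma~\ref{lemma_conv1}.

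For step (i), Lemma~\ref{lemma_masmoudi1} gives $\eps^2\partial_t\tilde\rho_\eps=-\div(\tilde\rho_\eps\tilde u_\eps)$ in $\mathcal D'((0,T)\times\RR^N)$; since Lemma~\ref{lemma_bounds1} bounds $\tilde u_\eps/\eps^2$ in $L^2_T(L^2)$ and $\tilde\rho_\eps$ in $L^\infty_T(L^2)$, the flux $\tilde\rho_\eps\,\tilde u_\eps/\eps^2$ is bounded in $L^2_T(L^1)$, the $\eps^2$ cancels, and $\partial_t\tilde\rho_\eps$ is bounded in $L^2_T(W^{-1,1}(\RR^N))$---so $\tilde\rho_\eps$ is relatively compact in time in a negative Sobolev space. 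For step (ii), using the $\eps$--weighed sum bound of Lemma~\ref{lemma_bounds2} write $\hat P(\rho_\eps)=a_\eps+\eps b_\eps$ with $\{a_\eps\}$ bounded in $L^2_T(H^1(\Omega))$ (hence compact in space) and $\{b_\eps\}$ bounded in $L^2_T(L^2(\Omega))$, so $\eps b_\eps\to0$ and $a_\eps\weakto\bar P$. The classical lemma on the product of two weakly convergent sequences---one compact in time in $W^{-1,1}$, the other in a positive Sobolev scale in space, with compatible integrability exponents (cf.\ \cite{lions,FeireislNovotny})---yields $\tilde\rho_\eps\,\hat P(\rho_\eps)\weakto(\theta\rho)\,\bar P$ in $L^1((0,T)\times\Omega)$; since $\tilde\rho_\eps\,\hat P(\rho_\eps)=\widetilde{\rho_\eps P(\rho_\eps)}$, Lemma~\ref{lemma_extension} turns this into $\widehat{\rho_\eps P(\rho_\eps)}\weakto\rho\bar P$.

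For step (iii), on each solid grain the mean--value extension is a spatial average, so Jensen's inequality applied to the convex functions $P$ and $f$ and to the concave $P^{-1}$ (requirements~\ref{def_pressure_i3}, \ref{def_pressure_i4}) yields the a.e.\ inequalities $P(\hat\rho_\eps)\leq\hat P(\rho_\eps)$, $f(\hat P(\rho_\eps))\leq\widehat{\rho_\eps P(\rho_\eps)}$, $\hat\rho_\eps\leq\check\rho_\eps$ on $\Omega$, with equality on $\Omega_\eps$. Taking weak limits against $\zeta\geq0$ and using weak lower semicontinuity of $v\mapsto\int\zeta P(v)$ and $v\mapsto\int\zeta f(v)$ (all integrands being uniformly integrable by the growth in requirement~\ref{def_pressure_i5}) gives, on the one hand, $P^{-1}(\bar P)\geq q\geq\rho$, and on the other, from $\int\zeta\rho\bar P=\lim\int\zeta\widehat{\rho_\eps P(\rho_\eps)}\geq\liminf\int\zeta f(\hat P(\rho_\eps))\geq\int\zeta f(\bar P)=\int\zeta\bar P\,P^{-1}(\bar P)$, the reverse bound $\rho\geq P^{-1}(\bar P)$. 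Hence $\rho=q=P^{-1}(\bar P)$ and $\bar P=P(\rho)$. The same chain of equalities forces $\int\zeta P(\hat\rho_\eps)\to\int\zeta P(\rho)$; combining this with $\hat\rho_\eps\weakto\rho$ (in $L^{2\beta}_T(L^{2\beta})$ as well) and the quantitative convexity $P(b)\geq P(a)+P'(a)(b-a)+\tfrac{\alpha}{2}(b-a)^2$ from requirement~\ref{def_pressure_i3} yields $\int\zeta(\hat\rho_\eps-\rho)^2\to0$ for all $\zeta\geq0$, i.e.\ item~\ref{lemma_sc_i1}. (Alternatively one can run Minty's monotonicity trick directly on $(0,T)\times\Omega_\eps$, using only $P'\geq\alpha$, step~(ii) and the weak limits above, to reach the same conclusion.)

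Finally, item~\ref{lemma_sc_i1} amounts to $\rho_\eps\to\rho$ in $L^2((0,T)\times\Omega_\eps)$; with $|P(a)-P(b)|\leq C(1+a^{\beta-1}+b^{\beta-1})|a-b|$, Hölder's inequality and the uniform $L^{2\beta}$--bound on $\rho_\eps=\check\rho_\eps|_{\Omega_\eps}$, one gets $P(\rho_\eps)\to P(\rho)$ in $L^s((0,T)\times\Omega_\eps)$ for $s<2$; pushing this through the mean--value extension (and using that the mean--value extension of the fixed function $P(\rho)\in L^2_T(H^1)$ converges strongly to $P(\rho)$) gives $\hat P(\rho_\eps)\to\bar P=P(\rho)$ in $L^r_T(L^r)$ for $r<2$, which is item~\ref{lemma_sc_i4}. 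Then $\check\rho_\eps=P^{-1}(\hat P(\rho_\eps))\to P^{-1}(\bar P)=\rho$ in $L^s$ for $s<2$ ($P^{-1}$ being globally Lipschitz by requirement~\ref{def_pressure_i3}), and interpolating with the $L^{2\beta}_T(L^{2\beta})$--bound of Lemma~\ref{lemma_bounds2} upgrades this to convergence in every $L^r_T(L^r)$, $r<2\beta$---item~\ref{lemma_sc_i2}---and in particular identifies the weak limit $q$ with $\rho$. Item~\ref{lemma_sc_i3} is Lemma~\ref{lemma_conv1}: writing $\phi\ast_\eps\rho_\eps=\phi\ast\tilde\rho_\eps+\rho_s\bigl(1-\phi\ast\mathbf 1_{\Omega_\eps}\bigr)$ on $\Omega$ and using $\tilde\rho_\eps\weakto\theta\tilde\rho$, $\mathbf 1_{\Omega_\eps}\overset{\ast}\weakto\theta\mathbf 1_\Omega$, convolution against the fixed smooth kernel upgrades weak to strong ($C^1$) convergence while the constant $\rho_s$ drops out under $\nabla$, leaving $\nabla(\phi\ast_\eps\rho_\eps)\to\theta\nabla(\phi\ast_0\rho)$ in $L^r_T(L^r)$ for every $r<\infty$. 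The main obstacle is step~(ii): neither $\rho_\eps$ (no spatial regularity) nor $P(\rho_\eps)$ (no temporal regularity) is compact on its own, so the whole argument hinges on the div--curl mechanism made possible by the favourable $\eps^2$--scaling of the continuity equation, together with the careful bookkeeping of the oscillating indicator $\mathbf 1_{\Omega_\eps}$ via the zero-- and mean--value extensions of Lemma~\ref{lemma_extension}.
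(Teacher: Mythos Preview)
Your proposal is correct and follows the same overall strategy as the paper: the heart of both arguments is the compensated--compactness step (your step~(ii), the paper's appeal to Lemma~5.1 of \cite{lions}), using the $W^{-1,1}$--in--time bound on $\partial_t\tilde\rho_\eps$ from the continuity equation together with the $H^1$--in--space part of $\hat P(\rho_\eps)$ from Lemma~\ref{lemma_bounds2}, to obtain $\widehat{\rho_\eps P(\rho_\eps)}\weakto\rho\bar P$; this is then combined with the Jensen/convexity sandwich involving $P$ and $f(r)=rP^{-1}(r)$ to conclude $\rho=q=P^{-1}(\bar P)$.

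The only genuine tactical differences are downstream of this identification. For item~\ref{lemma_sc_i1} you squeeze $P(\hat\rho_\eps)$ between $P(\rho)$ (weak lower semicontinuity) and $\hat P(\rho_\eps)\weakto\bar P=P(\rho)$, and then invoke the uniform convexity $P''\geq\alpha$ to extract $\hat\rho_\eps\to\rho$ in $L^2$; the paper instead runs a \emph{second} application of Lions' lemma to the pair $(\tilde\rho_\eps,\check\rho_\eps)$, obtaining $\widehat{\rho_\eps^2}\weakto\rho q=\rho^2$ and hence norm convergence of $\hat\rho_\eps$ directly. For the remaining items the paper reverses your order: it proves~\ref{lemma_sc_i2} first (from $0\leq\hat\rho_\eps\leq\check\rho_\eps$ and $q=\rho$ one has $\|\check\rho_\eps-\hat\rho_\eps\|_{L^1}\to0$, then interpolate with the $L^{2\beta}$--bound) and only then~\ref{lemma_sc_i4}, exploiting the identity $\hat P(\rho_\eps)=P(\check\rho_\eps)$ which already lives on all of $\Omega$ and so bypasses your step of pushing the convergence through the mean--value extension. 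Both routes are comparable in length; the paper's avoids checking the integrability of $P'(\rho)$ needed for your convexity argument, while yours avoids the second div--curl step.
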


\begin{proof}
	Let us first show $\rho = q = P^{-1}(\bar P)$. To do so we use the convexity of $P$ following from requirement \ref{def_pressure_i3} of Definition \ref{def_pressure}. Then by Jensen's inequality
	\begin{align*}
	P(\check \rho_\eps) = \hat P(\rho_\eps) \geq P(\hat \rho_\eps) .
	\end{align*}
	Because $P$ is increasing by requirement \ref{def_pressure_i3} of Definition \ref{def_pressure} we have 
	\begin{align}
	\label{eqn_rhohatcheck}
	\check \rho_\eps \geq \hat \rho_\epsilon .
	\end{align}
	Passing to the weak limit  yields
	\begin{align}
	\label{qgeqrho}
	q\geq \rho .
	\end{align}
	We now use Lemma \ref{lemma_weak_leq} with the convex function $P(|\cdot|)$, the sequence $\check \rho_\eps$ and the domain $(0,T)\times\Omega$. In the lemma we set $a = 2\beta$ and $b=\beta$. The lemma implies that the weak limit of $\hat P(\rho_\eps) = P(\check\rho_\eps)$ is bigger or equal to $P(q)$. In other words $\bar P \geq P(q)$. As $P^{-1}$ is increasing by requirement \ref{def_pressure_i3} of Definition \ref{def_pressure} we conclude
	\begin{align}
	\label{barpgeqq}
	P^{-1}(\bar P) \geq q .
	\end{align}	
	We note that Lemma \ref{lemma_masmoudi1} implies together with our a priori bounds  that $\partial_t \tilde \rho_\eps$ is uniformly bound in $L^1_T(W^{-1,1}(\Omega))$. Together with the higher spatial regularity of $\hat P(\rho_\eps)$ this is enough to deduce another weak limit: Using \eqref{wc_tilderho}, \eqref{wc_P} and Lemma 5.1 of \cite{lions} we get
	\begin{align*}
	\tilde \rho_\eps \hat P(\rho_\eps) \weakto \theta \rho \bar P,
	\end{align*}
	weakly in $L^1_T(L^1)$. By Lemma \ref{lemma_extension} we have
	\begin{align*}
	\widehat{\rho_\eps P(\rho_\eps)} \weakto \rho \bar P .
	\end{align*}
	Recall that by requirement \ref{def_pressure_i4} of Definition \ref{def_pressure} the function $f(r) = r \cdot P^{-1}(r)$ is convex. Because of this we can use Jensen's inequality
	\begin{align}
	\label{eqn_fhat}
	\widehat{\rho_\eps P(\rho_\eps)} = \widehat{f(P(\rho_\eps))} \geq f\left(\hat P(\rho_\eps)\right) .
	\end{align}
	Using Definition \ref{def_pressure} we can easily see that $0\leq f'(r) \leq 2\frac{r}{\alpha}$. Therefore $f$ can be bound by a quadratic function and with Lemma \ref{lemma_bounds2} we have a uniform bound on $\{f(\hat P(\rho_\eps))\} \subset L^2_T(L^1)$. Let us again pass to a subsequence of $\eps \to 0$ to guarantee the existence of a weak limit of $f(\hat P(\rho_\eps))$ in $L^2_T(L^1)$. Now we use Lemma \ref{lemma_weak_leq} with the convex function $f(|\cdot|)$ (and corresponding $b = 2$) and the sequence $\hat P(\rho_\eps)$ (thus $a= 2$ in the lemma). We conclude that the weak limit of $f(\hat P(\rho_\eps))$ is greater or equal to $f(\bar P)$. Together with \eqref{eqn_fhat} we get in the limit $\eps \to 0$
	\begin{align*}
	\rho \bar P \geq f(\bar P) = \bar P \cdot P^{-1}(\bar P) .
	\end{align*}
	As $P^{-1}(0)=0$ by requirement \ref{def_pressure_i2} of Definition \ref{def_pressure}, we have $\rho \geq P^{-1}(\bar P)$. Now together with \eqref{qgeqrho} and \eqref{barpgeqq} we conclude $\rho = q = P^{-1}(\bar P)$.

	To get strong convergence, we use again Lemma 5.1 of \cite{lions}, this time with $\tilde \rho_\eps$ and $\check \rho_\eps$. We conclude $\tilde \rho_\eps \check \rho_\eps \weakto \theta \rho q$. Note that $\check \rho_\eps = \tilde \rho_\eps$ whenever $\tilde \rho_\eps\neq 0$. Therefore by Lemma \ref{lemma_extension}
	\begin{align*}
	\widehat{\rho_\eps^2} \weakto \rho q = \rho^2 .
	\end{align*}
	By Jensen's inequality $(\hat \rho_\eps)^2 \leq \widehat{\rho_\eps^2}$ and therefore the previous statement implies norm--convergence $\norm{\hat \rho_\eps}_{L^2_T(L^2)} \to \norm{\rho}_{L^2_T(L^2)}$. Together with the weak convergence we deduce assertion \ref{lemma_sc_i1}, i.e. $\hat \rho_\eps \to \rho$ in $L^2_T(L^2)$.
	
	Next, recall from \eqref{eqn_rhohatcheck} that we have $0\leq \hat \rho_\eps \leq \check \rho_\eps$. Thus
	\begin{align*}
	\norm{\check \rho_\eps - \hat \rho_\eps}_{L^1_T(L^1)} = \int_{(0,T)\times\Omega} (\check \rho_\eps - \hat \rho_\eps) \diff x \diff t \to \int_{(0,T)\times\Omega} (q - \rho) \diff x \diff t = 0 .
	\end{align*}
	Together with the strong convergence of $\hat \rho_\eps$ this implies $\check \rho_\eps \to \rho$ in $L^1_T(L^1)$. By Lemma \ref{lemma_bounds2} the extended density $\check \rho_\eps$ is uniformly bounded in $L^{2\beta}_T(L^{2\beta})$, so interpolation between different $L^p$-norms gives assertion \ref{lemma_sc_i2}.
	
	We will also need the convergence of the term $\nabla(\phi\ast_\eps \rho_\eps)$. For any $1\leq r < \infty$ we get with Lemma \ref{lemma_conv1}
	\begin{align*}
	\begin{split}
	&\norm{\nabla(\phi\ast_\eps \rho_\eps) - \theta\nabla(\phi \ast_0 \rho)}_{L^r_T(L^r)} \\
	&\qquad\leq \norm{\nabla(\phi\ast_\eps \rho_\eps) - \nabla(\phi \ast_\eps \rho)}_{L^r_T(L^r)}  + 
	\norm{\nabla(\phi\ast_\eps \rho) - \theta\nabla(\phi \ast_0 \rho)}_{L^r_T(L^r)}\\
	&\qquad\leq C \norm{\nabla\phi}_{L^\infty}\norm{\rho_\eps-\rho}_{L^1_T(L^1(\Omega))}+\norm{\nabla(\phi\ast_\eps \rho) - \theta\nabla(\phi \ast_0 \rho)}_{L^r_T(L^r)} \\
	&\qquad\to  0 .
	\end{split}
	\end{align*}
	We have shown assertion \ref{lemma_sc_i3}.
	
	Now we can consider the convergence of $\hat P(\rho_\eps)$. For this we make a similar argument as in Lemma \ref{lemma_weak_leq}. From requirement \ref{def_pressure_i5} of Definition \ref{def_pressure} we can deduce that there are some constants $C_1$, $C_2$ such that $|P'(r)| \leq C_1 + C_2 |r|^{\beta-1}$ for all $r\geq 0$. We then calculate
	\begin{align*}
	&\norm{\hat P(\rho_\eps)-\bar P}_{L^1_T(L^1)} = \norm{P(\check \rho_\eps)-P(\rho)}_{L^1_T(L^1)} \\
	&\qquad\leq \int_{(0,T)\times\Omega} \left| \check \rho_\eps(x) - \rho(x)\right| \max_{y\in [\check \rho_\eps(x), \rho(x)]} |P'(y)| \diff x \diff t \\
	&\qquad\leq \int_{(0,T)\times\Omega} \left| \check \rho_\eps(x) - \rho(x)\right| \left(C_1 + C_2 \max(\check \rho_\eps(x),\rho(x))^{\beta-1}\right) \diff x \diff t \\
	&\qquad\leq C \norm{\check \rho_\eps - \rho}_{L^2_T(L^2)} \left(1 + \norm{\check \rho_\eps^{\beta-1}}_{L^2_T(L^2)} + \norm{\rho^{\beta-1}}_{L^2_T(L^2)}\right) .
	\end{align*}
	The first term tends to zero while the second term is uniformly bounded. We conclude $\hat P(\rho_\eps) \to \bar P$ in $L^1_T(L^1)$. Using Lemma \ref{lemma_bounds2} and the interpolation between $L^p$-norms we get assertion \ref{lemma_sc_i4}.
\end{proof}

\subsection{The Limit System}
To obtain a Cahn--Hilliard system for $\rho$ and $u$ we basically want to use $\rho v^\eps_k$ as a test function in the momentum equation \eqref{weak_mom}, and $\rho u_\eps$ as a test function in the equation satisfied by $v^\eps_k$ (see \eqref{eqn_pdevi} and \eqref{definition_vq}). Note that in \eqref{definition_vq} we did define $q^\eps_k$ not in $\Omega_\eps$ but only in the smaller domain $\Omega_{K,\eps}$. 

To handle the boundary, we multiply our test functions by some $\psi \in C^\infty_C((0,T)\times \Omega)$. We find a compact set $M\subset \Omega$ with $\supp(\psi(t))\subset M$ for all $t\in (0,T)$. Then $\dist(M,\partial \Omega)>0$ and $M \subset \Omega_{K,\eps}$ for $\eps$ small enough.

Furthermore we need a regularization of $\hat P(\rho_\eps)$: Let $\chi$ be a standard mollifier function 
and let $\chi_\eta(x) = \eta^{-N}\chi(x/\eta)$ for $\eta > 0$. For $\eta < \dist(M,\partial \Omega)$ we define 
\begin{align*}
P_{\eps,\eta} := \chi_\eta \ast \hat P(\rho_\eps),\; P_{\eta} := \chi_\eta \ast P(\rho) \in C^\infty(M).
\end{align*}
Because of Lemma \ref{lemma_bounds2} we can write $\hat P(\rho_\eps) =  f_\eps + \eps g_\eps$ where $f_\eps \in L^2_T(H^1)$ and $g_\eps \in L^2_T(L^2)$ are uniformly bounded. Using the regularity of $f_\eps$ we get with Lemma \ref{lemma_mol}
\begin{align}
\begin{split}
\label{bounds_hatP-Peta}
\norm{\hat P(\rho_\eps) - P_{\eps,\eta}}_{L^2_T(L^2(M))} &\leq \norm{f_\eps -\chi_\eta \ast f_\eps}_{L^2_T(L^2(M))} + 2\eps\norm{g_\eps}_{L^2_T(L^2(\Omega))} \\
&\leq C \eta \norm{f_\eps}_{L^2_T(H^1(\Omega))} + 2\eps\norm{g_\eps}_{L^2_T(L^2(\Omega))} \\
&\leq C(\eta + \eps) .
\end{split}
\end{align}
Recall that from \ref{lemma_sc} we have $\hat P(\rho_\eps) \to \hat P(\rho)$ in $L^r_T(L^r)$ for $r\in [1, 2)$.
In the limit $\eps \to 0$ we get with Young's convolution inequality
\begin{align}
\begin{split}
\label{sc_Peta}
\norm{\nabla P_{\eps,\eta} - \nabla P_{\eta}}_{L^r_T(L^r(M))} &= \norm{(\nabla \chi_\eta) \ast (\hat P(\rho_\eps) - \hat P(\rho))}_{L^r_T(L^r(M))} \\
&\leq \norm{\nabla \chi_\eta}_{L^1}  \norm {\hat P(\rho_\eps) - \hat P(\rho)}_{L^r_T(L^r(\Omega))}\\
&\to 0 .
\end{split}
\end{align}
Lastly, with a standard result on mollifiers we conclude
\begin{align}
\label{sc_PetaP}
\nabla P_{\eta} = \chi_\eta \ast (\nabla P(\rho)) \to \nabla P(\rho) \quad \text{in}\; L^2_T(L^2(M)) .
\end{align}
In the following we denote error terms by $\mathcal{E}_1 = \mathcal{E}_1(\eta,\eps)$ and $\mathcal{E}_2 = \mathcal{E}_2(\eta)$ if they satisfy
\begin{align*}
\limsup_{\eps\to 0} \;\left|\mathcal{E}_1(\eta,\eps)\right| \leq \mathcal{E}_2(\eta) \quad \text{and} \quad \lim_{\eta\to 0} \; \mathcal{E}_2(\eta) = 0 .
\end{align*}

\begin{lemma} We have
	\begin{align}
	\begin{split}
	\label{eqn_before_limit}
	&\int_0^T \int_{\Omega} \gamma \hat\rho_\eps \rho v^\eps_k \psi \nabla(\phi \ast_\eps \rho_\eps)  - 
	\rho v^\eps_k \psi \nabla P_{\eps,\eta} \diff x\diff t \\
	&\qquad=  \mu\int_0^T \int_{\Omega}  \rho \frac{\tilde u_\eps e_k}{\eps^2} \psi \diff x\diff t + \mathcal{E}_1(\eta,\eps) .
	\end{split}
	\end{align}
\end{lemma}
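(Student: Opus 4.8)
The plan is to obtain \eqref{eqn_before_limit} by testing the two relevant weak equations against each other and carefully tracking the boundary and regularization errors. Concretely, I would use $\rho\, v^\eps_k\, \psi$ as a test function in the momentum equation \eqref{weak_mom} for $(\rho_\eps,u_\eps)$, and simultaneously use (a suitable version of) $\rho\, u_\eps\, \psi$ as a test function in the cell problem \eqref{eqn_pdevi}--\eqref{definition_vq} satisfied by $v^\eps_k$. Since $v^\eps_k\in W^{1,\infty}(\Omega)^N$ vanishes on the scaled solid grains and $\psi$ has compact support in $\Omega$, for $\eps$ small the product $\rho\,v^\eps_k\,\psi$ is an admissible $H^1_0(\Omega_\eps)^N$ test function; the density factor $\rho\in L^2_T(H^1(\Omega))\cap L^{2\beta}_T(L^{2\beta})$ provides exactly the integrability needed for all products to make sense (using $v^\eps_k$ bounded and $\nabla v^\eps_k$ bounded by $C/\eps$, absorbed by the $\eps^2$ scaling).

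The key steps, in order: \emph{(i)} Write out \eqref{weak_mom} with test function $\rho v^\eps_k\psi$. The viscous terms $\mu D u_\eps \dbdot D(\rho v^\eps_k \psi)$ and $\xi \div u_\eps \div(\rho v^\eps_k\psi)$ are split, using the product rule, into a ``main'' part where the derivative hits $v^\eps_k$ and ``error'' parts where it hits $\rho$ or $\psi$; the latter carry a factor $\eps$ relative to $Du_\eps/\eps \in L^2_T(L^2)$ and hence go into $\mathcal{E}_1$. \emph{(ii)} For the main viscous part, substitute the equation \eqref{eqn_pdevi} for $v^\eps_k$, rescaled: $-\eps^2\Delta v^\eps_k + \eps\nabla q^\eps_k = e_k$ on $\Omega_{K,\eps}$. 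Pairing with $\rho u_\eps\psi$ and integrating by parts converts $\int \Delta v^\eps_k\cdot(\cdots)$ into $\int D v^\eps_k \dbdot D(\cdots)$, which is precisely the main viscous term, and produces $\int \rho\, \tfrac{\tilde u_\eps e_k}{\eps^2}\psi$ on the right plus a pressure contribution $\int \eps\,\nabla q^\eps_k \cdot \rho u_\eps\psi = -\int \eps\, q^\eps_k \div(\rho u_\eps\psi)$; using $\|q^\eps_k\|_{L^\infty(\Omega_{K,\eps})}\le C$, $\|u_\eps/\eps\|_{L^2_T(L^2)}\le C$ and $\rho\in L^2_T(H^1)$ this is $O(\eps)$, hence an $\mathcal{E}_1$ term. \emph{(iii)} The pressure term $-\int p(\rho_\eps)\div(\rho v^\eps_k\psi)$ together with the capillarity term on the right of \eqref{weak_mom} recombine, via the generalized pressure $P(\rho_\eps)=p(\rho_\eps)+\tfrac\gamma2\rho_\eps^2$, into $-\int \hat P(\rho_\eps)\div(\rho v^\eps_k\psi) + \int \gamma \rho_\eps \nabla(\phi\ast_\eps\rho_\eps)\rho v^\eps_k\psi$ (after passing to the zero/mean-value extensions and using that $v^\eps_k\psi$ is supported in $\Omega_{K,\eps}$ so that $\hat P(\rho_\eps)=P(\rho_\eps)$ there). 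Replacing $\hat P(\rho_\eps)$ by its mollification $P_{\eps,\eta}$ costs $\|\hat P(\rho_\eps)-P_{\eps,\eta}\|_{L^2_T(L^2(M))}\le C(\eta+\eps)$ from \eqref{bounds_hatP-Peta}, which is exactly of type $\mathcal{E}_1$; and integrating by parts the smooth $P_{\eps,\eta}$ term gives $-\int \rho v^\eps_k\psi\,\nabla P_{\eps,\eta}$ up to a further derivative-on-$\rho$-or-$\psi$ error, which is again $O(\eps)$ times bounded quantities. Finally the capillarity term is rewritten with the strong convergence $\hat\rho_\eps\to\rho$ (Lemma \ref{lemma_sc}\,\ref{lemma_sc_i1}) so that $\rho_\eps$ under the extension becomes $\hat\rho_\eps\rho$ up to an $L^1$ error controlled by $\|\nabla(\phi\ast_\eps\rho_\eps)\|_{L^\infty}\le C$; collecting everything yields \eqref{eqn_before_limit}.

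I expect the main obstacle to be the bookkeeping around the cell-problem substitution in step \emph{(ii)}: one must be careful that $q^\eps_k$ is only defined on $\Omega_{K,\eps}$, not on all of $\Omega_\eps$, so the integration by parts with $\Delta v^\eps_k$ and $\nabla q^\eps_k$ has to be localized to $\Omega_{K,\eps}$ using that $\supp\psi\subset M\subset\Omega_{K,\eps}$ for small $\eps$, and that the $\eps$-weighting of the two terms in the rescaled Stokes equation is bookkept consistently so that the ``main'' viscous term reproduces itself exactly while all remainders genuinely carry a surplus power of $\eps$. A secondary subtlety is justifying that $\rho v^\eps_k\psi\in L^2_T(H^1_0(\Omega_\eps))^N$ and that $\rho u_\eps\psi$ is an admissible test function in the (rescaled, distributional) cell equation — this needs the extra integrability $\rho\in L^{2\beta}_T(L^{2\beta})$ from Lemma \ref{lemma_bounds2} to control $\rho\,\nabla v^\eps_k$ and $\rho\,Du_\eps$. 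Once these are in place the estimate is a routine, if lengthy, application of Hölder, the uniform bounds of Lemma \ref{lemma_bounds1}, and the convergences of Lemma \ref{lemma_sc}.
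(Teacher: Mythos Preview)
Your overall strategy matches the paper's, but step \emph{(ii)} contains a genuine gap: the cell--problem pressure term does not carry the factor of $\eps$ you claim. After dividing $-\eps^2\Delta v^\eps_k + \eps\nabla q^\eps_k = e_k$ by $\eps^2$ (which you implicitly do, since your right--hand side is $\int \rho\,\tfrac{\tilde u_\eps e_k}{\eps^2}\psi$), the pressure contribution is
\[
\frac{1}{\eps}\int_0^T\!\!\int_{\Omega_{K,\eps}} \rho\, u_\eps\, \psi\cdot \nabla q^\eps_k \diff x\diff t,
\]
with the adverse prefactor $1/\eps$, not $\eps$. With the correct scaling your proposed estimate fails: integrating by parts and expanding $\div(\rho u_\eps\psi)$, the piece $\tfrac{1}{\eps}\int q^\eps_k\,\rho\psi\,\div u_\eps$ is only $O(1)$, since $\|\div u_\eps\|_{L^2_T(L^2)}\le C\eps$ from Lemma~\ref{lemma_bounds1} cancels the $1/\eps$ but leaves no surplus. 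Bounding instead directly via $\|\nabla q^\eps_k\|_{L^\infty}\le C/\eps$ and $\|u_\eps\|_{L^2_T(L^2)}\le C\eps^2$ again yields only $O(1)$.

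The paper closes this gap by a different mechanism: first replace $\rho$ by $\rho_\eps$ in the pressure integral (this is an $\mathcal{E}_1$ error by Lemma~\ref{lemma_sc}, using $\eps\|\nabla q^\eps_k\|_{L^\infty}\le C$ and $\|u_\eps/\eps^2\|_{L^2_T(L^2)}\le C$), then write $\rho_\eps u_\eps\psi\,\nabla q^\eps_k = \rho_\eps u_\eps\,\nabla(\psi q^\eps_k) - \rho_\eps u_\eps\, q^\eps_k\nabla\psi$ and invoke the \emph{continuity equation} \eqref{weak_conti} (equivalently Lemma~\ref{lemma_masmoudi1}): $\int \rho_\eps u_\eps\cdot\nabla\Psi = -\eps^2\int\rho_\eps\,\partial_t\Psi$ for admissible $\Psi$, which supplies the missing factor $\eps^2$ and makes the whole term $O(\eps)$. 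This use of mass conservation to kill the cell pressure is the essential idea missing from your sketch. The remaining parts of your plan (the product--rule splitting of the viscous cross terms, the $\hat P(\rho_\eps)\to P_{\eps,\eta}$ replacement via \eqref{bounds_hatP-Peta}, and the $\xi\,\div u_\eps$ estimate) are carried out in the paper essentially as you describe.
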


\begin{proof}
	Consider the test function $\rho v^\eps_k \psi$. This function is zero on $\partial \Omega_\eps$ as $v^\eps_k = 0$ on $\partial \Omega_\eps \setminus \partial \Omega$ and $\psi = 0$ near $\partial \Omega$. By using $\rho v^\eps_k \psi$ in the momentum equation \eqref{weak_mom} we get
	\begin{align*}
	&\int_0^T \int_{\Omega_\eps} \mu D u_\eps \dbdot D (\rho v^\eps_k \psi) \diff x \diff t \\
	&\qquad=
	\int_0^T \int_{\Omega_\eps} \gamma \rho_\eps (\rho v^\eps_k \psi) \nabla(\phi \ast_\eps \rho_\eps)  + 
	(P(\rho_\eps) -\xi \div u_\eps) \div (\rho v^\eps_k \psi) \diff x \diff t.
	\end{align*}
	We can extend the integral to $\Omega$ because $\tilde u$ and $v^\eps_k$ vanish on $\Omega \setminus \Omega_\eps$, so
	\begin{align}
	\label{eqn_lsx}
	\begin{split}
	&\int_0^T \int_{\Omega} \mu D\tilde u_\eps \dbdot D (\rho v^\eps_k \psi) \diff x \diff t \\
	&\qquad= 
	\int_0^T \int_{\Omega} \gamma \hat\rho_\eps \rho v^\eps_k \psi \nabla(\phi \ast_\eps \rho_\eps) + 
	(\hat P(\rho_\eps) -\xi \div \tilde u_\eps )\div (\rho v^\eps_k \psi) \diff x \diff t.
	\end{split}
	\end{align}
	Next, we estimate with \eqref{bounds_vi} and \eqref{bounds_hatP-Peta}
	\begin{align}
	\begin{split}
	\label{eqn_hatP-Peta}
	&\left|\int_0^T \int_{\Omega}(\hat P(\rho_\eps)-P_{\eps,\eta}) \div (\rho v^\eps_k \psi) \diff x \diff t \right| \\
	&\qquad=\left|\int_0^T \int_{\Omega}(\hat P(\rho_\eps)-P_{\eps,\eta}) (v^\eps_k \psi \nabla \rho + v^\eps_k \rho \nabla \psi)\diff x \diff t\right| \\
	&\qquad\leq\norm{\hat P(\rho_\eps) - P_{\eps,\eta}}_{L^2_T(L^2(M))} \norm{v^\eps_k}_{L^\infty(\Omega)}\norm{\rho}_{L^2_T(H^1(\Omega))} \\
	&\qquad\qquad\cdot \left(\norm{\psi}_{L^\infty_T(L^\infty)}+\norm{\nabla \psi}_{L^\infty_T(L^\infty)}\right) \\
	&\qquad\leq C(\eta + \eps) . 
	\end{split}
	\end{align}
	With the same argument and Lemma \ref{lemma_bounds1} we can also estimate
	\begin{align}
	\begin{split}
	\label{eqn_xi_div_ueps}
	&\left|\int_0^T \int_{\Omega}\xi \div \tilde u_\eps \div (\rho v^\eps_k \psi) \diff x \diff t \right| \\
	&\qquad=\left|\int_0^T \int_{\Omega}\xi \div \tilde u_\eps (v^\eps_k \psi \nabla \rho + v^\eps_k \rho \nabla \psi)\diff x \diff t \right| \end{split}\\
	&\qquad\leq \eps \xi \norm{\frac{u_\eps}{\eps}}_{L^2(H^1)} \norm{v^\eps_k}_{L^\infty}\norm{\rho}_{L^2_T(H^1)} \left(\norm{\psi}_{L^\infty_T(L^\infty)}+\norm{\nabla \psi}_{L^\infty_T(L^\infty)}\right) \notag\\
	&\qquad\leq C \eps . \notag
	\end{align}
	With this notation we can now use the estimates \eqref{eqn_hatP-Peta} and \eqref{eqn_xi_div_ueps} in equation \eqref{eqn_lsx} to obtain
	\begin{align}
	\begin{split}
	&\int_0^T \int_{\Omega} \mu D \tilde u_\eps \dbdot D (\rho v^\eps_k \psi) \diff x \diff t
	\\
	&\qquad= \int_0^T \int_{\Omega} \gamma \hat\rho_\eps \rho v^\eps_k \psi \nabla(\phi \ast_\eps \rho_\eps)  + 
	P_{\eps,\eta} \div (\rho v^\eps_k \psi) \diff x \diff t + \mathcal{E}_1(\eta,\eps) 
	\\
	&\qquad= \int_0^T \int_{\Omega} \gamma \hat\rho_\eps \rho v^\eps_k \psi \nabla(\phi \ast_\eps \rho_\eps)  - 
	\rho v^\eps_k \psi \nabla P_{\eps,\eta} \diff x \diff t + \mathcal{E}_1(\eta,\eps) .
	\label{ls1}
	\end{split}
	\end{align}
	Now we look at the differential equation satisfied by $v^\eps_k$: After rescaling \eqref{eqn_pdevi} gives us
	\begin{align*}
	-\eps^2 \Delta v^\eps_k + \eps \nabla q^\eps_k = e_k \quad \text{in}\;\; \Omega_{K,\eps} .
	\end{align*}
	Let us use the test function $\rho u_\eps \psi$. The test function is zero on $\partial \Omega_{K,\eps}$ as $u_\eps = 0$ on $\partial \Omega_\eps$ and $\psi = 0$ near $\partial \Omega$. We get
	\begin{align}
	\label{ls2x}
	\begin{split}
	&\int_0^T \int_{\Omega_{K,\eps}} D v^\eps_k \dbdot D (\rho u_\eps \psi) \diff x \diff t
	+ \frac{1}{\eps} \int_0^T \int_{\Omega_{K,\eps}} \rho u_\eps \psi \nabla q^\eps_k \diff x \diff t \\
	&\qquad= \int_0^T \int_{\Omega_{K,\eps}} \rho \frac{u_\eps}{\eps^2} \psi e_k \diff x \diff t.
	\end{split}
	\end{align}
	The second term vanishes in the limit $\eps \to 0$, $\eta \to 0$:
	\begin{align}
	\begin{split}
	\label{eqn_nablaqk}
	&\frac{1}{\eps} \int_0^T \int_{\Omega_{K,\eps}} \rho u_\eps \psi \nabla q^\eps_k \diff x \diff t\\
	&\qquad= \eps\int_0^T \int_{\Omega_{K,\eps}} \rho_\eps \frac{u_\eps}{\eps^2} \psi \nabla q^\eps_k  \diff x \diff t+ \mathcal{E}_1(\eta,\eps)
	\\
	&\qquad=  \eps\int_0^T \int_{\Omega_{K,\eps}} \rho_\eps \frac{u_\eps}{\eps^2} \nabla(\psi  q^\eps_k)  
	-  \rho_\eps \frac{u_\eps}{\eps^2}   q^\eps_k \nabla\psi \diff x \diff t
	+ \mathcal{E}_1(\eta,\eps)\\
	&\qquad= \mathcal{E}_1(\eta, \eps) .
	\end{split}
	\end{align}
	Here we have used the following estimates that are based on equation \eqref{bounds_qi}, Lemma \ref{lemma_bounds1}, Lemma \ref{lemma_sc}, and the use of \eqref{weak_conti}:
	
	\begin{align*}
	&\left|\eps\int_0^T \int_{\Omega_{K,\eps}} (\rho_\eps - \rho) \frac{u_\eps}{\eps^2} \psi \nabla q^\eps_k \diff x\diff t\right| \\
	&\qquad \leq \norm{\rho_\eps - \rho}_{L^2(L^2(\Omega_{K,\eps}))} \norm{\frac{u_\eps}{\eps^2}}_{L^2(L^2)}  \norm{\psi}_{L^\infty_T(L^\infty)} \norm{\eps\nabla q^\eps_k}_{L^\infty} ,\\
	&\left|\eps\int_0^T \int_{\Omega_{K,\eps}} \rho_\eps \frac{u_\eps}{\eps^2}   q^\eps_k \nabla\psi \diff x\diff t \right| \leq \eps \norm{\rho_\eps}_{L^2(L^2)} \norm{\frac{u_\eps}{\eps^2}}_{L^2(L^2)}  \norm{\nabla \psi}_{L^\infty_T(L^\infty)} \norm{q^\eps_k}_{L^\infty} , \\
	&\left|\eps\int_0^T \int_{\Omega_{K,\eps}} \rho_\eps \frac{u_\eps}{\eps^2} \nabla(\psi  q^\eps_k) \diff x\diff t \right| 
	= \left|-\eps\int_0^T \int_{\Omega_{K,\eps}} \rho_\eps \partial_t(\psi  q^\eps_k) \diff x\diff t \right| \\
	&\qquad\leq \eps \norm{\rho_\eps}_{L^2(L^2)} \norm{\partial_t \psi}_{L^\infty_T(L^\infty)} \norm{q^\eps_k}_{L^\infty} .
	\end{align*}
	Using \eqref{eqn_nablaqk} in \eqref{ls2x} and extending the integral from $\Omega_{K,\eps}$ to $\Omega$ we get
	\begin{align}
	\label{ls2}
	\int_0^T \int_{\Omega} D v^\eps_k \dbdot D(\rho \tilde u_\eps \psi) \diff x\diff t
	= \int_0^T \int_{\Omega}  \rho \frac{\tilde u_\eps e_k}{\eps^2} \psi \diff x\diff t + \mathcal{E}_1(\eta,\eps) .
	\end{align}
	We calculate
	\begin{align}
	\begin{split}
	&\int_0^T \int_\Omega D \tilde u_\eps \dbdot D (\rho v^\eps_k \psi) \diff x\diff t 
	- \int_0^T \int_{\Omega}  D v^\eps_k \dbdot D(\rho \tilde u_\eps \psi) \diff x\diff t\\
	&\qquad=\int_0^T \int_\Omega \big(
	\rho \psi D\tilde u_\eps \dbdot D v^\eps_k
	+ \psi  v^\eps_k      \cdot D \tilde u_\eps \nabla \rho 
	+\rho  v^\eps_k      \cdot D \tilde u_\eps \nabla \psi \\
	&\qquad\qquad \qquad\quad
	-\rho \psi D v^\eps_k \dbdot D\tilde u_\eps
	-\psi  \tilde u_\eps \cdot D v^\eps_k      \nabla \rho
	-\rho  \tilde u_\eps \cdot D v^\eps_k      \nabla \psi \big) \diff x\diff t \\
	&\qquad=\int_0^T \int_\Omega
	\left( v^\eps_k \cdot D \tilde u_\eps -  \tilde u_\eps \cdot D v^\eps_k\right) 
	\left(\psi \nabla \rho + \rho\nabla \psi \right) \diff x\diff t\\
	&\qquad=\mathcal{E}_1(\eta,\eps) ,
	\label{ls3}
	\end{split}
	\end{align}
	where we have used equation \eqref{bounds_vi}, Lemma \ref{lemma_bounds1} and the estimate
	\begin{align*}
	& \left|\int_0^T \int_\Omega
	\left( v^\eps_k \cdot D \tilde u_\eps -  \tilde u_\eps \cdot D v^\eps_k\right) 
	\left(\psi \nabla \rho + \rho\nabla \psi \right) \diff x\diff t \right|\\
	&\qquad \leq \eps \left(\norm{\frac{\tilde u_\eps}{\eps^2}}_{L^2(L^2)} \norm{\eps D v^\eps_k}_{L^\infty}+
	\norm{\frac{D \tilde u_\eps}{\eps}}_{L^2(L^2)} \norm{v^\eps_k}_{L^\infty} \right) \\
	&\qquad \quad \cdot 
	\left(\norm{\psi}_{L^\infty_T(L^\infty)}\norm{\nabla \rho}_{L^2(L^2)}
	+ \norm{\nabla \psi}_{L^\infty_T(L^\infty)}\norm{\rho}_{L^2(L^2)}\right).
	\end{align*}
	Now we can combine \eqref{ls1}, \eqref{ls2} and \eqref{ls3} to get the assertion of the lemma.
\end{proof}
To take the limit $\eps\to 0$ in equation \eqref{eqn_before_limit}, let us first summarize the convergences as proven up to now:
\begin{corollary} \label{cor_conv}
	We have
	\begin{align*}
	\hat \rho_\eps &\to \rho &&\text{in} \; L^2_T(L^2(\Omega)) ,
	\\
	v^\eps_k &\overset{\ast}\weakto \int_{\Ys} \tilde v_k = \bar A e_k  &&\text{in} \; L^\infty(\Omega)=L^1(\Omega)^\ast 
	,\\
	\qquad \nabla(\phi \ast_\eps \rho_\eps) &\to \theta \nabla(\phi \ast_0 \rho)  &&\text{in} \; L^r_T(L^r(\Omega))\; \text{for any}\; 1\leq r < \infty
	,\\
	\nabla P_{\eps,\eta} &\to  \nabla P_{\eta} &&\text{in} \; L^r_T(L^r(M))\; \text{for any}\; 1\leq r < 2
	,\\
	\frac{\tilde u_\eps}{\eps^2} &\weakto u &&\text{in}\; L^2_T(L^2(\Omega)) .
	\end{align*}
\end{corollary}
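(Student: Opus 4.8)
The statement is essentially a bookkeeping corollary: four of the five convergences have already been established, and only the weak-$\ast$ convergence of $v^\eps_k$ is new. The plan is therefore to collect the earlier results and to add one short, classical argument.

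First, $\hat\rho_\eps\to\rho$ in $L^2_T(L^2(\Omega))$ and $\nabla(\phi\ast_\eps\rho_\eps)\to\theta\nabla(\phi\ast_0\rho)$ in $L^r_T(L^r(\Omega))$ for every $1\le r<\infty$ are exactly assertions \ref{lemma_sc_i1} and \ref{lemma_sc_i3} of Lemma \ref{lemma_sc}. The convergence $\nabla P_{\eps,\eta}\to\nabla P_\eta$ in $L^r_T(L^r(M))$ for $1\le r<2$ has been shown in \eqref{sc_Peta}, combining assertion \ref{lemma_sc_i4} of Lemma \ref{lemma_sc} with Young's convolution inequality. Finally $\frac{\tilde u_\eps}{\eps^2}\weakto u$ in $L^2_T(L^2(\Omega))$ is the weak limit extracted in \eqref{wc_u} of Corollary \ref{cor_wc}. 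So nothing new is needed for these four.

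It remains to treat $v^\eps_k\overset{\ast}\weakto\int_\Ys\tilde v_k=\bar A e_k$ in $L^\infty(\Omega)$. The identification of the average is immediate: by the construction of $A$ in Section \ref{chapter_def} the $k$-th column of $A(x)$ is $\tilde v_k(x)$, hence $\bar A e_k=\int_\Ys A(x)e_k\diff x=\int_\Ys\tilde v_k(x)\diff x$ from the definition \eqref{eqn_barA} (and since $|\Ys|=1$ this average coincides with the cell mean). For the convergence, note that by \eqref{definition_vq} the function $v^\eps_k$ is the $\eps$-rescaling of the $\ZZ^N$-periodic continuation of the bounded function $\tilde v_k\in L^\infty(\Ys)$, and $\set{v^\eps_k}$ is bounded in $L^\infty(\Omega)$ by \eqref{bounds_vi}. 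By density of $C_c(\Omega)$ in $L^1(\Omega)$ it suffices to pass to the limit in $\int_\Omega v^\eps_k\,g\diff x$ for $g\in C_c(\Omega)$. Partitioning $\Omega$ into the cells $\eps(\Ys+j)$, $j\in\ZZ^N$, replacing $g$ on each cell by its value at the cell centre (the error being controlled by the uniform modulus of continuity of $g$ together with the bound \eqref{bounds_vi}), and discarding the $O(\eps)$-measure partial cells meeting $\partial\Omega$, the integral becomes a Riemann sum for $\bigl(\int_\Ys\tilde v_k\bigr)\int_\Omega g\diff x$. Letting $\eps\to0$ gives the claimed weak-$\ast$ convergence; this is the standard homogenization fact that a rescaled periodic function converges weakly-$\ast$ to its cell average (see, e.g., \cite{tartar}).

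I do not anticipate a genuine obstacle: the corollary merely organizes facts proven in the preceding subsections, and the only fresh ingredient is the classical weak-$\ast$ limit of $v^\eps_k$, for which the Riemann-sum argument above is sufficient. The only points requiring a little care are discarding the boundary cells — harmless since they cover a set of measure $O(\eps)$ — and checking the normalization, so that the limit is $\int_\Ys\tilde v_k$ (equivalently $\bar A e_k$) and not a rescaled variant.
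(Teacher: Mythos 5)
Your proposal is correct and follows the paper's proof exactly: the first, third, fourth and fifth convergences are read off from Lemma \ref{lemma_sc}, equation \eqref{sc_Peta} and Corollary \ref{cor_wc}, and the weak-$\ast$ convergence of $v^\eps_k$ to its cell average is the standard fact about rescaled periodic functions, which the paper simply cites and you spell out via the usual Riemann--sum argument. No gaps.
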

\begin{proof}
	See Corollary \ref{cor_wc}, Lemma \ref{lemma_sc}, and equation \eqref{sc_Peta}. The weak-$\ast$ convergence of $v^\eps_k$ defined in \eqref{definition_vq} is a standard result.
\end{proof}
Recall that $\bar A$ is the permeability matrix as defined in \eqref{eqn_barA}. Also recall that from \eqref{wc_q} and Lemma \ref{lemma_sc} we have $\rho \in L^{2\beta}_T(L^{2\beta})$. In the limit $\eps \to 0$ equation \eqref{eqn_before_limit} gives
\begin{align*}
\int_0^T \int_{\Omega} \gamma \rho^2 \psi \theta \nabla(\phi \ast_0 \rho) \bar A e_k - 
\rho \psi \nabla P_{\eta} \bar A e_k \diff x \diff t=  \mu\int_0^T \int_{\Omega}  \rho u \psi e_k \diff x \diff t + \mathcal{E}_2(\eta) .
\end{align*}
From \eqref{sc_PetaP} we know that $\nabla P_{\eta} \to \nabla P(\rho)$ in $L^2_T(L^2(M))$, so we can pass to the limit $\eta\to 0$, concluding
\begin{align*}
\int_0^T \int_{\Omega} \bar A e_k \left(\gamma \theta \rho^2 \nabla(\phi \ast_0 \rho) - 
\rho \nabla P(\rho)\right) \psi \diff x \diff t =  \mu\int_0^T \int_{\Omega}  \rho u \psi e_k \diff x \diff t .
\end{align*}
Here $\psi \in C^\infty_C((0,T)\times \Omega)$ is an arbitrary test function. Collecting the results for $k=1,\ldots, N$ we get
\begin{align*}
\bar A \left(\gamma \theta \rho^2 \nabla(\phi \ast_0 \rho) - 
\rho \nabla P(\rho)\right) = \mu \rho u .
\end{align*}
Now we can use the identity $P(\rho) = p(\rho) + \frac{\gamma}{2} \rho^2$ to restate this result in terms of $p$:
\begin{align}
\label{Result2x}
\bar A \left(\gamma \theta \rho^2 \nabla(\phi\ast_0\rho - \rho) - \rho \nabla \left(p(\rho)+\frac{\gamma (1-\theta)}{2}\rho^2 \right)\right) = \mu \rho u .
\end{align}
This directly implies \eqref{eqn_result_2x}.

We now are able to infer the weak formulation of \eqref{Result_CH}, \eqref{Result_CH_IVBP}. For this we use Lemma \ref{lemma_masmoudi1} to write
\begin{align*}
-\int_0^T \int_\Omega \tilde \rho_\eps \partial_t \psi \diff x \diff t 
- \int_0^T \int_\Omega \hat \rho_\eps \frac{\tilde u_\eps}{\eps^2} \nabla \psi \diff x \diff t 
= \int_\Omega \tilde \rho_{0,\eps} \psi(t=0) \diff x,
\end{align*}
with a test function $\psi \in C^\infty_C((-\infty,T)\times\RR^N)$. To pass to the limit $\eps \to 0$ we recall that the initial conditions satisfy $\hat \rho_{0,\eps} \weakto \rho_0$ in $L^1(\Omega)$ and with Lemma \ref{lemma_extension} we conclude $\tilde \rho_{0,\eps} \weakto \theta\rho_0$. The other convergences have been discussed in Corollary \ref{cor_conv}, we conclude
\begin{align*}
-\int_0^T \int_\Omega \theta \rho \partial_t \psi \diff x \diff t
- \int_0^T \int_\Omega \rho u \nabla \psi \diff x \diff t
= \int_\Omega \theta \rho_{0} \psi(t=0) \diff x.
\end{align*}
With this we have shown Theorem \ref{theorem}.

\appendix
\section{Appendix\label{chapter_MathAppendix}}
This appendix will provide the technical details needed in the proof of Theorem \ref{theorem}. 
First we  introduce a  restriction operator that is dual to the mean value extension in Definition \ref{def_extension}.  Similar operators have been constructed by Tartar \cite{tartar}, in particular we refer to the construction of  Lipton\&Avellaneda \cite{av}.
\begin{lemma}
	\label{lemmma_restriction}
	There exists a restriction operator $R_\eps: H^1_0(\Omega)^N  \to H^1(\Omega_\eps)^N$ such that 
	\begin{enumerate}
		\item $R_\eps u = 0$ on $\partial \Omega$ and $(R_\eps u) \cdot n = 0$ on $\partial \Omega_\eps$,
		\item $R_\eps$ satisfies
		\begin{align}
		\label{eqn_lemma_res_1}
		\int_{\Omega_\eps} u \div R_\eps v \diff x= \int_\Omega \hat u \div v \diff x \qquad \forall u \in L^2(\Omega_\eps),v\in H^1_0(\Omega)^N,
		\end{align}
		\item there exists a constant $C>0$ such that for all $v\in H^1_0(\Omega)^N$
		\begin{align}
		\label{eqn_lemma_res_2}
		\norm{R_\eps v}_{L^2(\Omega_\eps)^N} + \eps \norm{D R_\eps v}_{L^2(\Omega_\eps)^{N\times N}} \leq C \left(\norm{v}_{L^2(\Omega)^N}+\eps \norm{D v}_{L^2(\Omega)^{N\times N}}\right).
		\end{align}
	\end{enumerate}
\end{lemma}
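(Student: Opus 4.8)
The plan is to reduce the whole construction to a single $\eps$-independent auxiliary problem on the fixed annular region $\Ys_{r\setminus s}$, rescale its solution to every interior periodicity cell, and glue the pieces together. Fix $v\in H^1_0(\Omega)^N$ and, for $k\in K_\eps$, put $v^{(k)}(y):=v(\eps(y+k))$ for $y\in\Ys$. The pairing identity \eqref{eqn_lemma_res_1} together with the definition of the mean value extension dictates precisely what the local operator has to do: any modification of $v^{(k)}$ inside the fluid cell $\Ys_f$ must be supported in $\Ys_{r\setminus s}$, must cancel the normal trace of $v^{(k)}$ on $\partial\Ys_s$, and must have \emph{constant} divergence equal to $c:=|\Ys_{r\setminus s}|^{-1}\int_{\Ys_s}\div v^{(k)}$, which is exactly the value the mean value extension $\widehat{v^{(k)}}$ assigns on the solid grain. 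Accordingly I would set up the cell problem: find $z=z[v^{(k)}]\in H^1(\Ys_{r\setminus s})^N$ with $z=0$ on $\partial\Ys_r$, with $z\cdot n=-v^{(k)}\cdot n$ on $\partial\Ys_s$, with $\div z=c$ in $\Ys_{r\setminus s}$, and with $\|z\|_{H^1(\Ys_{r\setminus s})}\le C\|v^{(k)}\|_{H^1(\Ys_r)}$ for a constant $C=C(\Ys_s,\Ys_r)$. The solvability compatibility condition is automatic, since $\int_{\Ys_{r\setminus s}}\div z=\int_{\partial\Ys_s}v^{(k)}\cdot n=\int_{\Ys_s}\div v^{(k)}=|\Ys_{r\setminus s}|\,c$.

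To solve this cell problem I would split off the boundary data. Using a bounded linear right inverse of the trace operator on the fixed Lipschitz domain $\Ys_{r\setminus s}$, pick $z_1\in H^1(\Ys_{r\setminus s})^N$ with $z_1=0$ on $\partial\Ys_r$, with $z_1\cdot n=-v^{(k)}\cdot n$ on $\partial\Ys_s$, and with $\|z_1\|_{H^1}\le C\|v^{(k)}\|_{H^1}$. Since $\int_{\Ys_{r\setminus s}}(c-\div z_1)=0$, the Bogovskii operator for the divergence equation on the fixed Lipschitz domain $\Ys_{r\setminus s}$ (see e.g.\ \cite{galdi}) furnishes $z_2\in H^1_0(\Ys_{r\setminus s})^N$ with $\div z_2=c-\div z_1$ and $\|z_2\|_{H^1}\le C\|c-\div z_1\|_{L^2}\le C\|v^{(k)}\|_{H^1}$; I put $z:=z_1+z_2$. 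The cell operator is then $\mathcal Rv^{(k)}:=v^{(k)}$ on $\Ys\setminus\Ys_r$ and $\mathcal Rv^{(k)}:=v^{(k)}+z$ on $\Ys_{r\setminus s}$; since $z$ vanishes on $\partial\Ys_r$, this defines an element of $H^1(\Ys_f)^N$ that coincides with $v^{(k)}$ in a neighbourhood of $\partial\Ys$ and has vanishing normal trace on $\partial\Ys_s$. For any $u\in L^2(\Ys_f)$, with $\bar u:=|\Ys_{r\setminus s}|^{-1}\int_{\Ys_{r\setminus s}}u$, the discrepancy lives only on the ring, and $\int_{\Ys_f}u\,\div(\mathcal Rv^{(k)})-\int_{\Ys}\widehat u\,\div v^{(k)}=\int_{\Ys_{r\setminus s}}u\,\div z-\bar u\int_{\Ys_s}\div v^{(k)}=c\int_{\Ys_{r\setminus s}}u-\bar u\int_{\Ys_s}\div v^{(k)}=0$.

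Next I would assemble: for $x\in\eps(\Ys_f+k)$ with $k\in K_\eps$ set $(R_\eps v)(x):=(\mathcal Rv^{(k)})(x/\eps-k)$, and on the remaining boundary layer $\Omega_\eps\setminus\Omega_K$ (which is contained in $\Omega_\eps$ and meets no grain) set $R_\eps v:=v$. Because $\mathcal Rv^{(k)}=v^{(k)}$ near $\partial\Ys$ and $v\in H^1(\Omega)$, the local pieces have matching traces across all cell faces and across $\partial\Omega_K$, hence $R_\eps v\in H^1(\Omega_\eps)^N$; it vanishes on $\partial\Omega$ and has zero normal component on the grain boundaries, giving assertion (1). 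Summing the cell identities and using that $\widehat u=u$ on $\Omega_\eps$ (so the integrand is unchanged on $\Omega_\eps\setminus\Omega_K$, while $\Omega\setminus\Omega_K\subset\Omega_\eps$) telescopes to \eqref{eqn_lemma_res_1}. For \eqref{eqn_lemma_res_2} I would rescale the cell bound: writing $Z_\eps$ for the rescaled correction on $\eps(\Ys_{r\setminus s}+k)$, the change of variables turns $\|z[v^{(k)}]\|_{H^1}\le C\|v^{(k)}\|_{H^1(\Ys_r)}$ into $\|Z_\eps\|_{L^2(\eps(\Ys_{r\setminus s}+k))}+\eps\|DZ_\eps\|_{L^2}\le C(\|v\|_{L^2(\eps(\Ys_r+k))}+\eps\|Dv\|_{L^2(\eps(\Ys_r+k))})$, because each of $\div$ and $D$ costs a factor $\eps^{-1}$ while the volume element costs $\eps^N$; since $R_\eps v=v+Z_\eps$ on the pairwise disjoint rescaled rings and $R_\eps v=v$ off them, summing over $k$ and adding $\|v\|_{L^2(\Omega)}+\eps\|Dv\|_{L^2(\Omega)}$ produces (3) with an $\eps$-independent constant.

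The hard part is the cell step: solvability of the divergence equation on the \emph{fixed} Lipschitz annulus $\Ys_{r\setminus s}$ with a right inverse bounded uniformly in the data (Bogovskii/Ne\v{c}as), together with the correct handling of the mixed boundary condition (homogeneous on $\partial\Ys_r$, prescribed normal flux on $\partial\Ys_s$). It is exactly because the geometry of $\Ys_{r\setminus s}$ is fixed, not depending on $\eps$, that the constant in (3) is uniform. A secondary technical point is the $H^1$-conformity of the reassembled field across cell interfaces and across $\partial\Omega_K$, which is the reason the cell operator is required to leave $v$ unmodified near $\partial\Ys$; here one uses that $\Ys_r$ may be chosen with $\overline{\Ys_r}\subset\Ys$, which is possible since $\Ys_s$ is a closed subset of the open cell $\Ys$.
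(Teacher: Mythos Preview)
Your construction is correct and follows the same architecture as the paper's proof: build a local operator on the fixed annulus $\Ys_{r\setminus s}$ that kills the normal trace on $\partial\Ys_s$ and adds a constant to the divergence, verify the duality identity against the mean value extension cell by cell, then rescale and glue. The only substantive difference lies in the tool used for the cell problem. The paper invokes Lemma~2.2 of Lipton--Avellaneda \cite{av} and writes the local corrector as the solution of a Stokes system on $\Ys_{r\setminus s}$ (with an auxiliary pressure $q$ and, in addition, the tangential trace $v\cdot\tau=u\cdot\tau$ preserved on $\partial\Ys_s$), whereas you bypass the Stokes machinery entirely and obtain the corrector by a trace lifting followed by the Bogovski\u{\i} operator. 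Your route is slightly more elementary---no pressure, no elliptic regularity, just the standard right inverse of $\div$ on a fixed Lipschitz domain---and it makes transparent why only the normal trace and the total flux through $\partial\Ys_s$ matter for \eqref{eqn_lemma_res_1}. The paper's Stokes formulation, on the other hand, plugs directly into the existing Lipton--Avellaneda estimates and gives a corrector that also matches tangential data, which is not needed for the lemma as stated but is closer to the classical Tartar-type restriction operators. The rescaling and assembly steps, including the $H^1$-conformity across cell faces via the ``leave $v$ unmodified near $\partial\Ys$'' device, are identical in both proofs.
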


\begin{proof}
	It is shown in Lemma 2.2 of \cite{av} that given $u\in H^1(\Ys)^N$ there exist solutions $v\in H^1(\Ys_{r\setminus s})^N$ and $q\in L^2(\Ys_{r\setminus s})/\RR$ of
	\begin{align*}
	&\Delta v = \Delta u - \nabla q ,\\
	&\div v = \div u + \frac{1}{|\Ys_{r\setminus s}|} \int_{\partial \Ys_s} u\cdot n \diff s ,\\
	&v = u \quad \text{on}\;\; \partial \Ys_r ,\\
	&v\cdot n = 0,\; v\cdot \tau = u\cdot \tau \quad \text{on}\;\; \partial \Ys_s .
	\end{align*}
	Here $n\in \RR^N$ is the outward normal unit vector of $\partial \Ys_s$ and $\tau\in \RR^N$ is any tangent vector of $\partial \Ys_s$. The region $\Ys_{r\setminus s}$ is defined in Section \ref{chapter_domain}. Note that we used the same region $\Ys_{r\setminus s}$ for the mean--value extension in Definition \ref{def_extension}. We define an operator $R: H^1(\Ys)^N \to H^1(\Ys_f)^N$ by
	\[
	Ru = \begin{cases}
	u & \text{in}\;\; \Ys_f\setminus \Ys_{r\setminus s} ,\\
	v & \text{in}\;\; \Ys_{r\setminus s} .
	\end{cases}
	\]
	To show \eqref{eqn_lemma_res_1} we need to do a calculation: Let us have $v\in H^1(\Ys)^N$ and $u\in L^2(\Ys_f)$. Let $\hat u$ be the extension of $u$ to $\Ys$ by its mean value in $\Ys_{r\setminus s}$. Then
	\begin{align}
	\begin{split}
	\label{eqn_udivRv}
	&\int_{\Ys_f} u \div Rv \diff x - \int_{\Ys} \hat u \div v \diff x
	= \int_{\Ys_{r\setminus s}}\left(u \div Rv-\hat u \div v\right) \diff x - \int_{\Ys_s} \hat u \div v \diff x\\
	&\qquad= \int_{\Ys_{r\setminus s}}\left(u \left(\div v + \frac{1}{|\Ys_{r\setminus s}|} \int_{\partial \Ys_s} v\cdot n \diff s\right) - u \div v\right) \diff x \\
	&\qquad\quad - \left(\frac{1}{|\Ys_{r\setminus s}|} \int_{\Ys_{r\setminus s}} u \diff x \right)\int_{\Ys_s} \div v \diff x\\
	&\qquad= \left(\frac{1}{|\Ys_{r\setminus s}|} \int_{\Ys_{r\setminus s}} u \diff x \right) \left(\int_{\partial \Ys_s} v\cdot n \diff s - \int_{\Ys_s} \div v\diff x \right) \\
	&\qquad= 0 .
	\end{split}
	\end{align}
	We define $R_\eps$ by applying a rescaled $R$ on every cell in the interior of $\Omega_\eps$. To be precise, let for $\eps>0$ and $k\in K_\eps$ the affine transformation $T_{k,\eps}: \Ys \to \eps(\Ys + k)$ be given through $T_{k,\eps} x = \eps (x+k)$ and set for $u\in H^1_0(\Omega)^N$
	\begin{align*}
	R_\eps u = \begin{cases}
	u & \text{in}\;\; \Omega_\eps\setminus \Omega_{K,\eps} ,\\
	R [u \circ T_{k,\eps}]\circ T_{k,\eps}^{-1} & \text{in}\;\; \eps(\Ys_f+k),\;\; k\in K_\eps .
	\end{cases}
	\end{align*}
	Then we easily get \eqref{eqn_lemma_res_2} analogously to \cite{av}, Lemma 2.1. Furthermore, because the rescaling of $R$ and of the mean value extension used in \eqref{eqn_udivRv} happens in the same way, \eqref{eqn_udivRv} implies \eqref{eqn_lemma_res_1}.
\end{proof}


The next lemma concerns the convolution defined in \eqref{abbrev_convolution}. A nice property of this convolution is that we get a convergence of the form $\nabla(\phi \ast_\eps f) \to \theta\nabla(\phi \ast_0 f)$.

\begin{lemma}
	\label{lemma_conv1}
	Let  $\phi$ be an interaction kernel satisfying \eqref{model_kernel}. Let $f \in L^1_T(L^1(\Omega))$.
	\begin{enumerate}
		\item \label{lemma_conv1_i3} The function $\phi \ast_\eps f$ is smooth in $\Omega$ with
		\begin{align*}
		\norm{\nabla(\phi \ast_\eps f)}_{L^\infty_T(L^\infty(\Omega))} \leq \norm{\nabla\phi}_{L^\infty}\norm{f}_{L^1_T(L^1(\Omega))} + \norm{\nabla \phi}_{L^1} \rho_s .
		\end{align*}
		
		\item \label{lemma_conv1_i4} We have for all $r\in [1, \infty)$
		\begin{align*}
		\nabla(\phi \ast_\eps f) \to \theta\nabla(\phi \ast_0 f) \quad \text{in} \quad L^r_T(L^r(\Omega)) .
		\end{align*}
		
	\end{enumerate}
\end{lemma}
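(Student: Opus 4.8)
The plan rests on the elementary observation that, since $\int_{\RR^N}\phi\diff x=1$ and $\phi\in C^\infty_C(\RR^N)$ (so $\int_{\RR^N}\nabla\phi\diff x=0$), for any measurable $X\subseteq\RR^N$ one may rewrite
\[
\nabla(\phi\ast_X f)(t,x)=\int_X\nabla\phi(x-y)\bigl(f(t,y)-\rho_s\bigr)\diff y ,
\]
so that $\nabla(\phi\ast_X f)(t,\cdot)$ is the $\RR^N$-convolution of the compactly supported kernel $\nabla\phi$ with the $L^1$-function $(f(t,\cdot)-\rho_s)\mathbf 1_X$. For assertion \ref{lemma_conv1_i3} I would justify differentiation under the integral sign by the smoothness and compact support of $\phi$, which simultaneously gives that $\phi\ast_\eps f$ is smooth in $\Omega$; splitting $\nabla(\phi\ast_\eps f)(t,x)=\int_{\Omega_\eps}\nabla\phi(x-y)f(t,y)\diff y+\rho_s\int_{\RR^N\setminus\Omega_\eps}\nabla\phi(x-y)\diff y$ and bounding the two pieces by $\norm{\nabla\phi}_{L^\infty}\norm{f(t,\cdot)}_{L^1}$ and $\rho_s\norm{\nabla\phi}_{L^1}$ respectively yields the stated estimate after taking the supremum in $x$ and then in $t$.

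For assertion \ref{lemma_conv1_i4} the starting point is the identity, obtained from the display above with $X=\Omega_\eps$ and $X=\Omega$,
\[
\nabla(\phi\ast_\eps f)(t,x)-\theta\,\nabla(\phi\ast_0 f)(t,x)
=\int_\Omega\bigl(\mathbf 1_{\Omega_\eps}(y)-\theta\bigr)\,\nabla\phi(x-y)\,\bigl(f(t,y)-\rho_s\bigr)\diff y .
\]
The homogenization input is the standard weak-$*$ convergence $\mathbf 1_{\Omega_\eps}\overset{\ast}\weakto\theta$ in $L^\infty(\Omega)$: on every interior cell $\eps(\Ys+k)$, $k\in K_\eps$, the mean of $\mathbf 1_{\Omega_\eps}$ is exactly $|\Ys_f|=\theta$ by the periodic construction of $\Omega_\eps$, while the cells meeting $\partial\Omega$ (those not in $K_\eps$) fill only a boundary layer of measure $O(\eps)$. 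For a.e.\ fixed $t$ we have $f(t,\cdot)\in L^1(\Omega)$, hence for every $x$ the map $y\mapsto\nabla\phi(x-y)(f(t,y)-\rho_s)$ lies in $L^1(\Omega)$; testing the weak-$*$ null sequence $\mathbf 1_{\Omega_\eps}-\theta$ against it shows that the right-hand side tends to $0$ pointwise in $(t,x)$.

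It then remains to promote this pointwise convergence to convergence in $L^r_T(L^r(\Omega))$, which I would do by dominated convergence: by assertion \ref{lemma_conv1_i3} the integrand above is bounded, uniformly in $x$, by $2\bigl(\norm{\nabla\phi}_{L^\infty}\norm{f(t,\cdot)}_{L^1}+\rho_s\norm{\nabla\phi}_{L^1}\bigr)$, and since $|\Omega|<\infty$ this dominates the integration in $x$ for each $t$; a subsequent integration in $t$ then closes the argument. I expect this last step to be the only delicate point: the $t$-dependent dominating function has to be controlled uniformly enough in $t$ to match the exponent $r$ --- in the applications of the lemma, $f$ is uniformly bounded in $L^\infty_T(L^1)$ by the a~priori estimates of Section~\ref{chapter_apriori}, which is precisely what makes the argument go through --- whereas the weak-$*$ convergence of $\mathbf 1_{\Omega_\eps}$ and the differentiation under the integral sign are entirely routine.
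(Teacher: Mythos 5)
Your proof is correct and follows essentially the same route as the paper: both reduce the difference $\nabla(\phi\ast_\eps f)-\theta\nabla(\phi\ast_0 f)$ to integrating an $L^1$ function against $\mathbf{1}_{\Omega_\eps}-\theta$ (the paper establishes this equidistribution by a cell-by-cell Lipschitz estimate followed by density of $C^\infty_C$ in $L^1$, which is exactly the weak-$*$ convergence $\mathbf{1}_{\Omega_\eps}\overset{\ast}\weakto\theta$ that you invoke), obtain pointwise convergence, and conclude by dominated convergence using the uniform bound of assertion 1. Your closing caveat --- that dominating the time integration for $r>1$ really needs $f$ bounded in $L^\infty_T(L^1)$ (or at least $L^r_T(L^1)$) rather than merely $f\in L^1_T(L^1)$ --- is well taken: the paper glosses over this by appealing to the constant bound of assertion 1, and in all applications $f$ is indeed uniformly bounded in $L^\infty_T(L^1)$ by the a priori estimates.
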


\begin{proof}
	The first assertion follows trivially from estimating
	\begin{align*}
	\nabla (\phi \ast_\eps f)(t,x) = \int_{\Omega_\eps}\nabla \phi(x-y)f(t,y)\diff y + \rho_s \int_{\RR^N\setminus \Omega_\eps}\nabla \phi(x-y)\diff y.
	\end{align*}
	
	For the second assertion let $\psi \in C^\infty_C(\RR^N)$. Then $\psi$ is Lipschitz continuous with some Lipschitz constant $L$. Splitting the domain $\Omega_\eps$ into the cells, we have
	\begin{align}
	\label{conv11}
	\int_{\Omega_\eps} \psi(y) \diff y
	&= \sum_{k\in K_\eps} \int_{\eps(\Ys_f + k)} \psi(y) \diff y +   \int_{\Omega\setminus\Omega_K} \psi(y) \diff y
	\end{align}
	and
	\begin{align}
	\label{conv12}
	\int_{\Omega} \psi(y) \diff y 
	&= \sum_{k\in K_\eps} \int_{\eps(\Ys + k)} \psi(y) \diff y +   \int_{\Omega\setminus\Omega_K} \psi(y) \diff y .
	\end{align}
	On a cell we can exchange values $\psi(y)$ by the value of $\psi$ at a single point $\eps k$ with an error bounded by $L\sqrt{N}\eps$, i.e.,
	\begin{align*}
	&\left| \int_{\eps(\Ys_f + k)} \psi(y) \diff y - \theta \int_{\eps(\Ys + k)} \psi(y) \diff y\right| \\
	&\qquad\leq \left| \int_{\eps(\Ys_f + k)} \psi(\eps k) \diff y - \theta \int_{\eps(\Ys + k)} \psi(\eps k) \diff y\right| \\
	&\qquad\quad+\int_{\eps(\Ys_f + k)} L\sqrt{N}\eps \diff y + \theta \int_{\eps(\Ys + k)} L\sqrt{N}\eps \diff y \\
	&\qquad= \eps^N |\Ys_f|L\sqrt{N}\eps + \theta \eps^N |\Ys|L\sqrt{N}\eps \\
	&\qquad= 2 \theta L\sqrt{N}\eps^{N+1} .
	\end{align*}
	Now we take $\eqref{conv11}-\theta \cdot \eqref{conv12}$ and apply this estimate on every cell $\eps(\Ys+k)$, $k\in K_\eps$:
	\begin{align}
	\begin{split}
	\label{conv13}
	\left|\int_{\Omega_\eps} \psi(y) \diff y - \theta \int_{\Omega} \psi(y) \diff y\right| 
	&\leq \sum_{k\in K_\eps} 2 \theta L\sqrt{N}\eps^{N+1} +   (1-\theta)\int_{\Omega\setminus\Omega_K} \psi(y) \diff y \\
	&\leq 2 |\Omega|\theta L\sqrt{N}\eps +   (1-\theta)\big|\Omega\setminus\Omega_K\big| \norm{\psi}_{L^\infty(\Omega)} .
	\end{split}
	\end{align}
	As $\Omega$ has a smooth boundary, $\Omega_K$ is a good approximation to $\Omega$ in the sense that $|\Omega\setminus\Omega_K\big| \to 0$ as $\eps \to 0$.

	Next we will show pointwise convergence of $\nabla(\phi \ast_\eps f)$. For this fix some $(t,x)\in (0,T)\times \Omega$ and define $g\in L^1(\Omega)$ by $g(y) = \nabla \phi(x-y)(f(t,y)-\rho_s)$. This is motivated by
	\begin{align}
	\begin{split}
	\label{conv14}
	\nabla(\phi \ast_\eps f)(t,x) &= \nabla\left(\int_{\Omega_\eps} \nabla\phi(x-y)f(t,y) \diff y+ \int_{\RR^N \setminus \Omega_\eps}\phi(x-y)\rho_s\diff y\right) \\
	&=\int_{\Omega_\eps} \nabla\phi(x-y)(f(t,y)-\rho_s)\diff y = \int_{\Omega_\eps} g(y)\diff y
	\end{split}
	\end{align}
	Fix $\delta > 0$ and choose a $\psi \in C^\infty_C(\Omega)$ with $\norm{g-\psi}_{L^1(\Omega)} \leq \delta$. Now we can use \eqref{conv13} to conclude that for $\eps$ small enough
	\begin{align*}
	\left|\int_{\Omega_\eps} g \diff y- \theta \int_{\Omega} g \diff y\right|
	\leq 2 \delta +\left|\int_{\Omega_\eps} \psi \diff y- \theta \int_{\Omega} \psi \diff y\right| \leq 3 \delta .
	\end{align*}
	Because $\delta>0$ was arbitrary we conclude $\int_{\Omega_\eps} g \diff y \to \theta \int_{\Omega} g \diff y$ as $\eps \to 0$. With the identity \eqref{conv14} this means pointwise convergence $\nabla(\phi \ast_\eps f) \to \theta\nabla (\phi \ast_0 f)$.
	
	Using the $L^\infty$-bounds from the first assertion we can apply the dominated convergence theorem for $L^p$-functions, we have $\nabla(\phi \ast_\eps f) \to \theta\nabla (\phi \ast_0 f)$ in $L^r_T(L^r(\Omega))$ for all $1\leq r < \infty$.
\end{proof}


The requirements in Definition \ref{def_pressure} give rise to the following property:
\begin{lemma}
	\label{lemma_pressure}
	Let $P(r) = p(r) + \frac{\gamma}{2} r^2$ be an admissible  generalized pressure function with constants $\alpha>0$, $\beta \geq 2$, $c>0$ as in Definition \ref{def_pressure}. Let the energy function $W$ be given through \eqref{model_W}. In the case $\beta > 2$ we have
	\begin{align*}
	\lim_{r\to\infty} \frac{W(r)}{P(r)} = \frac{1}{\beta-1}.
	\end{align*}
\end{lemma}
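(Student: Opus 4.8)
The plan is to reduce the statement to the growth condition in requirement \ref{def_pressure_i5} through the primitive relation between $W$ and $p$. First I would use that \eqref{model_W} together with $P(0)=0$ (so that $p(0)=0$) and the normalization $W(0)=0$ yields the identity $p(\rho)=\rho W'(\rho)-W(\rho)$ already exploited in Section \ref{chapter_apriori}. Dividing by $\rho^{2}$ this reads $\frac{d}{d\rho}\big(W(\rho)/\rho\big)=p(\rho)/\rho^{2}$ for $\rho>0$; integrating from a fixed $\rho_{0}>0$ and substituting $p=P-\frac{\gamma}{2}\rho^{2}$ from \eqref{model_P} gives
\[
\frac{W(\rho)}{\rho}=\frac{W(\rho_{0})}{\rho_{0}}-\frac{\gamma}{2}(\rho-\rho_{0})+\int_{\rho_{0}}^{\rho}\frac{P(s)}{s^{2}}\diff s .
\]

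Next I would analyse the integral for $\rho\to\infty$. Writing $P(s)/s^{2}=\big(P(s)/s^{\beta}\big)\,s^{\beta-2}$ and invoking requirement \ref{def_pressure_i5}, which in the present case $\beta>2$ gives $P(s)/s^{\beta}\to c/\beta$ with $c>0$, the weight $s^{\beta-2}$ has divergent integral at infinity precisely because $\beta>2$. A Ces\`aro--Stolz (l'H\^opital) argument then shows that the ratio of $\int_{\rho_{0}}^{\rho}\frac{P(s)}{s^{2}}\diff s$ to $\rho^{\beta-1}/(\beta-1)$ tends to $c/\beta$, so that this integral is asymptotic to $\frac{c}{\beta(\beta-1)}\rho^{\beta-1}$. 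Since $\beta-1>1$, the constant term $W(\rho_{0})/\rho_{0}$ and the affine term $\frac{\gamma}{2}(\rho-\rho_{0})$ in the displayed identity are $o(\rho^{\beta-1})$, whence $W(\rho)/\rho\sim\frac{c}{\beta(\beta-1)}\rho^{\beta-1}$, that is $W(\rho)\sim\frac{c}{\beta(\beta-1)}\rho^{\beta}$ as $\rho\to\infty$.

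Finally I would divide by $P(\rho)$. Requirement \ref{def_pressure_i5} gives $P(\rho)\sim\frac{c}{\beta}\rho^{\beta}$, hence
\[
\frac{W(\rho)}{P(\rho)}\longrightarrow\frac{c/(\beta(\beta-1))}{c/\beta}=\frac{1}{\beta-1},
\]
which is the assertion. The only delicate point — and the main obstacle such as it is — is the asymptotic evaluation of $\int_{\rho_{0}}^{\rho}P(s)/s^{2}\diff s$: one must justify the l'H\^opital step (equivalently, the elementary fact that $h(s)\to\ell$ implies $\int^{\rho}h(s)s^{\beta-2}\diff s\sim\ell\,\rho^{\beta-1}/(\beta-1)$ for $\beta>2$) and then check that the lower-order affine contributions genuinely cancel in the ratio $W/P$; this is exactly where $\beta>2$, together with $c>0$ ensuring $P(\rho)\to\infty$, is used.
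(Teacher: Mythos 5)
Your argument is correct. It rests on the same two pillars as the paper's proof --- the relation \eqref{model_W} between $W$ and $p$, and requirement \ref{def_pressure_i5} of Definition \ref{def_pressure} --- but it is organized in the opposite direction. The paper stays at the level of derivatives: from $P'(r)\sim c\,r^{\beta-1}$ it strips off the contribution $\gamma r$ of the quadratic term (negligible because $\beta>2$) to get $p'(r)\sim c\,r^{\beta-1}$, reads off $W''(r)\sim c\,r^{\beta-2}$ from $p'=rW''$, and applies l'H\^opital twice to obtain $W(r)/r^{\beta}\to c/(\beta(\beta-1))$; dividing by $P(r)/r^{\beta}\to c/\beta$ finishes. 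You instead integrate: you pass to the primitive identity $p=\rho W'-\rho\mapsto W$, i.e.\ $p(\rho)=\rho W'(\rho)-W(\rho)$ (which the paper also uses, in the a priori estimates), rewrite it as $\tfrac{d}{d\rho}(W(\rho)/\rho)=p(\rho)/\rho^{2}$, and evaluate $\int_{\rho_0}^{\rho}P(s)/s^{2}\,ds$ asymptotically with a single l'H\^opital step, discarding the affine term coming from $\tfrac{\gamma}{2}s^{2}$ because $\beta-1>1$. The two routes use $\beta>2$ at exactly the corresponding place and are of equal difficulty; the only extra care your version needs is the normalization $p(0)=0$, $W(0)=0$ making the primitive identity hold without an additive constant, which you correctly flag, whereas the paper's derivative-level argument needs no such normalization.
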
 

\begin{proof}
	Recall that by requirement \ref{def_pressure_i5} of Definition \ref{def_pressure}
	\begin{align*}
	\lim_{r\to\infty} \frac{P'(r)}{r^{\beta-1}} = c.
	\end{align*}
	In the case $\beta > 2$ this behaviour can not originate from the term $\frac{\gamma}{2} r^2$, we can conclude
	\begin{align*}
	\lim_{r\to\infty} \frac{p'(r)}{r^{\beta-1}} = c.
	\end{align*}
	Recall the definition of the energy function $W$, that is $p'(r) = rW''(r)$. Using the L'Hospital rule twice we finally get
	\begin{align*}
	\lim_{r\to\infty} \frac{W(r)}{r^{\beta}} = \frac{c}{\beta(\beta-1)}.
	\end{align*}
	We conclude with requirement \ref{def_pressure_i5} of Definition \ref{def_pressure}
	\begin{align*}
	\lim_{r\to\infty} \frac{W(r)}{P(r)} 
	=  \left(\lim_{r\to\infty} \frac{W(r)}{r^{\beta}} \right)\left(\lim_{r\to\infty} \frac{P(r)}{r^{\beta}} \right)^{-1} = \frac{c}{\beta(\beta-1)} \frac{\beta}{c} = \frac{1}{\beta-1} .
	\end{align*}
\end{proof}


The next lemma generalizes the following observation about weak limits: If we have $\phi_n \weakto \phi$ in $L^2$ and $\phi_n^2 \weakto \psi$ in $L^1$ then $\psi \geq \phi^2$. We need this lemma to compare weak limits in Section \ref{chapter_sc}.

\begin{lemma}
	\label{lemma_weak_leq}
	Let $D$ be a bounded domain in $\RR^N$. Let $f: \RR\to\RR$ be a convex function with 
	\begin{align*}
	f\geq 0,\quad f(x) = f(-x), \quad \lim_{x\to\infty} \frac{f'(x)}{x^{b-1}} = c\in[0,\infty)
	\end{align*}  
	for some constant $b \geq 1$. Fix some constant $a$ with 
	\begin{align*}
	2\leq a < \infty\quad\text{and}\quad 2(b-1) \leq a.
	\end{align*}
	Let $\phi_n \in L^a(D)$ be a sequence with weak $L^a$-limit $\phi \in L^a(D)$. Furthermore let $f(\phi_n)$ have the weak $L^1$-limit $\psi\in L^1(D)$. Then $\psi \geq f(\phi)$.
\end{lemma}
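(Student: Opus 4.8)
The plan is to read the statement as an instance of weak lower semicontinuity of a convex integral functional, and to prove it using only the convexity of $f$ together with the elementary fact that a finite convex function on $\RR$ is the pointwise supremum of a \emph{countable} family of affine minorants. Note that the evenness of $f$ and the growth hypothesis $\lim_{x\to\infty} f'(x)/x^{b-1}=c$ with $2(b-1)\le a$ will not be used in the argument below; they only guarantee the integrability $f(\phi_n)\in L^1(D)$ and the companion bounds needed in the applications in Section~\ref{chapter_sc}.

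First I would set up the countable family of affine minorants. For each $x_0\in\mathbb{Q}$ choose a supporting line $\ell_{x_0}(x)=\alpha_{x_0}x+\beta_{x_0}$ of $f$ at $x_0$, i.e.\ with slope $\alpha_{x_0}\in[f'_-(x_0),f'_+(x_0)]$; convexity of $f$ gives $\ell_{x_0}\le f$ on all of $\RR$ and $\ell_{x_0}(x_0)=f(x_0)$. I claim $\sup_{x_0\in\mathbb{Q}}\ell_{x_0}(x)=f(x)$ for every $x\in\RR$. The inequality ``$\le$'' is immediate. For ``$\ge$'', let $x_0\to x$ through rationals: $f(x_0)\to f(x)$ since a finite convex function is continuous, while the slopes $\alpha_{x_0}$ stay bounded on any bounded interval (the one-sided derivatives are non-decreasing, hence locally bounded), so $\alpha_{x_0}(x-x_0)\to 0$ and therefore $\ell_{x_0}(x)=f(x_0)+\alpha_{x_0}(x-x_0)\to f(x)$. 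Enumerate the family as $\{\ell_j\}_{j\in\NN}$ with $\ell_j(x)=\alpha_j x+\beta_j$.

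Next I would pass the pointwise inequality $f(\phi_n)\ge\ell_j(\phi_n)$ to the weak limit on arbitrary measurable sets. Fix $j\in\NN$ and a measurable $E\subseteq D$. Since $D$ is bounded, $\phi_n\in L^a(D)\subseteq L^1(D)$, hence $\ell_j(\phi_n)\in L^1(D)$; together with $f(\phi_n)\in L^1(D)$ this yields
\[
\int_E f(\phi_n)\diff x \;\ge\; \int_E \ell_j(\phi_n)\diff x \;=\; \alpha_j\int_E \phi_n\diff x+\beta_j|E|.
\]
Letting $n\to\infty$: pairing the weak $L^1$-convergence $f(\phi_n)\weakto\psi$ with $\mathbf{1}_E\in L^\infty(D)$ gives $\int_E f(\phi_n)\to\int_E\psi$, and pairing the weak $L^a$-convergence $\phi_n\weakto\phi$ with $\mathbf{1}_E\in L^{a/(a-1)}(D)$ (a finite exponent since $a\ge 2$, and $\mathbf{1}_E\in L^{a/(a-1)}(D)$ because $D$ is bounded) gives $\int_E\phi_n\to\int_E\phi$. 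Hence $\int_E\psi\diff x\ge\int_E\ell_j(\phi)\diff x$ for every measurable $E\subseteq D$. Since $\psi-\ell_j(\phi)\in L^1(D)$ has nonnegative integral over every measurable subset, $\psi\ge\ell_j(\phi)$ a.e.\ in $D$; let $N_j\subset D$ be the corresponding null set. On the full-measure set $D\setminus\bigcup_{j}N_j$ we then get $\psi(x)\ge\sup_j\ell_j(\phi(x))=f(\phi(x))$ by the first step, which is the assertion.

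The only genuinely delicate point is the first step — reducing to a \emph{countable} family of affine minorants with supremum exactly $f$; a single supporting line per point produces an uncountable family, and one must verify that rational base points already suffice, which is where local boundedness of the one-sided derivatives enters. Everything else is routine bookkeeping. As an alternative to the first two steps one may instead invoke Mazur's lemma: choose convex combinations $\tilde\phi_m=\sum_{n\ge m}\lambda_{m,n}\phi_n$ with $\tilde\phi_m\to\phi$ strongly in $L^a(D)$, hence (along a subsequence) a.e.; convexity gives $f(\tilde\phi_m)\le\sum_{n\ge m}\lambda_{m,n}f(\phi_n)$, whose right-hand side still converges weakly in $L^1(D)$ to $\psi$, so that, using $f\ge 0$, Fatou's lemma yields $\int_E f(\phi)\le\liminf_m\int_E f(\tilde\phi_m)\le\int_E\psi$ for every measurable $E\subseteq D$, and therefore $f(\phi)\le\psi$ a.e.
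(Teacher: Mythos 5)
Your argument is correct, and it takes a genuinely different route from the paper. The paper argues by contradiction: assuming $\psi<f(\phi)$ on a set $M$ of positive measure, it shows that the map $\eta\mapsto\norm{f(\eta)}_{L^1(M)}$ is \emph{strongly continuous} and convex on $L^a(D)$ --- this is precisely where the growth hypothesis $\lim_{x\to\infty}f'(x)/x^{b-1}=c$ and the condition $2(b-1)\le a$ are used, via the bound $|f'(x)|\le C_1+C_2|x|^{b-1}$ --- and then invokes the standard fact that continuous convex functionals are weakly lower semicontinuous to reach a contradiction with $\int_M f(\phi_n)\to\int_M\psi<\int_M f(\phi)$. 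You instead prove the pointwise inequality directly, testing $f(\phi_n)\ge\ell_j(\phi_n)$ against indicators of measurable sets for a countable family of supporting lines $\{\ell_j\}$ with $\sup_j\ell_j=f$, and taking the supremum over $j$ outside a common null set (your Mazur-plus-Fatou variant is an equally valid packaging of the same idea). What your approach buys: it dispenses entirely with the growth and symmetry hypotheses on $f$ (which, as you note, are only needed to guarantee $f(\phi_n)\in L^1$ and for the companion estimates in Section \ref{chapter_sc}), it avoids the contradiction setup, and it is self-contained rather than citing weak lower semicontinuity of convex functionals as a black box --- the one point requiring care, which you handle correctly, is that rational base points suffice because the one-sided derivatives of a convex function are locally bounded. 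What the paper's approach buys is brevity given that black box, and the Lipschitz-type estimate it establishes for $\eta\mapsto\int f(\eta)$ is reused almost verbatim in the proof of assertion \ref{lemma_sc_i4} of Lemma \ref{lemma_sc}.
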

\begin{proof}
	We have $a \geq b$: If $b\leq 2$ this is obvious, otherwise $a \geq 2(b-1) \geq b$.
	
	As $\frac{f'(x)}{x^{b-1}} \to c$ we can find two constants $C_1$, $C_2$ such that $|f'(x)| \leq C_1 + C_2 |x|^{b-1}$. By  L'Hospital we have $\frac{f(x)}{x^{b}} \to c/b$. With the same argument as before we find two constants $C_3$, $C_4$ such that $f(x) \leq C_3 + C_4 |x|^b$. Because $D$ is a bounded domain, this implies $f(\phi_n), f(\phi) \in L^{a / b}(D) \subset L^1(D)$.
	
	Now assume $\psi < f(\phi)$ on some set $M\subset D$ with $|M|>0$. We will show that the functional $g: L^a(D)\to \RR,\; \eta \mapsto \norm{f(\eta)}_{L^1(M)}$ is continuous and convex. For this we calculate
	\begin{align*}
	&|g(\eta_1)-g(\eta_2)| = \left|\int_M f(\eta_1) - f(\eta_2) \diff x \right| \\
	&\qquad\leq \int_M  \left| \eta_1(x) - \eta_2(x)\right| \max_{y\in [\eta_1(x),\eta_2(x)]} |f'(y)| \diff x \\
	&\qquad\leq \int_M  \left| \eta_1(x) - \eta_2(x)\right| \left(C_1 + C_2 |\eta_1(x)|^{b-1} + C_2|\eta_2(x)|^{b-1}\right) \diff x \\
	&\qquad\leq \norm{\eta_1 - \eta_2}_{L^2(M)} C \left(1 + \norm{|\eta_1|^{b-1}}_{L^2(M)} +\norm{|\eta_2|^{b-1}}_{L^2(M)}\right) .
	\end{align*}
	For fixed $\eta_1$ and $\eta_2$ approaching $\eta_1$ in $L^a(D)$ the first term converges to zero (as $a \geq 2$) and the second term is bounded (as $a \geq 2(b-1)$).  We have shown the continuity of $g$. Furthermore for $\eta_1,\eta_2 \in L^a(D)$, $\lambda \in [0,1]$ we have
	\begin{align*}
	g(\lambda \eta_1 + (1-\lambda) \eta_2) &= \norm{f(\lambda \eta_1 + (1-\lambda) \eta_2)}_{L^1(M)} \\ 
	&\leq \norm{\lambda f(\eta_1) + (1-\lambda) f(\eta_2)}_{L^1(M)} \\
	&\leq \lambda \norm{f(\eta_1)}_{L^1(M)} + (1-\lambda) \norm{f(\eta_2)}_{L^1(M)} \\
	&= \lambda g(\eta_1) + (1-\lambda) g(\eta_2) .
	\end{align*}
	As $g$ is continuous and convex, it is weakly lower semicontinuous, and therefore $g(\phi)\leq \liminf_{n\to\infty} g(\phi_n) $. But this means
	\begin{align*}
	\norm{f(\phi)}_{L^1(M)} > \norm{\psi}_{L^1(M)} = \lim_{n\to\infty} \norm{f(\phi_n)}_{L^1(M)} \geq \norm{f(\phi)}_{L^1(M)} .
	\end{align*}
	This is a contradiction to our assumption $\psi < f(\phi)$, so we have $\psi \geq f(\phi)$.
\end{proof}

We want to estimate $\norm{f-\chi_\eta \ast f}_{L^2(D)}$ where $\chi_\eta$ is a mollifier function. It is well known that for $f\in L^2(D)$ this norm converges to zero. But we are interested in a bound that converges to zero with a certain rate. This is possible as long as we have higher regularity:
\begin{lemma}
	\label{lemma_mol}
	Let $f\in H^1(D)$ for some domain $D\subset \RR^N$. Let $\chi$ be a standard mollifier function, that is $\chi \in C^\infty_C(B(1,0))$, $\chi \geq 0$, $\int_{\RR^N} \chi = 1$, and for $\eta>0$ let $\chi_\eta(x) = \eta^{-N}\chi(x/\eta)$. Then there exists a constant $C>0$ such that
	\begin{align*}
	\norm{f-\chi_\eta \ast f}_{L^2(D_\eta)} \leq C \eta \norm{f}_{H^1(D)} ,
	\end{align*}
	where $D_\eta = \set{x\in D: \dist(x,\partial D) \geq \eta}$.
\end{lemma}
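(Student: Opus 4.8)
The plan is to reduce the estimate to the quantitative continuity of translations in $L^2$, with the rate supplied by the gradient of $f$. Since $\int_{\RR^N}\chi = 1$, the substitution $y=\eta z$ gives for every $x\in D_\eta$
\begin{align*}
(\chi_\eta \ast f)(x) - f(x) = \int_{\supp\chi} \chi(z)\big(f(x-\eta z) - f(x)\big)\diff z ,
\end{align*}
and the key geometric observation is that for $x\in D_\eta$, $z\in\supp\chi\subset B(0,1)$ and $t\in[0,1]$ the point $x-t\eta z$ lies in $D$, because $\dist(x,\partial D)\geq\eta > t\eta|z|$. Thus every evaluation of $f$ below is legitimate.

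First I would establish the bound for $f\in C^\infty(D)\cap H^1(D)$. Writing $f(x-\eta z) - f(x) = -\eta\int_0^1 z\cdot\nabla f(x-t\eta z)\diff t$ and using $|z|\leq 1$ on $\supp\chi$, one obtains the pointwise estimate
\begin{align*}
\big|(\chi_\eta\ast f)(x) - f(x)\big| \leq \eta \int_{\supp\chi}\chi(z)\int_0^1 \big|\nabla f(x-t\eta z)\big|\diff t\diff z .
\end{align*}
Taking the $L^2(D_\eta)$-norm and pulling it inside the $z$- and $t$-integrals via Minkowski's integral inequality, together with the fact that $x\mapsto x-t\eta z$ maps $D_\eta$ into $D$ (so that $\norm{\nabla f(\cdot - t\eta z)}_{L^2(D_\eta)}\leq\norm{\nabla f}_{L^2(D)}$), yields
\begin{align*}
\norm{f-\chi_\eta\ast f}_{L^2(D_\eta)} \leq \eta\,\norm{\nabla f}_{L^2(D)}\int_{\supp\chi}\chi(z)\diff z = \eta\,\norm{\nabla f}_{L^2(D)} ,
\end{align*}
which is the claim with $C=1$.

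Finally I would pass to general $f\in H^1(D)$ by density. By the Meyers--Serrin theorem $C^\infty(D)\cap H^1(D)$ is dense in $H^1(D)$ without any regularity assumption on $\partial D$, so it suffices to observe that both sides of the asserted inequality depend continuously on $f$ in the $H^1(D)$-topology; for the left-hand side this follows from $\norm{\chi_\eta\ast f}_{L^2(D_\eta)}\leq\norm{\chi_\eta}_{L^1}\norm{f}_{L^2(D)}=\norm{f}_{L^2(D)}$ (Young's convolution inequality, again using that the convolution restricted to $D_\eta$ only sees values of $f$ in $D$), and the inequality then persists in the limit. The only genuine point requiring care in the whole argument is the bookkeeping of domains---precisely the reason the shrunken set $D_\eta$ appears in the statement---rather than any real analytic difficulty.
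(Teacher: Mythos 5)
Your proof is correct and rests on the same idea as the paper's: reducing the estimate to the quantitative bound $\norm{f(\cdot)-f(\cdot-y)}_{L^2(D_\eta)}\leq C\,|y|\,\norm{f}_{H^1(D)}$. The only difference is presentational — the paper applies Jensen's inequality and then cites the difference-quotient lemma from Evans as a black box, whereas you prove that estimate inline via the fundamental theorem of calculus, Minkowski's integral inequality, and a Meyers--Serrin density argument, which is exactly how the cited lemma is proved.
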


\begin{proof}
	We calculate with Jensen's inequality
	\begin{align*}
	\norm{f-\chi_\eta \ast f}^2_{L^2(D_\eta)} &= \int_{D_\eta}\left(\int_{B(\eta,0)} \chi_\eta(y) (f(x)-f(x-y)) \diff y \right)^2 \diff x \\
	&\leq \int_{D_\eta}\int_{B(\eta,0)} \chi_\eta(y) (f(x)-f(x-y))^2 \diff y \diff x .
	\end{align*}
	Using the notation
	\begin{align*}
	D^y f(x) := \frac{f(x)-f(x-y)}{|y|}
	\end{align*}
	we can write
	\begin{align*}
	\norm{f-\chi_\eta \ast f}^2_{L^2(D_\eta)} &\leq \int_{D_\eta}\int_{B(\eta,0)} \chi_\eta(y) |y|^2 (D^y f(x))^2 \diff y \diff x \\
	&= \int_{B(\eta,0)} \chi_\eta(y) |y|^2 \norm{D^y f}_{L^2(D_\eta)}^2 \diff y .
	\end{align*}
	By \cite{evans}, Chapter 5, Theorem 3, there exists a $C>0$ such that for all $y\in B(\eta,0)$
	\begin{align*}
	\norm{D^y f}_{L^2(D_\eta)} \leq C \norm{f}_{H^1(D)} .
	\end{align*}
	We conclude
	\begin{align*}
	\norm{f-\chi_\eta \ast f}^2_{L^2(D_\eta)} &\leq \int_{B(\eta,0)} \chi_\eta(y) |y|^2 C^2 \norm{f}_{H^1(D)}^2 \diff y \\
	&\leq  \eta^2 C^2 \norm{f}_{H^1(D)}^2 \int_{B(\eta,0)} \chi_\eta(y) \diff y .
	\end{align*}
\end{proof}


\bibliographystyle{siamplain}
\bibliography{references}
\end{document}